\newcounter{cislo} \numberwithin{cislo}{section}
\numberwithin{equation}{subsection}
\newtheorem{theorem}[cislo]{Theorem}
\newtheorem{lemma}[cislo]{Lemma}
\newtheorem{proposition}[cislo]{Proposition}
\newtheorem{corollary}[cislo]{Corollary}
\newtheorem{claim}[cislo]{Claim}
\newtheorem{observation}[cislo]{Observation}
\newtheorem{conjecture}[cislo]{Conjecture}
\theoremstyle{definition}
\newtheorem{definition}[cislo]{Definition}
\newtheorem{notation}[cislo]{Notation}
\newtheorem{remark}[cislo]{Remark}
\newtheorem{note}[cislo]{Note}
\newtheorem{example}[cislo]{Example}
\newtheorem{discussion}[cislo]{Discussion}
\newtheorem{question}[cislo]{Question}
\newtheorem{construction}[cislo]{Construction}
\newtheorem{setup}[cislo]{Setup}
\newtheorem{recollection}[cislo]{Recollection}
\theoremstyle{remark}
\newcommand{\llp}{(\!(}
\newcommand{\rrp}{)\!)}
\newcommand{\llb}{\llbracket}
\newcommand{\rrb}{\rrbracket}
\newcommand{\lL}{\mathbf{L}}
\DeclareMathOperator{\Sch}{Sch}
\DeclareMathOperator{\Coh}{Coh}
\DeclareMathOperator{\Hom}{Hom}
\DeclareMathOperator{\RG}{R\Gamma}
\DeclareMathOperator{\Rep}{Rep}
\DeclareMathOperator{\End}{End}
\DeclareMathOperator{\Mat}{Mat}
\DeclareMathOperator{\mo}{mod}
\DeclareMathOperator{\Alg}{Alg}
\let\lim\relax
\DeclareMathOperator*{\lim}{lim}
\DeclareMathOperator*{\colim}{colim}
\DeclareMathOperator{\coker}{coker}
\DeclareMathOperator{\cone}{cone}
\DeclareMathOperator{\Ind}{Ind}
\DeclareMathOperator{\Maps}{Maps}
\DeclareMathOperator{\Qcoh}{QCoh}
\DeclareMathOperator{\Vect}{Vect}
\newcommand{\cl}{\mathrm{cl}}
\DeclareMathOperator{\dL}{\mathcal{L}}
\DeclareMathOperator{\W}{\mathbb{W}}
\DeclareMathOperator{\id}{id}
\DeclareMathOperator{\Spec}{Spec}
\DeclareMathOperator{\red}{red}
\DeclareMathOperator{\chara}{char}
\DeclareMathOperator{\A}{\mathbb{A}}
\let\P\relax
\DeclareMathOperator{\P}{\mathbb{P}}
\DeclareMathOperator{\Gr}{\mathbf{Gr}}
\DeclareMathOperator{\VB}{Vect}
\newcommand{\s}{\mathfrak{s}}
\newcommand{\D}{D}
\DeclareMathOperator{\Eig}{Eig}
\DeclareMathOperator{\Ad}{Ad}
\DeclareMathOperator{\awn}{awn}
\newcommand{\cyc}{\mathrm{cyc}}
\DeclareMathOperator{\fp}{fp}
\newcommand{\ev}{\mathrm{ev}}
\newcommand{\coev}{\mathrm{coev}}
\DeclareMathOperator{\Cat}{Cat}
\newcommand{\ex}{\mathrm{perf}}
\DeclareMathOperator{\tr}{tr}
\DeclareMathOperator{\reg}{reg}
\DeclareMathOperator{\comp}{\varkappa}
\newcommand{\cat}{\text{cat}}
\newcommand{\geom}{\text{geom}}
\DeclareMathOperator{\Flag}{Flag}
\DeclareMathOperator{\Grass}{Grass}
\DeclareMathOperator{\Fix}{Fix}
\newcommand{\Gm}{\mathbb{G}_m}
\newcommand{\GL}{GL}
\DeclareMathOperator{\X}{\mathbf{X}}
\newcommand{\lo}{\mathrm{L}}
\newcommand{\lop}{\mathrm{L^+}}
\newcommand{\fl}{\mathrm{fl}}
\let\O\relax
\DeclareMathOperator{\O}{\mathscr{O}}
\DeclareMathOperator{\eL}{\mathscr{L}}
\DeclareMathOperator{\eE}{\mathscr{E}}
\DeclareMathOperator{\eF}{\mathscr{F}}
\DeclareMathOperator{\eG}{\mathscr{G}}
\DeclareMathOperator{\eP}{\mathscr{P}}
\DeclareMathOperator{\eH}{\mathscr{H}}
\DeclareMathOperator{\eX}{\mathscr{eX}}
\newcommand{\E}{E}
\DeclareMathOperator{\Pol}{Pol}
\DeclareMathOperator{\F}{\mathbb{F}}
\DeclareMathOperator{\Z}{\mathbb{Z}}
\DeclareMathOperator{\N}{\mathbb{N}}
\DeclareMathOperator{\C}{\mathbb{C}}
\DeclareMathOperator{\pr}{pr}
\newcommand{\pfp}{\mathrm{pfp}}
\DeclareMathOperator{\rank}{rank}
\DeclareMathOperator{\pt}{pt}
\newcommand{\perf}{\mathrm{perf}}
\newcommand{\qcqs}{\mathrm{qcqs}}
\DeclareMathOperator{\Perf}{Perf}
\DeclareMathOperator{\Tor}{Tor}
\newcommand{\Tfrac}[2]{%
  \ooalign{%
    $\genfrac{}{}{1.2pt}1{#1}{#2}$\cr%
    $\color{white}\genfrac{}{}{.8pt}1{\phantom{#1}}{\phantom{#2}}$}%
}
\title{Equivariant $K$-theory, affine Grassmannian and perfection}
\author{Jakub Löwit}
\date{}
\begin{document}

\maketitle
\begin{abstract}
We study torus-equivariant algebraic $K$-theory of affine Schubert varieties in the perfect affine Grassmannians over $\F_p$. We further compare it to the torus-equivariant Hochschild homology of perfect complexes, which has a geometric description in terms of global functions on certain fixed-point schemes. We prove that $\F_p$-linearly, this comparison is an isomorphism. Our approach is quite constructive, resulting in new computations of these $K$-theory rings. We establish various structural results for equivariant perfect algebraic $K$-theory on the way; we believe these are of independent interest.
\end{abstract}

\tableofcontents

\section{Introduction}

\subsection{Motivation and context}

\paragraph{Affine Grassmannians and multiplicative Higgs moduli stacks.}
Motivated by the geometric Langlands program, \cite{Hau21, Hau24} formulated a conjectural picture for mirror symmetry between Higgs moduli spaces for Langlands dual groups. The supports of his mirrors are modelled by families of certain affine Springer fibers. The global functions on these families are conjecturally given by the equivariant cohomology of affine Schubert varieties, the case of partial flag varieties being worked out in \cite{HR23}.

This comparison conceptually clarifies if we replace equivariant cohomology with equivariant algebraic $K$-theory, which relates to functions on families of multiplicative affine Springer fibers, and consequently to multiplicative Higgs moduli spaces \cite{Wan24} by Beauville--Laszlo gluing.

The aim of this paper is to study these $K$-groups and their relationship to the rings of functions on multiplicative affine Springer fibers independently in the setup of perfect algebraic geometry. In particular, we prove the expected isomorphism in this setting -- see Theorem \ref{introtheorem: trace map for affine grassmannian}.

\paragraph{Equivariant algebraic $K$-theory and trace maps.}
Given a group scheme $G$ acting on a qcqs scheme $X$, we have the equivariant algebraic $K$-theory spectrum $K^G(X)$ of \cite{Tho88, TT90} built from the category of $G$-equivariant perfect complexes on $X$. Its zeroth homotopy group $K^G_0(X)$ is a commutative ring via tensor product. 

Starting from the same datum, we can record the fixed points of each $g \in G$ on $X$ and glue them to a family over $G$ denoted $\Fix_G(X)$, see \S \ref{section: fixed-point schemes of group actions} for details. Doing this for the loop group action on the affine Grassmannian returns certain families of multiplicative affine Springer fibers \cite[\S 2.7.3]{Yun15}. 

There is a natural comparison map from equivariant algebraic $K$-theory to $G$-invariant global functions on $\Fix_G(X)$, see \S \ref{section: trace maps from equivariant K-theory}. This is an isomorphism in interesting cases, allowing us to describe such rings by $K^G_0(X)$. Reinterpreting $\Fix_G(X)$ in terms of equivariant Hochschild homology, this is precisely the Dennis trace map \cite{McC94, KM00, DGM12, BGT13, BFN10, Toe14, HSS17, AGR17}, which was put to great use in algebraic $K$-theory and arithmetic geometry over last fifty years -- however, its use for equivariant algebraic $K$-theory of projective varieties has not been explored yet.

For singular schemes, algebraic $K$-theory is hard to compute. In geometric representation theory, this is mostly bypassed by instead computing algebraic $G$-theory -- algebraic $K$-theory of coherent sheaves \cite{Tho88, TT90}. Unfortunately, this simplification discards a lot of important information: for example, $G$-theory does not have a natural ring structure, making it useless for our purpose. 

We believe that it is actually possible to compute equivariant algebraic $K$-theory -- or at least its homotopy-invariant version $KH$, see \cite{Wei89, Hoy21, Kha20} -- for the examples of our interest, and link these computations to geometric representation theory.

\paragraph{Perfect geometry.}
Algebraic geometry over $\F_p$ has the feature of carrying the Frobenius endomorphism $\varphi$ and there is a canonical procedure called perfection which turns $\varphi$ into an automorphism. It was realized in \cite{Zhu14, BS16} that perfection forces desirable homological properties (such as descent for vector bundles along proper maps); this was in particular used to construct the Witt-vector affine Grassmannian as a perfect ind-scheme.

In fact, perfection simplifies equivariant algebraic $K$-theory and the above-discussed trace map, while preserving a good amount of interesting information. In the non-equivariant case, the result of perfection on algebraic $K$-theory is also considered in \cite{KM21, Cou23, AMM22}. It is thus very inviting to study our trace map in the perfect setup: perfect affine Grassmannians provide important examples in their own right, while many technical issues (present in characteristic zero) simplify. 

The representation-theoretic phenomena occurring in perfect geometry are also of considerable interest: they encode Frobenius twisting in positive characteristic representation theory. The effect of perfection on reductive group schemes and their representations was studied by \cite{CW22}.

\subsection{Results}
Our main aim in this paper is to study the trace map \eqref{definition: trace map in the group case}, \eqref{equation: dennis trace map} for affine Schubert varieties $X_{\leq \mu}$ in the perfect affine Grassmannian $\Gr$ for $\GL_n$ over $k=\F_p$ discussed in \S \ref{subsection: affine grassmannians and their perfections}. Let $T$ be the perfected diagonal torus of $\GL_n$. We prove the following.
\begin{theorem}[Theorem \ref{theorem: trace for GL_n affine grassmannian}]\label{introtheorem: trace map for affine grassmannian}
Let $\mu$ be a dominant coweight of $T$ and $X_{\leq \mu}$ the corresponding perfect affine Schubert variety. Then the trace map gives an equivalence
\begin{equation}\label{introeq: trace map}
    \tr: K^T(X_{\leq \mu}, k) \xrightarrow{\simeq} \RG(\Fix_{\frac{T}{T}}(X_{\leq \mu}), \O).
\end{equation}
Moreover:
\begin{itemize}
    \item both sides are supported in homological degree zero,
    \item integrally, $K^T(X_{\leq \mu}) \simeq KH^T(X_{\leq \mu})$. 
\end{itemize}
\end{theorem}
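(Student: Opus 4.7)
The plan is to stratify the perfect affine Schubert variety $X_{\leq \mu}$ into $T$-invariant affine Schubert cells, verify all three conclusions cell-by-cell, and then propagate them through the stratification by a dévissage argument based on the naturality of the trace. Each cell $C_\lambda \subset X_{\leq \mu}$ (indexed by the $T$-fixed points, which are the Weyl translates of coweights $\lambda \leq \mu$) is $T$-equivariantly the perfection of an affine space with linear $T$-action. Because both the trace map $\tr$ and the assignment $\RG(\Fix_{T/T}(-), \O)$ are compatible with closed-open decompositions of $T$-schemes in a suitable sense — which I expect to be the content of the general structural results for equivariant perfect $K$-theory advertised in the abstract — it suffices to establish the cell case and then induct on the number of strata.

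For the cell case, consider $C_\lambda \cong \A^{N, \perf}$ with linear $T$-action through characters $\chi_1, \ldots, \chi_N$. Homotopy invariance of $K$-theory for perfect schemes together with the computation of $K^T(\pt, k)$ identifies the left-hand side with the $k$-linearized representation ring $\Rep(T)$, concentrated in homological degree zero. On the right, $\Fix_T(C_\lambda)$ is the closed subscheme of $T \times C_\lambda$ cut out by $(\chi_i(t) - 1) x_i = 0$ for $i = 1, \ldots, N$; taking $T$-invariant global functions gives a free $\Rep(T)$-module in degree zero matching the $K$-theoretic answer via the standard character-theoretic formula, so the trace is manifestly an isomorphism. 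The integral identification $K^T(C_\lambda) \simeq KH^T(C_\lambda)$ reduces to the analogous statement for $\pt$, which holds by the earlier structural input because perfection collapses the $\A^1$-non-invariant part of algebraic $K$-theory.

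With the base case in hand, I would induct on the number of cells. Writing $X_{\leq \mu}$ as the union of a smaller closed Schubert subvariety $X_{\leq \mu'}$ and an open complement $U$ — itself an iterated union of top-dimensional cells — a localization cofiber sequence in equivariant perfect $K$-theory yields a triangle whose analog for $\RG \circ \Fix_{T/T}$ matches under $\tr$. The outer terms are equivalences by the inductive hypothesis (for $X_{\leq \mu'}$) and by the cell case applied repeatedly (for $U$), whence so is the middle term. Concentration in degree zero and the integral comparison $K^T \simeq KH^T$ then also pass through these cofiber sequences since the relevant objects are bounded below.

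The main obstacle is the localization step: classical equivariant algebraic $K$-theory does not satisfy excision along closed immersions into singular schemes, so the cofiber sequence needed in the induction is not a formal input. The argument rests on perfection: in the perfect setting, $K$-theory acquires substantially better descent properties, and one would need to establish enough equivariant cdh or Nisnevich descent — or perform a direct dévissage — adequate for the closed-open pair arising from one Schubert stratum at a time. A secondary technical point is the ordering of the induction, because the open complement $U$ is not itself closed in any smaller Schubert variety; the natural remedy is to peel off one $T$-fixed attracting cell at a time, following the Białynicki–Birula order determined by a generic cocharacter of $T$.
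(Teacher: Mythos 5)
Your base case is fine, but the inductive step has a genuine gap that the perfect setting does not repair, and the paper's actual argument uses a structurally different descent pattern.

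You propose to induct via localization fiber sequences for the closed-open pairs arising from the affine Schubert (or Bia\l ynicki--Birula) stratification. On the $K$-theory side this can indeed be set up in the perfect setting: Thomason--Trobaugh localization gives $K^T(X \text{ on } Z) \to K^T(X) \to K^T(U)$, and combining $K^T = G^T$ for pfp perfect schemes (Observation \ref{lemma: K-theory and G-theory of perfect schemes}) with Quillen dévissage for $G$-theory identifies the fiber term with $K^T(Z)$. The problem is the other side of the trace. A localizing invariant such as $HH^T(-, k) = \RG(\Fix^{\lL}_{T/T}(-), \O)$ produces the fiber sequence
\begin{equation*}
   HH\bigl(\Perf^T_Z(X), k\bigr) \to HH^T(X, k) \to HH^T(U, k),
\end{equation*}
where the fiber term is the Hochschild homology of the category of complexes \emph{supported on} $Z$, \emph{not} $HH^T(Z, k)$. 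Hochschild homology does not satisfy dévissage: already for $X = \A^{1, \perf}$ and $Z = \{0\}$ one computes $HH(\Perf_Z(X), k)$ as the (shifted) local cohomology $\bigl(k[x^{\pm 1/p^\infty}]/k[x^{1/p^\infty}]\bigr)[-1]$, which sits in homological degree $-1$, whereas $HH(Z, k) = k$ sits in degree $0$. Consequently the natural map of fiber terms induced by $\tr$ is \emph{not} the trace map for $Z$, the inductive hypothesis cannot be applied to it, and the five-lemma breaks precisely on the outer term you rely on. You anticipated a weaker version of this obstacle ("classical equivariant algebraic $K$-theory does not satisfy excision along closed immersions"), but the obstruction persists on the geometric side even once you grant everything perfection buys you on the $K$-theory side; it is an intrinsic failure of dévissage for $HH$.

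The paper's proof of Theorem~\ref{theorem: trace for GL_n affine grassmannian} avoids open-closed localization altogether. It inducts on $\lg(\mu)$ and uses two tools whose categorical decompositions involve $\Perf^T$ of an honest $T$-scheme at every step, so that the trace maps match corner by corner: proper excision on $T$-equivariant perfect abstract blowup squares (Theorem~\ref{lemma: homotopy fiber square for K_T}), whose four corners are all $T$-schemes rather than a "support" category; and the semi-orthogonal decomposition for stratified derived Grassmannian bundles of \cite{Jia23}, which exhibits $K^T(X_{\leq \mu}, k) \to HH^T(X_{\leq \mu}, k)$ as a natural direct summand of the trace for the partial affine Demazure resolution $Y_{\leq \mu_\bullet}$, itself controlled via the ordinary Grassmannian-bundle formula over $X_{\leq \lambda}$ with $\lg(\lambda) < \lg(\mu)$. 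This is not a matter of taste: the open-closed route fails because Hochschild homology sees infinitesimal neighbourhoods that $K$-theory of the closed subscheme cannot detect, and the blowup/semi-orthogonal pattern is what makes the corners honest schemes. To close your gap you would have to establish a dévissage statement for $HH$ along perfect closed immersions, which is false. A secondary caveat is that your stratification-by-cells induction has no decreasing complexity parameter analogous to $\lg(\mu)$; the paper's Demazure step supplies one automatically.
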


The proof of Theorem \ref{introtheorem: trace map for affine grassmannian} goes as follows. 

We first interpret $\RG(\Fix_{\frac{T}{T}}(X_{\leq \mu}), \O)$ in terms of equivariant Hochschild homology $HH^T(X_{\leq \mu}, k)$ in the sense of \S \ref{section: two viewpoints on hochschild homology}. This puts both sides of \eqref{introeq: trace map} on the equal footing of $k$-linear localizing invariants \cite{BGT13, HSS17, Tam17, LT19}. 
In doing so, we take the opportunity to discuss the trace map, record its properties, relate it to existing literature and show some examples. Most of this is known to experts, but the literature is a bit sparse: the case of global quotients was not exploited in detail and some of our statements even seem to be new. 
To give some intuition, we discuss partial flag varieties where the trace map gives an isomorphism in degree zero in Example \ref{example: partial flag variety}. This identifies their $K^G_0$ with global invariant functions on multiplicative Grothendieck--Springer resolutions.

Secondly, we discuss equivariant algebraic $K$-theory and equivariant Hochschild homology for perfect schemes. After starting this project, we noted that \cite{KM21, AMM22, Cou23} studied non-equivariant algebraic $K$-theory of perfect schemes and we take advantage of their methods. We establish basic structural properties, noting that both sides of the trace map behave compatibly under perfection. Most notably, we check descent for localizing invariants on $T$-equivariant perfect abstract blowup squares.
\begin{theorem}[Theorem \ref{lemma: homotopy fiber square for K_T}]\label{introtheorem: perfect proper excision}
Localizing invariants satisfy proper excision on $T$-equivariant perfect abstract blowup squares. This in particular applies to $K^T(-)$ and $HH^T(-, k)$.
\end{theorem}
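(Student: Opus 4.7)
The plan is to bootstrap from the non-equivariant proper excision for perfect qcqs schemes, which is established earlier in the paper in the spirit of \cite{KM21, AMM22, Cou23}. For any localizing invariant $L$ and any $T$-equivariant perfect qcqs scheme $X$, we identify $L^T(X) = L(\Perf([X/T]))$ via the usual equivalence between $T$-equivariant perfect complexes and perfect complexes on the quotient stack, so the claim reduces to showing that the square of $\Perf$-categories attached to $[X/T], [Y/T], [Z/T], [E/T]$ is sent by $L$ to a cofiber sequence.

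The main step is descent along the $T$-torsor $X \to [X/T]$ whose Čech nerve is the simplicial perfect scheme $X_\bullet = X \times T^\bullet$. For each $[n] \in \Delta$, the base change $(X \times T^n, Y \times T^n, Z \times T^n, E \times T^n)$ is again a perfect abstract blowup square of perfect qcqs schemes, since properness, closed immersions and the isomorphism condition off $Z$ all persist under the flat base change by the smooth perfect scheme $T^n$. Non-equivariant proper excision at each level gives cofiber sequences $L(X \times T^n) \to L(Y \times T^n) \oplus L(Z \times T^n) \to L(E \times T^n)$ functorial in $[n]$, and since $\lim_{[n] \in \Delta}$ in spectra preserves fiber sequences and hence cofiber sequences, totalization yields the desired equivariant cofiber sequence as soon as one knows $L^T(X) \simeq \lim_{[n] \in \Delta} L(X \times T^n)$.

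This last descent statement is the main obstacle. In general it fails for localizing invariants on stacks, but here one expects it thanks to the additional rigidity of perfection together with the tameness of a torus quotient. I would argue it at the categorical level in $\Cat^\perf$: Karoubi exact sequences of $\Perf$-categories are stable under tensoring, and the passage from $\Perf$ of a $T$-scheme to $\Perf$ of its quotient stack is controlled by $\Perf(BT)$ in this sense, so the equivariant square arises as a suitable $\Perf(BT)$-linear promotion of the non-equivariant one. For the two concrete invariants $K^T$ and $HH^T(-, k)$ named in the theorem, the descent along $X \to [X/T]$ can alternatively be verified directly from their definitions together with the perfect-descent results recalled earlier in the paper, which suffices for the stated consequences.
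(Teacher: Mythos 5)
The core of your strategy---Čech descent along $X \to T\backslash X$ at the level of the localizing invariant---does not work, and the gap you flag yourself is fatal rather than just ``the main obstacle.'' For a general localizing invariant $\E$, the limit $\lim_{[n] \in \Delta} \E(X \times T^n)$ computes a \emph{Borel-type} equivariant invariant (the right Kan extension of $\E$ from schemes to the quotient stack), not the genuine $\E^T(X) = \E(\Perf(T\backslash X))$. These genuinely differ: already for $X = \pt$ and $\E = K$ one has $K^T(\pt) = R(T)$, whereas the totalization $\lim_\Delta K(T^\bullet)$ is something else. More fundamentally, localizing invariants do not commute with the limits appearing in a Čech totalization, so even if the two invariants agreed pointwise you could not push a termwise fiber sequence through the $\lim_\Delta$. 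The hand-wave about ``$\Perf(BT)$-linear promotion'' also does not rescue this: $\Perf^T(X)$ is not $\Perf(X) \otimes \Perf(BT)$ for a nontrivial action, so stability of Karoubi sequences under tensoring does not apply in the form you need.

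The paper avoids this by pushing the descent argument one level deeper, to $\infty$-categories of quasi-coherent sheaves rather than to the spectrum-valued invariant. The actual proof of Theorem \ref{lemma: homotopy fiber square for K_T} verifies three things: (i) $T\backslash S$ is flawless for each corner $S$, i.e.\ $\Ind(\Perf(T\backslash S)) \simeq D_{\Qcoh}(T\backslash S)$, which is Lemma \ref{lemma: perfectness of perfect torus quotients} and uses that $T$ is of multiplicative type plus perfection; (ii) the square of $D_{\Qcoh}$-categories of the quotient stacks is a pullback, and \emph{this} is where the Čech nerve of $X \to T\backslash X$ appears, because $D_{\Qcoh}(-)$ does satisfy $v$-descent and so the pullback can be checked levelwise on the simplicial resolution where each level is a perfect abstract blowup of schemes and \cite[Theorem 11.2]{BS16} applies; and (iii) $Ri_*$ along the perfect closed immersion $E \hookrightarrow Y$ is fully faithful. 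With (i)--(iii) in hand, the result is not a descent statement at all, but an application of Tamme's excisive-square criterion \cite[Theorem 18]{Tam17}, exactly as in \cite[Proof of Theorem 4.3]{KM21}: the pullback of Ind-categories plus full faithfulness of $Ri_*$ exhibit the $\Perf^T$-square as a motivic pullback square, to which every localizing invariant is applied. So the missing ingredient in your write-up is precisely the replacement of ``descent for $\E$'' by ``descent for $D_{\Qcoh}$ + flawlessness + Tamme's theorem.''
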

We further observe that $K^G_i(X)$ are $\Z[\frac{1}{p}]$-modules for $i \geq 1$ in Lemma \ref{lemma: kratzers argument}. We also prove in Observation \ref{lemma: K-theory and G-theory of perfect schemes} that equivariant $K$-theory and $G$-theory agree for perfect schemes -- consequently, $G$-theory of perfect schemes can be non-connective by Note \ref{note: non-connective G-theory}.

We then deduce Theorem \ref{introtheorem: trace map for affine grassmannian} inductively on the affine Schubert stratification by descent from partial affine Demazure resolutions, the main input being Theorem \ref{introtheorem: perfect proper excision} and semi-orthogonal decompositions for stratified Grassmannian bundles \cite{Jia23}. 

\bigskip

Finally, it is not only the case that the trace map is an abstract isomorphism, but we can employ Theorem \ref{introtheorem: perfect proper excision} to obtain explicit presentations for the rings in question. To showcase this, we compute $K^T_0$ for some singular Schubert varieties in the $\GL_2$ and $\GL_3$ affine Grassmannians -- see Examples \ref{example: K-theory of adjoin GL_2}, \ref{example: K-theory of adjoint GL_3} for details.

\bigskip

We expect direct analogues for $\GL_n$-equivariant (or even $\lop \GL_n$-equivariant) algebraic $K$-theory. Also, our computations can be run for equivariant homotopy $K$-theory $KH^{\GL_n}$ in any characteristic (without perfection); see \cite{Low25}. We further believe the rings in question are then given by the base-change of equivariant cohomology along the map $H^{\bullet}_{\GL_n}(\pt) \to K^{\GL_n}_0(\pt) \to K^{\GL_n}_{\bullet}(\pt)$, the first arrow given in coordinates as $\Z[t_1, \dots, t_n]^{S_n} \to \Z[t_1^{\pm 1}, \dots, t_n^{\pm n}]^{S_n}$. However, we refrain from discussing these questions here.

\bigskip
We close up the paper with more computations of perfect $K$-theory. In particular, we use the existing literature together with Theorem \ref{introtheorem: perfect proper excision} to understand equivarint $K$-theory and the trace map for all perfectly proper toric varieties.
\begin{theorem}[Theorem \ref{theorem: trace map for perfectly proper toric varieties}]
Let $T$ be a perfect split torus and $X$ any perfectly proper toric variety. Then the trace map induces an equivalence
\begin{equation*}
    K^T(X, k) \xrightarrow{\simeq} \RG(\Fix_{\frac{T}{T}}(X), \O).
\end{equation*}
and both sides are supported in homological degrees $\leq 0$. Moreover, $K^T(X) \simeq KH^T(X)$.
\end{theorem}
However, there can be negative degrees for singular $X$.
In particular, we deduce that proper singular toric varieties over $k$ can have nontrivial negative $K^T(-)$ by exhibiting cohomology classes on the right-hand side; this is the case even before perfection -- see Note \ref{note: negative K-theory of perfect toric varieties} for details. We did not find similar computations in the literature.

\subsection{Structure}
We start in \S \ref{section: fixed-point schemes and traces} by introducing fixed-point schemes of group actions and their basic properties. We continue by discussing the trace map from equivariant algebraic $K$-theory to their global functions in elementary terms. We explain the relationship to the existing literature on Hochschild homology and derived loop spaces. The contents of this section work in any characteristic.

In \S \ref{section: perfect schemes} we give a short overview of perfect algebraic geometry, which is the setup for the rest of the paper. We discuss equivariant $K$-theory of perfect schemes in \S \ref{section: perfect K-theory}, explaining some of its main structural properties. In \S \ref{section: perfect trace maps} we show how the previous two sections fit together via the trace map, allowing us to control its behaviour.

We focus on perfect affine Grassmannians in \S \ref{section: trace map for GL_2 affine grassmannian}, proving that the trace map is an isomorphism for the $T$-equivariant $K$-theory of affine Schubert varieties in the $\GL_n$-affine Grassmannian -- see Theorem \ref{theorem: trace for GL_n affine grassmannian}.
In \S \ref{section: some examples} we explicitly compute presentations for these rings in some small examples. In fact, these computations work equally well for equivariant homotopy $K$-theory $KH$ in the classical setup (including characteristic zero).

We conclude with more examples in \S \ref{section: more examples}. These should give intuition about perfect $K$-theory, but are orthogonal to our original motivation. We in particular discuss the trace map for perfect toric varieties in Theorem \ref{theorem: trace map for perfectly proper toric varieties}.

\subsection{Acknowledgements}
I would like to thank the following people for fruitful discussions, helpful sanity checks or comments on previous drafts:
Roman Bezrukavnikov, Jens Niklas Eberhardt, Mischa Elkner, Tamás Hausel, Andres Fernandez Herrero, Adeel Khan, Bernhard Köck, Andrei Konovalov, Quoc Ho, Mirko Mauri, Matthew Morrow, Charanya Ravi, Kamil Rychlewicz, Shyiu Shen, Vova Sosnilo, Georg Tamme, Xinwen Zhu. I would further like to thank Marc Hoyois and the anonymous referee for spotting an error in a previous version.

This work was done during author's PhD at the Institute of Science and Technology Austria (ISTA). It was funded by a DOC Fellowship of the Austrian Academy of Sciences and by the Austrian Science Fund (FWF) 10.55776/P35847. For open access purposes, the author has applied a CC BY public copyright license to any author-accepted manuscript version arising from this submission.
\section{Fixed-point schemes and traces from equivariant \texorpdfstring{$K$}{K}-theory}\label{section: fixed-point schemes and traces}

We work over a fixed base ring $k$. We denote by $\Sch_k$ the category of $k$-schemes. Let $G$ be a group scheme over $k$ and denote $\Sch_k^G$ the category of $G$-equivariant $k$-schemes. Unless specified otherwise, quotients are taken as stack quotients in the fpqc topology.

\subsection{Fixed-point schemes of group actions}\label{section: fixed-point schemes of group actions}

\paragraph{Definition of fixed point families.}
Suppose $G$ acts on $X \in \Sch^G_k$ via $m: G\times X \to X$.  We define the associated {\it fixed-point scheme} as the fiber product of schemes
\begin{equation}\label{equation: definition of fixed-point schemes}
\begin{tikzcd}
\Fix_G(X) \arrow[r] \arrow[d] & G\times X \arrow[d, "m \times \pr_2"] \arrow[r, "\pr_1"] & G \\
X \arrow[r, "\Delta_X"] & X\times X
\end{tikzcd}
\end{equation}
Its functor of points is given by
\begin{equation*}
    \Fix_G(X): R \mapsto \{ (g, x) \in G(R) \times X(R) \mid gx = x \}.
\end{equation*}
The horizontal composition in \eqref{equation: definition of fixed-point schemes} gives a map $\pi: \Fix_G(X) \to G$. For any $g \in G(k)$, the fiber $\pi^{-1}(g)$ parametrizes those $x \in X(R)$ which are fixed by $g$. 
Given any $S \in \Sch_k$ with a map $S \to G$, we obtain
\begin{equation}\label{equation: fixed-point scheme over S}
\Fix_S(X) := S \underset{G}{\times} \Fix_G     
\end{equation}
and view it as the fixed point family of the restriction of the $G$-action to $S$.

\begin{example}\label{example: fixed-point schemes of partial flag varieties}
For instance, when applied to the full flag variety $X = G/B$ with the obvious left $G$-action, $\Fix_G(X)$ returns the multiplicative Grothendieck--Springer resolution; the fibers of $\pi: \Fix_G(G/B) \to G$ are the multiplicative Springer fibers. For other parabolic subgroups $P$, it is a parabolic version thereof. See Example \ref{example: partial flag variety} for more. These play an important role in geometric representation theory. 
\end{example}

\paragraph{Functoriality and equivariant structure.}
If $X \xrightarrow{f} Y$ is a $G$-equivariant morphism, the defining diagram of $\Fix_G(X)$ naturally maps to the defining diagram $\Fix_G(Y)$ via $f$ on each occurrence of $X$ and $\id_G$ on the group. By the universal property of fiber products, this induces a map
\begin{equation*}
\Fix_G(f): \Fix_G(X) \xrightarrow{} \Fix_G(Y),    
\end{equation*}
which we also call $f$ by abusing notation. 

Now let $g \in G(R)$ and note the following: a point $x \in X(R)$ is fixed by an element $h \in G(R)$ if and only if its translate $gx \in X(R)$ is fixed by $ghg^{-1} \in G(R)$. In other words, $\Fix_G(X)$ carries a functorial $G$-action
\begin{align*}
   (\Ad_G \times m):  G \times \Fix_G(X) & \xrightarrow{} \Fix_G(X), \\
   (g, (h, x)) & \mapsto (ghg^{-1}, gx). 
\end{align*}

In particular, $\Fix_G(\pt) \in \Sch^G_k$ returns $G$ with the adjoint action $G\times G \xrightarrow{\Ad} G$, $(g, h) \mapsto ghg^{-1}$. We denote the stack quotient by $\frac{G}{G}$. For general $X \in \Sch^G_k$ we denote the stack quotient of $\Fix_G(X)$ by $G$ as $\Fix_{\frac{G}{G}}(X)$. This has a canonical structure map
\begin{equation*}
    \Fix_{\frac{G}{G}}(X) \xrightarrow{\pi} \tfrac{G}{G}.
\end{equation*}
Taking quotients of \eqref{equation: definition of fixed-point schemes} further gives the following fiber square of global quotient stacks.
\begin{equation}\label{equation: fixed-point stack as fiber product}
\begin{tikzcd}
\Fix_{\frac{G}{G}}(X) \arrow[r] \arrow[d] & \frac{G\times X }{G} \arrow[d, "m \times \pr_2"] \arrow[r, "\pi"] & \frac{G}{G} \\
G \backslash X \arrow[r, "\Delta_X"] & \frac{X\times X}{G}
\end{tikzcd}
\end{equation}

As in \eqref{equation: fixed-point scheme over S}, for any algebraic stack $S$ over $k$ with a map $S \to \frac{G}{G}$ we write $\Fix_S(X)$ for the base change of $\Fix_{\frac{G}{G}}(X)$ to $S$. In particular, base change along the quotient map $G \to \frac{G}{G}$ returns back $\Fix_G(X)$, making our notation consistent.

\paragraph{Steinberg section.}\label{paragraph: steinberg section}
Let $G$ be a reductive group over a field and denote $\s := \Tfrac{G}{G}$ the GIT quotient of $G$ by its adjoint action. There is the canonical affinization map $\frac{G}{G} \to \s$. Denote further $G^{\reg} \hookrightarrow G$ the regular locus of $G$ consisting of points whose centralizer has the minimal possible dimension (equal to the rank of $G$) \cite[\S 4]{Hum95}; this is an open subvariety of codimension $\geq 2$ by \cite[\S 4.13]{Hum95}. Let $\rho: G^{\reg} \hookrightarrow G \to \s$ be the natural map. A {\it Steinberg section} is a section of this map $\rho$. Such a section is not unique, but always exists at least when $G$ is split, semisimple, simply connected group or $GL_n$ by \cite[\S 4.15]{Hum95}.

Via such a Steinberg section, $\s \hookrightarrow G$ is a closed subscheme intersecting every regular conjugacy class of $G$ in a single geometric point.
It is often very useful to restrict attention to $\Fix_{\s}(X)$. This is a scheme whose global functions often agree with global functions on the stack $\Fix_{\frac{G}{G}}(X)$.

\paragraph{Open and closed immersions.} 
\begin{proposition}\label{proposition: open and closed immersions}
Assume that a $G$-equivariant morphism $f: U \to X$ has one of the following properties. Then the same is true for $f: \Fix_G(U) \to \Fix_G(X)$.
\begin{itemize}
    \item[(i)] monomorphism
    \item[(ii)] open immersion
    \item[(iii)] closed immersion
\end{itemize}
In particular, $\Fix_G(-)$ preserves Zariski covers and locally closed stratifications.
\end{proposition}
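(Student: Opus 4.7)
The plan is to realise $\Fix_G(f)$ as a base change of $f$ itself whenever $f$ is a monomorphism, and then appeal to stability of each of the three properties under base change. Concretely, I consider the commutative square
\begin{equation*}
\begin{tikzcd}
\Fix_G(U) \arrow[r, "\Fix_G(f)"] \arrow[d] & \Fix_G(X) \arrow[d] \\
U \arrow[r, "f"] & X
\end{tikzcd}
\end{equation*}
where the vertical arrows are the natural projections $(g,-) \mapsto -$. I claim this square is Cartesian as soon as $f$ is a monomorphism. On $R$-points, an element of $\Fix_G(X) \times_X U$ is a pair $(g,u) \in G(R)\times U(R)$ satisfying $g \cdot f(u) = f(u)$; by $G$-equivariance this rewrites as $f(g \cdot u) = f(u)$, and because $f$ is a monomorphism (hence injective on $R$-points) this forces $g\cdot u = u$. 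Thus $\Fix_G(X)\times_X U \cong \Fix_G(U)$ functorially in $R$.

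With this Cartesian identification, claims (i), (ii), (iii) are each immediate from the corresponding base-change stability results for morphisms of schemes: monomorphisms, open immersions and closed immersions are all preserved under arbitrary base change. So no direct argument on $\Fix_G(U)$ or $\Fix_G(X)$ is needed beyond the identification above.

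For the final assertion, a $G$-equivariant Zariski cover of $X$ is a jointly surjective family $\{f_i: U_i \to X\}$ of $G$-equivariant open immersions, whose sources are in particular $G$-stable. Part (ii) yields that each $\Fix_G(f_i)$ is an open immersion, and joint surjectivity is straightforward: given $(g,x)\in \Fix_G(X)$, pick $i$ with $x\in U_i$; since $U_i$ is $G$-stable we have $(g,x)\in \Fix_G(U_i)$. The case of locally closed stratifications is handled in the same way, combining (ii) and (iii). The only genuinely load-bearing step is verifying the Cartesian square above, and the equivariance of $f$ is used crucially there — for $f$ which is not a monomorphism the square fails, so the hypothesis on $f$ cannot be relaxed.
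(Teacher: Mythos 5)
Your proof is correct and follows essentially the same route as the paper's: identify $\Fix_G(U) \cong \Fix_G(X) \times_X U$ on functors of points using equivariance and injectivity of $f$ on $R$-points, then invoke stability of the three properties under base change. The only difference is that you spell out the joint surjectivity for Zariski covers, which the paper dispatches with a one-word ``similarly''; that is a welcome extra bit of rigor rather than a divergence.
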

\begin{proof}
    If $f: U \to X$ is a $G$-equivariant monomorphism, one can immediately check that on functors of points
    \begin{equation*}
        \Fix_G(U)(R) 
        = \{ (g, u) \in G(R)\times U(R) \mid gu=u\}
        = (\Fix_G(X)\times_{X} U)(R)
    \end{equation*}
    so that the map on fixed point schemes is also a monomorphism, proving (i).
    Moreover, the above computation shows that the diagram
    \begin{equation}\label{equation: fixed-point schemes and base change}
    \begin{tikzcd}
    \Fix_G(U) \arrow[r] \arrow[d] & \Fix_G(X) \arrow[d]\\
    U \arrow[r] & X
    \end{tikzcd}
    \end{equation}
    is fibered.   
    To prove (ii), (iii), note that these properties are stable under base change; the result thus follows from the fiber square \eqref{equation: fixed-point schemes and base change}; similarly for Zariski covers. The case of locally closed stratifications is now immediate.
\end{proof}

\paragraph{Properness and global functions.}
\begin{proposition}\label{proposition: basic properties of fixed point families}
The fixed point scheme satisfies the following.
\begin{itemize}
    \item[(i)] If $X \in \Sch_k$ is separated, then the $\Fix_G(X) \to G\times X$ is a closed immersion.
    \item[(ii)] If $X \in \Sch_k$ is proper, then $\Fix_G(X) \to G$ is also proper. 
    \item[(iii)] If $X \in \Sch_k$ is proper, then $\Fix_\frac{G}{G}(X) \to \frac{G}{G}$ is also proper.
\end{itemize}
\end{proposition}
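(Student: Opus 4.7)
The plan is to reduce all three statements to elementary properties of the defining fiber square together with standard stability / descent results for closed immersions and properness.

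For (i), the key observation is that in the defining diagram
\begin{equation*}
\begin{tikzcd}
\Fix_G(X) \arrow[r] \arrow[d] & G\times X \arrow[d, "m \times \pr_2"] \\
X \arrow[r, "\Delta_X"] & X\times X
\end{tikzcd}
\end{equation*}
the map $\Fix_G(X) \to G \times X$ is the base change of the diagonal $\Delta_X \colon X \to X \times X$. When $X$ is separated over $k$, this diagonal is a closed immersion by definition, and closed immersions are stable under arbitrary base change, so $\Fix_G(X) \hookrightarrow G \times X$ is a closed immersion too.

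For (ii), I would factor the canonical projection as
\begin{equation*}
    \Fix_G(X) \hookrightarrow G \times X \xrightarrow{\pr_1} G.
\end{equation*}
The first arrow is a closed immersion by (i), hence proper. The second arrow is the base change of the structure map $X \to \Spec k$ along $G \to \Spec k$, and properness is stable under base change; since $X$ is proper, so is $G \times X \to G$. Composition of proper maps is proper, giving (ii).

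For (iii), by construction $\Fix_{G/G}(X)$ is the quotient of $\Fix_G(X)$ by the conjugation-plus-translation $G$-action introduced earlier, so the square
\begin{equation*}
\begin{tikzcd}
\Fix_G(X) \arrow[r] \arrow[d] & \Fix_{\frac{G}{G}}(X) \arrow[d] \\
G \arrow[r] & \frac{G}{G}
\end{tikzcd}
\end{equation*}
is cartesian, with the bottom row the atlas of the adjoint quotient stack. Since $G$ is smooth and quasi-compact, this atlas is a smooth, surjective, quasi-compact cover. Properness of a morphism of algebraic stacks (i.e.\ separated, of finite type, universally closed) can be verified smooth-locally on the target. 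The base change along $G \to \frac{G}{G}$ is exactly the map $\Fix_G(X) \to G$ of (ii), which we have just shown is proper; therefore $\Fix_{\frac{G}{G}}(X) \to \frac{G}{G}$ is proper.

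The only real subtlety is the smooth-local verification of properness in (iii); one must know that each constituent of properness (separatedness, finite type, universal closedness) descends along the smooth surjection $G \to \frac{G}{G}$, which is standard but requires the quasi-compactness hypothesis on $G$ (so that the cover is genuinely smooth and qcqs). Everything else is routine manipulation of the fiber square defining $\Fix_G(X)$.
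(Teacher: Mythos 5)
Your proof is correct and follows the same route as the paper: (i) is base change of the diagonal, (ii) factors through $G\times X$ using (i) and stability of properness under base change and composition, and (iii) descends properness along the smooth quasi-compact atlas $G\to\frac{G}{G}$. Your write-up just spells out the descent step in (iii) a bit more explicitly than the paper does.
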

\begin{proof}
When $X$ is separated over $k$, $\Delta_X$ is a closed immersion, so the same is true for its base change $\Fix_G(X) \to G\times X$, proving (i).
When $X$ is proper over $k$, then also $G\times X \to G$ is proper. Since $X$ is in particular separated over $k$, the map $\Fix_G(X) \to G\times X$ is a closed immersion by part (i), so in particular proper. Therefore also the composition $\Fix_G(X) \to G\times X \to G$ is proper, proving (ii).
Part (iii) follows from (ii) by fpqc descent for proper maps.
\end{proof}

\begin{notation}\label{notation: homological grading}
We will use the homological grading on derived global sections $\RG(X, -)$ of coherent sheaves. We denote it by lower indices as
\begin{equation*}
    H_i(X, \O) = \RG_i(X, \O) := \RG^{-i}(X, \O), \qquad \forall i \in \Z.
\end{equation*}
The reason for this notation is compatibility with the homological grading on algebraic $K$-theory.
\end{notation}

\begin{remark}
Given a $G$-action on a scheme $X$, we tautologically have 
\begin{equation*}
    H_0(\Fix_{\frac{G}{G}}(X), \O) = H_0(\Fix_G(X), \O)^G.
\end{equation*}
We have seen that whenever $X$ is proper, the structure map $\Fix_G(X) \to G$ is proper as well, so the global functions $H_0(\Fix_G(X), \O)$ form a finitely generated $H_0(G, \O)$-module. 
\end{remark}

\subsection{Comparison to derived loop spaces}

\paragraph{Reduced and derived versions.}
There are the following versions. On the one hand, $\Fix_\frac{G}{G}(X)$ will be often non-reduced, and we can take its reduction $\Fix^{\red}_{\frac{G}{G}}(X)$. On the other hand, we may take the defining fiber product \eqref{equation: definition of fixed-point schemes} in derived schemes, giving a derived enhancement which we denote $\Fix^{\lL}_{\frac{G}{G}}(X)$. We get closed immersions
\begin{equation}\label{equation: reduced, classical and derived fixed-point schemes}
    \Fix^{\red}_{\frac{G}{G}}(X) \hookrightarrow \Fix_{\frac{G}{G}}(X) \hookrightarrow \Fix^{\lL}_{\frac{G}{G}}(X).
\end{equation}

\paragraph{Derived loop spaces.}
The above construction is not new: it is a special case of the construction of derived loop stacks of \cite[\S 2.4]{BFN10}, \cite[\S 4.4]{Toe14}; also see \cite{KP16}. This interpretation has useful structural properties, so we spell out the (well-known) relationship.\footnote{We thank Quoc Ho for helpful discussions regarding this.}

For any derived stack $\eX$ over $k$, its derived loop space $\dL \eX$ is defined by the derived fiber product
\begin{equation}\label{equation: definition of derived loop spaces}
    \begin{tikzcd}
     \dL \eX \arrow[d] \arrow[r] & \eX \arrow[d, "\Delta"] \\
     \eX \arrow[r, "\Delta"] & \eX \underset{k}{\times} \eX \arrow[r] \arrow[d] & \eX \arrow[d] \\
      & \eX \arrow[r] & \pt
    \end{tikzcd}
\end{equation}
Alternatively, this is the mapping space $\dL(\eX) = \Maps(S^1, \eX)$ of derived stacks. In other words, it is the derived inertia stack of $\eX$.
The same definition can be done in a non-derived way, giving the classical substack $\dL(\eX)^{\cl} \hookrightarrow \dL(\eX)$.

\paragraph{Identification of (derived) fixed-point schemes and (derived) loop spaces.}
In order to compare our fixed-point schemes to loop spaces, consider the following (non-derived) fiber squares.
\begin{equation}\label{equation: fixed-point scheme as diagonal}
\begin{tikzcd}
\Fix_{\frac{G}{G}}(X) \arrow[r] \arrow[d] & \frac{G\times X }{G} \arrow[d, "m \times \pr_2"] \arrow[r] &  G\backslash X \arrow[d, "\Delta_{G\backslash X}"]\\
G \backslash X \arrow[r, "\Delta_X"] & \frac{X\times X}{G} \arrow[r] & G\backslash X \times G\backslash X  \\
\end{tikzcd}
\end{equation}
The left-hand square was constructed in \eqref{equation: fixed-point stack as fiber product}. The right-hand square can be easily constructed on the level of functors of points as follows. Let $R \in \Sch_k$ be a test scheme. Then we obtain the groupoids:

\begin{align*}
    (G \backslash X) (R) 
    &= \left\{ (\eP, f) \ \mid 
    \substack{\eP \text{ principal } G\text{-bundle on } R, \\ 
    f:\eP \to X \ G \text{-equivariant map}} \right\},\\
    \left( \frac{X \times X}{G} \right) (R) 
    &= \left\{ (\eP_1, f_1, f_2) \ \mid 
    \substack{\eP \text{ principal } G\text{-bundle on } R, \\ 
    f_1, f_2:\eP \to X \ G \text{-equivariant maps}} \right\},\\
    \left(  G\backslash X \times G\backslash X \right) (R) 
    &= \left\{ (\eP_1, \eP_2, f_1, f_2) \ \mid
    \substack{\eP_1, \eP_2 \text{ principal } G\text{-bundles on } R, \\ 
    f_1:\eP_1 \to X, \ f_2:\eP_2 \to X \ G \text{-equivariant maps}} 
    \right\},\\
    \left( \frac{G \times X}{G} \right) (R) 
    &= \left\{ (\eP, f_1, f_2) \ \mid 
    \substack{\eP \text{ principal } G\text{-bundle on } R, \\ 
    f_1:\eP \to G, \ f_2:\eP \to X \ G \text{-equivariant maps}} \right\} \\
    &= \left\{ (\eP, f, \psi) \ \mid 
    \substack{\eP \text{ principal } G\text{-bundle on } R, \\
    \psi: \eP \to \eP \text{ automorphism},  \\ 
    f:\eP \to X \ G \text{-equivariant map} } \right\}.
\end{align*}
Here, the first line is the definition of a stack quotient through its functor of points. The next three lines follow from this definition by elementary manipulations. For the last line, fix a principal $G$-bundle $\eP$ over $R$. We only need to identify $G$-equivariant maps $f_1: \eP \to G$ with automorphisms $\psi: \eP \to \eP$. We construct this bijection on functors of points. Let $R'$ be any test $R$-algebra and $s \in \eP(R')$ any section of $\eP$ over $R'$. In the forward direction, if $f_1: s \mapsto f_1(s)$, we define $\psi: s \mapsto f_1(s) \cdot s$. In the backward direction, given $\psi: s \mapsto \psi(s)$, we define $f_1(s)$ to be the unique element of $G(R')$ sending $s$ to $\psi(s)$. These maps are inverse bijections, so the final line follows.

Now, this last expression is indeed given by the fiber product of three preceding groupoids along the obvious maps. Altogether, the right-hand square in \eqref{equation: fixed-point scheme as diagonal} is fibered as claimed.

Finally, let us remark that the horizontal maps in the right-hand square are flat -- indeed, they are quotient maps by the group scheme $G$ over $k$. Hence the right-hand square is fibered in the derived sense as well.

\begin{lemma}\label{corollary: fixed-point schemes as loop stack}
We have the following isomorphism of stacks:
\begin{equation*}
    \Fix_{\frac{G}{G}}(X) = \dL(G\backslash X)^{\cl} \qquad \text{and} \qquad \Fix^{\lL}_{\frac{G}{G}}(X) = \dL(G\backslash X).
\end{equation*}
\end{lemma}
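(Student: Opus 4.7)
The plan is to paste the two Cartesian squares in \eqref{equation: fixed-point scheme as diagonal} so that the outer rectangle becomes the defining fiber square of $\dL(G\backslash X)^{\cl}$, and then to upgrade the same pasting to derived fiber products for the second isomorphism.

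First I would verify that the right-hand square in \eqref{equation: fixed-point scheme as diagonal} is Cartesian. This is exactly what the functor-of-points computation immediately preceding the lemma establishes: a point of $\tfrac{G \times X}{G}(R)$ is a triple $(\eP, \varphi, f)$ consisting of a principal $G$-bundle $\eP$ on $R$, an automorphism $\varphi$ of $\eP$, and a $G$-equivariant map $f\colon \eP \to X$, which is manifestly the fiber product of $(G\backslash X)(R)$ with $(\tfrac{X\times X}{G})(R)$ over $(G\backslash X \times G\backslash X)(R)$. Since the left-hand square is Cartesian by \eqref{equation: fixed-point stack as fiber product}, pasting identifies $\Fix_{\frac{G}{G}}(X)$ with the fiber product $G\backslash X \times_{G\backslash X \times G\backslash X} G\backslash X$ taken along two copies of $\Delta_{G\backslash X}$. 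But this is literally the classical truncation of the defining diagram \eqref{equation: definition of derived loop spaces} for $\eX = G\backslash X$, yielding the first claimed isomorphism.

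For the derived statement, I would re-read both squares of \eqref{equation: fixed-point scheme as diagonal} with derived fiber products. The left square then becomes the defining diagram of $\Fix^{\lL}_{\frac{G}{G}}(X)$ by \eqref{equation: reduced, classical and derived fixed-point schemes}. The right square remains Cartesian because the groupoid-theoretic description of $\tfrac{G\times X}{G}$ extends verbatim to derived test rings, so the same fiber-product identity persists. Pasting once more identifies $\Fix^{\lL}_{\frac{G}{G}}(X)$ with the derived self-intersection of $\Delta_{G\backslash X}$, i.e.\ with $\dL(G\backslash X)$.

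The only step that is not pure diagram chasing is the derived Cartesianness of the right-hand square; once that is granted --- which is essentially the content of \cite{BFN10, Toe14} on derived loop stacks of quotient stacks --- the rest of the lemma falls out of the bookkeeping already supplied in the excerpt, and both claims follow simultaneously from the same pasting argument.
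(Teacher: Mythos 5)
Your proof is correct and follows essentially the same route as the paper: both arguments paste the two Cartesian squares of \eqref{equation: fixed-point scheme as diagonal} to identify the outer rectangle with the defining diagram of $\dL(G\backslash X)^{\cl}$, and then repeat the argument with derived fiber products (checked on derived affine test objects) for the second isomorphism. The paper states this pasting more tersely and does not need to invoke \cite{BFN10, Toe14} for the derived Cartesianness of the right square, since the functor-of-points identification of $\tfrac{G\times X}{G}$ extends directly to derived test rings, as you also note.
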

\begin{proof}
    As both small squares in the \eqref{equation: fixed-point scheme as diagonal} are fiber squares of classical stacks, the big rectangle is fibered also, proving the first part. 
    
    The derived version is analogous. Namely, replace $\Fix_{\frac{G}{G}}(X)$ by $\Fix^{\lL}_{\frac{G}{G}}(X)$ in \eqref{equation: fixed-point scheme as diagonal}. Then the left-hand square is fibered in derived stacks by definition; the same is true for the right-hand square by the non-derived case and flatness of the horizontal maps. We again conclude by the two-out-of three property. Also see \cite[Proposition 2.1.8]{Chen20} for related discussion (in characteristic zero).
\end{proof}

\paragraph{Compatibility with twisted products.}

\begin{lemma}\label{lemma: derived loop stack commutes with limits}
Taking $\dL(-)$ commutes with all limits in derived stacks. Taking $\dL^{\cl}(-)$ commutes with all limits in stacks.    
\end{lemma}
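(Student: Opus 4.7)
The plan is to observe that $\dL$ is, by the defining diagram \eqref{equation: definition of derived loop spaces}, a fiber product, hence a particular limit, and that this commutes with further limits simply because limits commute with limits. Concretely, suppose $\eX = \lim_i \eX_i$ is a limit in derived stacks, and take any derived affine test scheme $T$. Using the defining square \eqref{equation: definition of derived loop spaces} together with the fact that $\Maps(T,-)$ preserves limits, one has
\begin{equation*}
    \Maps(T, \dL(\eX)) \simeq \Maps(T, \eX) \times^h_{\Maps(T, \eX) \times \Maps(T, \eX)} \Maps(T, \eX),
\end{equation*}
and each $\Maps(T, \eX) \simeq \lim_i \Maps(T, \eX_i)$. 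Interchanging the fiber product with $\lim_i$ (both are limits and thus commute), the right-hand side becomes $\lim_i \Maps(T, \dL(\eX_i)) \simeq \Maps(T, \lim_i \dL(\eX_i))$, and Yoneda delivers $\dL(\eX) \simeq \lim_i \dL(\eX_i)$.

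An equivalent slick reformulation uses the identification $\dL(\eX) = \Maps(S^1, \eX)$ noted just before the lemma: this exhibits $\dL$ as right adjoint to the functor $(-) \times S^1$ (with $S^1$ the constant derived stack on the circle), and right adjoints preserve all limits. The second claim, about $\dL^{\cl}(-)$, is proved by exactly the same argument carried out one level down in classical stacks, using strict (non-derived) fiber products, ordinary affine test schemes and classical Yoneda; equivalently, one identifies $\dL^{\cl}(\eX) = \Maps(B\Z, \eX)$ in classical stacks as right adjoint to $(-) \times B\Z$.

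There is essentially no obstacle here: the assertion is purely formal and is really just the statement that limits commute with limits. The only care required is bookkeeping about the ambient category — working with homotopy fiber products in derived stacks for the first claim, and with strict fiber products in classical stacks for the second — and being consistent about the flavour of Yoneda used to pass back from functors of points to (derived or classical) stacks.
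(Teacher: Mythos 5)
Your proof is correct and takes essentially the same approach as the paper: the paper's one-line argument is that $\dL$ and $\dL^{\cl}$ are built from limit diagrams, so they commute with limits because limits commute with limits, and your functor-of-points elaboration just spells this out. The right-adjoint reformulation via $\dL(\eX) = \Maps(S^1, \eX)$ (resp.\ $\Maps(B\Z, \eX)$ classically) is a pleasant repackaging of the same fact, not a genuinely different route.
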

\begin{proof}
This is clear since $\dL$ and $\dL^{\cl}$ are defined in two steps using limit diagrams \eqref{equation: definition of derived loop spaces} in derived stacks resp. stacks.    
\end{proof}

\begin{setup}
Let $X_1$ be a $G_1$-scheme, $\rho_1: \eP_1 \to X_1$ a $G_1$-equivariant principal $G_2$-bundle. Let $X_2$ be a $G_2$-scheme.
Then we have the associated twisted product, which is again a $G_1$-scheme:
\begin{equation*}
X_1 \widetilde{\times} X_2 = \eP_1 \overset{G_2}{\times} X_2.     
\end{equation*} 
This formally amounts to the following fiber product of global quotient stacks:
\begin{equation}\label{equation: twisted product as fiber product}
    \begin{tikzcd}
        G_1 \backslash \left( X_1 \widetilde{\times} X_2 \right) \arrow[r] \arrow[d] & G_2 \backslash X_2 \arrow[d] \\
       G_1 \backslash X_1 \arrow[r, "\eP_1"] & G_2 \backslash \pt
    \end{tikzcd}
\end{equation}
\end{setup}

The next useful lemma follows easily from the loop space interpretation of fixed-point schemes (but is less obvious from the fixed-point scheme definition).

\begin{lemma}\label{lemma: fixed point schemes of twisted products}
The fixed-point stack of the twisted product fits into the fibered square
\begin{center}
\begin{tikzcd}
\Fix_{\frac{G_1}{G_1}}(X_1 \widetilde{\times} X_2) \arrow[d] \arrow[r] & \Fix_{\frac{G_2}{G_2}}(X_2) \arrow[d, "\pi_2"] \\
\Fix_{\frac{G_1}{G_1}}(X_1) \arrow[d, "\pi_1"] \arrow[r, "\rho_1"] & \frac{G_{2}}{G_{2}}  \\
\frac{G_1}{G_1}
\end{tikzcd}
\end{center}
\end{lemma}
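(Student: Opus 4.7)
The plan is to deduce this from the loop space interpretation of fixed-point schemes. First, I would rewrite the target square in terms of classical derived loop stacks using Lemma \ref{corollary: fixed-point schemes as loop stack}: each $\Fix_{\frac{G_i}{G_i}}(X_i)$ becomes $\dL(G_i\backslash X_i)^{\cl}$, and similarly $\Fix_{\frac{G_1}{G_1}}(X_1 \widetilde{\times} X_2) = \dL(G_1 \backslash (X_1 \widetilde{\times} X_2))^{\cl}$. To identify the bottom-right corner, I would apply the same lemma to $X = \pt$, which gives $\dL(BG_2)^{\cl} = \Fix_{\frac{G_2}{G_2}}(\pt) = \frac{G_2}{G_2}$ (since $\Fix_G(\pt) = G$ with adjoint action).

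Next, I would apply $\dL(-)^{\cl}$ to the fibered square \eqref{equation: twisted product as fiber product} expressing the twisted product as a fiber product of stacks. By Lemma \ref{lemma: derived loop stack commutes with limits}, $\dL(-)^{\cl}$ preserves limits of stacks, so the resulting square
\begin{equation*}
\begin{tikzcd}
\dL(G_1 \backslash (X_1 \widetilde{\times} X_2))^{\cl} \arrow[r] \arrow[d] & \dL(G_2 \backslash X_2)^{\cl} \arrow[d] \\
\dL(G_1 \backslash X_1)^{\cl} \arrow[r] & \dL(BG_2)^{\cl}
\end{tikzcd}
\end{equation*}
is still fibered. Substituting the identifications from the first paragraph yields precisely the claimed fibered square, with the bottom map induced by $\rho_1$ and the right vertical map by $\pi_2$.

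The only thing to check is that under these identifications the maps in the resulting square match the maps in the statement. The right vertical map is $\dL^{\cl}$ applied to the quotient map $G_2 \backslash X_2 \to BG_2$, which is exactly the structure map $\pi_2: \Fix_{\frac{G_2}{G_2}}(X_2) \to \frac{G_2}{G_2}$ by definition. The bottom horizontal map is $\dL^{\cl}$ applied to the classifying map $G_1 \backslash X_1 \to BG_2$ of the bundle $\eP_1$, which produces the map labelled $\rho_1$ in the statement. Since the only nontrivial inputs are Lemmas \ref{corollary: fixed-point schemes as loop stack} and \ref{lemma: derived loop stack commutes with limits}, there is no real obstacle — the only mild point is keeping track of the identification $\dL(BG_2)^{\cl} = \frac{G_2}{G_2}$ and checking that the induced maps are the right ones, which is straightforward from the functor-of-points descriptions.
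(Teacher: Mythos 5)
Your proposal is correct and takes essentially the same approach as the paper: apply the loop space functor to the fiber square \eqref{equation: twisted product as fiber product}, invoke Lemma \ref{lemma: derived loop stack commutes with limits} to preserve the pullback, and translate back via Lemma \ref{corollary: fixed-point schemes as loop stack} together with $\Fix_{\frac{G}{G}}(\pt) = \frac{G}{G}$. The one small refinement you make is applying $\dL^{\cl}(-)$ directly (using the second sentence of Lemma \ref{lemma: derived loop stack commutes with limits}) rather than applying $\dL(-)$ and then implicitly truncating; this is slightly cleaner since the statement concerns the classical fixed-point stacks, though both routes are valid.
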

\begin{proof}
Apply $\dL(-)$ to \eqref{equation: twisted product as fiber product}. The resulting square is a pullback by Lemma \ref{lemma: derived loop stack commutes with limits}. We conclude by Lemma \ref{corollary: fixed-point schemes as loop stack} together with $\Fix_{\frac{G}{G}}(\pt) =\frac{G}{G}$.   
\end{proof}

\subsection{Two viewpoints on Hochschild homology}\label{section: two viewpoints on hochschild homology}
We recall how derived fixed-point schemes relate to equivariant Hochschild homology. This interpretation has useful structural properties such as the projective bundle formula.

We first fix some standard notation. We write $\Vect(-)$ for the category of vector bundles, $\Qcoh(-)$ for the category of quasi-coherent sheaves, $\Perf(-)$ for the $\infty$-category of perfect complexes, $\D_{\Qcoh}(-)$ for the $\infty$-category of quasi-coherent sheaves. Given a group scheme $G$ over $k$, we denote the categories of $G$-equivariant objects by the superscript $(-)^G$; this matches the corresponding category on the associated quotient stack.

\paragraph{Flawless stacks.}
We write $\Ind(-)$ for the functor of ind-completion of $\infty$-categories.
The following definition is due to \cite[Definition 3.2]{BFN10}. 
\begin{definition}[Flawless stacks]
A derived algebraic stack $\eX$ over $k$ is called {\it flawless}\footnote{The authors of \cite{BFN10} call this property {\it perfect} (as it has to do with perfect complexes), but there is an unfortunate clash of terminology with the notion of a {\it perfect stack} in positive characteristic arithmetic geometry (referring to Frobenius being automorphism). Since both notions appear in this paper, we invented the word {\it flawless} to avoid confusion.} iff it has affine diagonal and the canonical comparison map gives an equivalence of $\infty$-categories
\begin{equation*}
\Ind(\Perf(\eX)) \simeq \D_{\Qcoh}(\eX).    
\end{equation*}
\end{definition}

There are many flawless stacks \cite[p.912]{BFN10}. In characteristic zero, a quotient of a quasi-projective derived scheme by an affine group scheme is flawless. 
In characteristic $p$, a quotient of a quasi-projective derived scheme by a linearly reductive group (such as $T = \Gm^r$) is flawless. 

On the other hand, reductive groups in characteristic $p$ are usually not linearly reductive, and global quotients by such are usually not flawless.

\paragraph{Categorical and geometric models for Hochschild homology.}
There are the following two models of Hochschild homology over $k$, defined for any derived stack over $k$. They agree on flawless stacks. For concreteness, we stick to the case of a global quotient $\eX = G \backslash X$. 
\begin{itemize}
    \item Take the small stable $k$-linear $\infty$-category of perfect complexes $\Perf(\eX)$. Taking the geometric realization of the $k$-linear cyclic nerve produces the algebra object
    \begin{equation*}
       HH^G(X, k) := HH(\eX, k) := HH(\Perf(\eX), k) = | N^{\cyc}_{/k}\Perf(\eX) | \in \D(k).
    \end{equation*}
    See \cite{Hoy18}, \cite[Definition 2.2.18]{Chen20}. This fits into the framework of $k$-linear localizing invariants of \cite{BGT13, HSS17}.
    \item Take the derived loop space $\mathcal{L}(\eX) = \Fix^{\lL}_{\frac{G}{G}}(X)$ and consider the algebra object given by the global sections of its structure sheaf
    \begin{equation*}
        \RG(\dL \eX, \O) = \RG(\Fix^{\lL}_{\frac{G}{G}}(X), \O) \in \D(k).
    \end{equation*}  
\end{itemize}

\begin{remark}\label{remark: HH as categorical trace}
Recall that Hochschild homology of the small stable $k$-linear $\infty$-category $\Perf(\eX)$ can be equivalently computed as the image of the monoidal identity $k$ in $\Perf(k)$ under the categorical trace in dualizable presentable stable $k$-linear $\infty$-categories given by the $\Ind$-completions: 
\begin{align*}
    \Ind (\Perf(k)) \xrightarrow{\coev} \Ind &(\Perf(\eX)) \otimes \Ind (\Perf(\eX))^{\vee} \xrightarrow{\ev} \Ind(\Perf(k)) \\
    k \qquad &\longmapsto \qquad HH(\eX, k).
\end{align*}
See \cite[\S 2.2]{Chen20}.
\end{remark}

\begin{discussion}[Comparison and agreement]
There is a natural comparison map
\begin{equation}\label{equation: comparison map for HH}
    HH(\eX, k) \xrightarrow{\comp} \RG(\dL \eX, \O)
\end{equation}
which is an equivalence when $\eX$ is flawless.

The existence of the comparison map $\varkappa$ follows from \cite[Theorem 6.5]{HSS17} -- to bootstrap this from their finer results, one needs to compose with the map from global sections of their Tannakian loop space to the global sections of the derived loop space, and furthermore with the inclusion of homotopy fixed-points to all functions.

If $\eX$ is a flawless derived algebraic stack over $k$, the comparison map $\varkappa$ is an equivalence by the discussion \cite[Example 2.2.20]{Chen20}. Strictly speaking, \cite{Chen20} works in characteristic zero, but the only input is \cite[Theorem 1.2]{BFN10} valid over any base ring $k$. (Also note that the case of qcqs $k$-schemes is classical: it follows by Zariski descent from the case of $k$-algebras.)

For reader's convenience, we recall the arguments from \cite{Chen20, BFN10} in more detail. Assume $\eX$ is flawless. By Remark \ref{remark: HH as categorical trace}, we can then compute $HH(\eX, k)$ as the image of the monoidal identity under
\begin{equation}
D_{\Qcoh}(k) \xrightarrow{\coev} D_{\Qcoh}(\eX) \otimes D_{\Qcoh}(\eX)^{\vee} \xrightarrow{\ev} D_{\Qcoh}(k).    
\end{equation}
Using \cite[Theorem 4.7 and Corollary 4.8]{BFN10}, we can identify $D_{\Qcoh}(\eX) \otimes D_{\Qcoh}(\eX)^{\vee}$ with $ D_{\Qcoh}(X \times X)$. Under this identification, coevaluation and evaluation are given by $\coev = R\Delta_* \circ Lp^*$ and $\ev = Rp_* \circ L\Delta^*$ along the correspondences
\begin{center}
\begin{tikzcd}[row sep=0em]
    & & \dL(\eX) \arrow[swap]{ld}{q_1} \arrow{rd}{q_2} & & \\
    & \eX \arrow[swap]{ld}{p} \arrow[swap]{rd}{\Delta} & & \eX \arrow{ld}{\Delta} \arrow{rd}{p} \\
  \pt & & \eX \times \eX & & \pt.  
\end{tikzcd}
\end{center}
Since all terms in this diagram are flawless by \cite[\S 3.3]{BFN10}, we can use derived base-change \cite[Proposition 3.10]{BFN10} in the middle square to get the desired identification
\begin{equation*}
HH(\eX, k) = \coev \circ \ev (k) 
= Rp_* \circ L\Delta^* \circ R\Delta_* \circ Lp^* (k)
= R(pq_2)_* \circ L(pq_1)^* (k)
= \RG(\dL(\eX), \O).
\end{equation*}
\end{discussion}

\begin{question}
    Let $G = \GL_n$ over $k=\F_p$. Take $X \in \Sch^G_k$. The stacks $\eX = G \backslash X$ are usually not flawless. For example, taking $X = \pt$, the classifying stack $BG = G \backslash \pt$ is not flawless: its structure sheaf is perfect but not compact. Is the comparison map \eqref{equation: comparison map for HH} still an isomorphism?

    To the best of our knowledge, this is an open question. The standard approach to such comparison is built on flawlessness, which does not appear to be the optimal assumption. See also \cite[Remark 2.2.21]{Chen20} for similar questions in characteristic zero.
\end{question}

\subsection{Trace maps from equivariant \texorpdfstring{$K$}{K}-theory}\label{section: trace maps from equivariant K-theory}
We now recall the trace map from equivariant algebraic $K$-theory to functions on the fixed-point scheme and its relationship to the Dennis trace map.

\paragraph{Conventions for equivariant \texorpdfstring{$K$}{K}-theory.}
Let $\Sch_k^{\qcqs}$ be the category of quasi-compact quasi-separated $k$-schemes. 
\begin{notation} Given $X \in \Sch_k^{\qcqs}$, we denote by $K(X)$ the non-connective algebraic $K$-theory spectrum of the category of perfect complexes $\Perf(X)$ on $X$ in the sense of \cite{TT90}. We denote $K_i(X)$, $i \in \Z$ its homotopy groups. 

Let $G$ be an affine algebraic group over $k$.
If $X \in \Sch_k^{\qcqs, G}$, we denote $K^G(X)$ its equivariant algebraic $K$-theory spectrum \cite{Tho88}. This is the algebraic $K$-theory spectrum of the category $\Perf^G(X)$ of $G$-equivariant perfect complexes on $X$. Equivalently, it is the $K$-theory spectrum of perfect complexes $\Perf(G \backslash X)$ on the global quotient stack $G \backslash X$. We denote by $K^G_i(X)$, $i \in \Z$ its homotopy groups.    
\end{notation}

\begin{notation}
We write $G(X)$ for the algebraic $K$-theory spectrum of cohomologically bounded pseudo-coherent complexes on $X$ \cite[Definition 3.3]{TT90}, similarly $G^G(X)$ for the equivariant version \cite{Tho88}. 
\end{notation}

\begin{notation}\label{notation: KH}
We denote $KH(X)$ the homotopy $K$-theory spectrum in the sense of \cite{Wei89} and $KH^G(X)$ its equivariant version defined via the simplicial spectrum $K^G(\Delta^{\bullet} \times X)$.
\end{notation}

\begin{notation}\label{notation: K-theory with coefficients}
We write $\otimes_R$ for tensor product over $R$. We denote $K_i(X)_k := K_i(X) {\otimes}_{\Z} k$.
We write $\otimes^{\lL}_R$ for derived tensor product over $R$ and $\otimes^i_R := \Tor^R_i(-, -)$ for the Tor-groups. 

Given a coefficient ring $\Lambda$, we denote $K(X, \Lambda)$ the $K$-theory spectrum with coefficients in $\Lambda$. Taking $\Lambda = \F_p$, this has homotopy groups
\begin{equation*}
    K_i(X, \F_p) \cong \left(K_i(X) {\otimes}^0_{\Z} \F_p \right) \oplus \left(K_{i-1}(X) {\otimes}^1_{\Z} \F_p \right).
\end{equation*}
\end{notation}

\paragraph{The equivariant trace map in degree zero.}
Let $X \in \Sch^{\qcqs, G}_k$ and $\eE \in \VB^G(X)$ be a $G$-equivariant vector bundle on $X$. Let $x \in \Fix_g(X)(k)$ be a fixed point of a given $g \in G(k)$. Then $g$ acts on the fiber $\eE_x$ and we may take the trace $\tr_g(\eE_x) \in k$. This idea plays out as follows.

\begin{construction}\label{construction: trace map in degree zero}
Let $X \in \Sch^{\qcqs, G}_k$. Then we define the ring homomorphism
\begin{equation}\label{definition: trace map in the group case}
\begin{gathered}
    \tr : K^G_0(X)  \xrightarrow{} H_0(\Fix_{\frac{G}{G}}(X), \O) \\
    [\eE] \mapsto \left( (g, x) \mapsto \tr_{g} (\eE_x) \right)
\end{gathered}
\end{equation}

Indeed, let $\eE \in \Perf^G(X)$. Pick an affine test scheme $\Spec R \in \Sch_k$ and let $(g, x) \in \Fix_G(X)(R)$ be an $R$-valued point. 
Then the pullback $\eE_x \in \Perf(R)$ is strictly perfect -- it is represented by a bounded complex of projective $R$-modules acted on by $g$. Passing to an affine Zariski cover of $\Spec R$, we may even assume all of its terms are free.
Then one can take the usual trace of $g$ on this complex
$$\tr_{(g,x)} \eE := \sum_i (-1)^i \tr_g \eE_{x, i}$$
and obtain a global function on $\Spec R$.

This association is clearly additive: given a short exact sequence $0 \to \eE' \to \eE \to \eE'' \to 0$ in $\Perf^G(X)$, one sees $\tr_{(g,x)} \eE = \tr_{(g,x)} \eE' + \tr_{(g,x)} \eE''$.
Moreover, if $h: \eE' \to \eE$ is a quasi-isomorphism in $\Perf^G(X)$, then $\tr_{(g, x)} \eE' = \tr_{(g, x)} \eE$. Indeed, the mapping cone $\eE'' := \cone(h) \in \Perf^G(X)$ of $h$ is acyclic, hence it suffices to see that its trace is zero by additivity. This can be easily seen by induction on the degrees in which $\eE''$ is nonzero by splitting off the top degree: one can always produce a short exact sequence $0 \to \eG' \to \eE'' \to \eG'' \to 0$ in $\Perf^G(X)$ where $\eG'$ is supported in a shorter interval and $\eG''$ is of the form $(\dots \to 0 \to R^{\oplus a} \xrightarrow{=} R^{\oplus a} \to 0 \dots )$.

Altogether, this assembles into the trace map \eqref{definition: trace map in the group case}: we land in algebraic functions, as we can test on arbitrary $R$-points of $\Fix_G(X)$. The resulting function is also $G$-invariant, as taking traces is invariant under the adjoint action of $G$ on itself. Finally, the trace factors through $K^G_0(X)$ by \cite[\S 1.5.6]{TT90} and the above.    
\end{construction}

\begin{observation}
The trace map $\tr$ is a homomorphism of rings.
\end{observation}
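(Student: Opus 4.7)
The plan is to verify the two ring axioms---preservation of the multiplicative unit and of multiplication---by evaluating at arbitrary $R$-points of $\Fix_G(X)$, in the spirit of Construction \ref{construction: trace map in degree zero}.

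First, I would check that $\tr([\O_X]) = 1$. The unit of $K^G_0(X)$ is the class of $\O_X$ with its canonical equivariant structure. For any $(g, x) \in \Fix_G(X)(R)$, the pullback $\O_{X, x}$ is the free rank-one module $R$ on which $g$ acts trivially, so $\tr_{(g,x)}(\O_X) = 1$, which matches the unit of $H_0(\Fix_{\frac{G}{G}}(X), \O)$.

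Next, for multiplicativity, I would take $[\eE], [\eF] \in K^G_0(X)$ and verify pointwise that
\begin{equation*}
\tr_{(g,x)}(\eE \otimes^{\lL} \eF) \;=\; \tr_{(g,x)}(\eE) \cdot \tr_{(g,x)}(\eF).
\end{equation*}
Passing to an affine Zariski cover of $\Spec R$ where $\eE_x$ and $\eF_x$ are represented by bounded complexes of free $R$-modules, their derived tensor product is just the total complex of the termwise tensor product, with $g$ acting diagonally on each summand $\eE^i_x \otimes_R \eF^j_x$. The identity then reduces to the elementary computation
\begin{equation*}
\sum_n (-1)^n \sum_{i+j=n} \tr_g\!\bigl(\eE^i_x \otimes_R \eF^j_x\bigr) \;=\; \Bigl(\sum_i (-1)^i \tr_g \eE^i_x\Bigr)\Bigl(\sum_j (-1)^j \tr_g \eF^j_x\Bigr),
\end{equation*}
which follows from the Cauchy-product expansion together with the termwise identity $\tr_g(V \otimes_R W) = \tr_g(V)\cdot\tr_g(W)$ for free modules with commuting endomorphisms.

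I expect no serious obstacle: the proof is essentially bookkeeping with the standard trace identity for tensor products. The only mild point to verify is that the chosen strictly perfect representatives are compatible with the equivariant structure and that the outcome is independent of these choices; both are handled by the quasi-isomorphism invariance and additivity already established in Construction \ref{construction: trace map in degree zero}, so one only needs the computation on free modules to conclude.
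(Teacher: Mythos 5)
Your proof is correct and follows the same approach as the paper, which simply notes that trace of a tensor product is the product of traces; you have spelled out the pointwise verification (unit, Cauchy-product expansion for complexes, reduction to free modules) that the paper leaves implicit.
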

\begin{proof}
The trace of a tensor product is the product of traces of factors.    
\end{proof}
This observation is essential for our applications: we can describe rings of functions on interesting spaces via equivariant $K$-theory. Moreover, it explains why we need equivariant $K$-theory (and not $G$-theory, which doesn't have a natural ring structure). In fact, the above construction of the trace map would not even work for $G$-theory.

\begin{remark}
Since $X$ is defined over $k$, the trace extends to
\begin{equation*}\label{equation: naive k-linear trace map}
   \tr: K^G_0(X)_k \xrightarrow{} H_0(\Fix_{\frac{G}{G}}(X), \O).
\end{equation*}
See Discussion \ref{discussion: traces from K-theory to HH} for a finer version.
Reformulating, the trace map amounts to a morphism
\begin{equation}
    \tr_X: \Fix_{\frac{G}{G}}(X) \to \Spec K^G_0(X)_k
\end{equation}
which factors through the affinization of the left-hand side.
\end{remark}

\paragraph{The Dennis trace map.}
The above-discussed degree zero definition lifts to the level of spectra, and even factors through finer versions of Hochschild homology. This is a classical construction in algebraic $K$-theory. For explicit formulas, see \cite{McC94, KM00} or the encyclopedic treatments \cite{Mad95, DGM12}. Also see \cite{BGT13} for a modern approach via localizing invariants. It was further interpreted geometrically in terms of derived loop stacks \cite{BFN10, Toe14, HSS17, AGR17}.  We recall the simplest version sufficient for our purposes.

\begin{recollection}\label{recollection: localizing invariants}
Let $\Cat^{\ex}_k$ be the $\infty$-category of small, stable, idempotent complete, $k$-linear $\infty$-categories and exact functors. We will use the notion of localizing invariants $\E(-)$ on $\Cat^{\ex}_k$ in the sense of \cite[\S4--5]{HSS17}. The following are notable examples relevant to this paper.
\begin{itemize}
    \item[(i)] non-connective algebraic $K$-theory $K(-)$ and its version with coefficients,
    \item[(ii)] homotopy $K$-theory $KH(-)$ and its version with coefficients,
    \item[(iii)] Hochschild homology $HH(-, k)$ relative to $k$.
\end{itemize}
In fact, $K(-)$ is a universal localizing invariant, see \cite{HSS17, BGT13}. The case of $KH(-)$ follows formally, see \cite{LT19}. The versions with coefficients are also clear. Finally, $HH(-, k)$ is a localizing invariant on $\Cat^{\ex}_k$ in the sense of \cite{HSS17} by the discussion in \cite[p.914 after 3.8]{LT19}.
\end{recollection}

\begin{notation}
We evaluate localizing invariants on derived stacks through their $\infty$-categories of perfect complexes $\Perf(-)$: given a derived stack $\eX$ over $k$ and a localizing invariant $\E$, we denote
\begin{equation*}
    \E(\eX) := \E(\Perf(\eX)).
\end{equation*}
Given $X \in \Sch^{\qcqs, G}_k$, we denote $\E^G(X) := \E(G\backslash X) = \E(\Perf(G \backslash X)) = \E(\Perf^G(X))$.
\end{notation}

\begin{discussion}[Traces]\label{discussion: traces from K-theory to HH}
There are the following trace maps:
\begin{equation}\label{equation: dennis trace map}
    \begin{tikzcd}
        K^G(X, k) \arrow[bend left=15, rr, "\tr_{\geom}"] \arrow[swap]{r}{\tr_{\cat}} & HH^G(X, k) \arrow[swap]{r}{\comp} & \RG(\Fix^{\lL}_{\frac{G}{G}}(X), \O).
    \end{tikzcd}
\end{equation}
The categorical trace $\tr_{\cat}$ exists as $K^G(X, k)$ and $HH^G(X, k)$ fit in the framework of $k$-linear localizing invariants of \cite{BGT13, HSS17}. 
The geometric trace $\tr_{\geom} = \tr_{\geom} \circ \comp$ extends the degree zero definition presented above.    
\end{discussion}

\begin{remark}
This gives a natural map $K^G_0(X, k) \to H_0(\Fix^{\lL}_{\frac{G}{G}}, \O)$ extending Remark \ref{equation: naive k-linear trace map}.
\end{remark}

\begin{recollection}\label{recollection: semiorthogonal decompositions and localizing invariants}
Localizing invariants are in particular additive \cite[Definitions 5.11 and 5.16]{HSS17} or \cite[Definition 2.6]{Kha18}. They send semi-orthogonal decompositions to direct sums by \cite[Lemma 2.8]{Kha18}.

In particular, this is the case for $K(-)$, $KH(-)$, $K(-, k)$ and $HH(-, k)$; such direct sum decompositions are compatible with the trace map.
\end{recollection}

\paragraph{Local constancy along fibers.}
The $T$-equivariant trace map from Construction \ref{construction: trace map in degree zero} lands in functions which are locally constant along fibers. Therefore, it can be close to isomorphism only in situations when all function of $\Fix_{\frac{T}{T}}(X)$ have this property, e.g. when $X$ is proper. We include this observation for context. 

\begin{notation}
Let $S$ be a base scheme and $Z \in \Sch_S$ with structure map $\pi: Z \to S$. A function $f \in H_0(Z, \O)$ is called {\it locally constant relatively to $S$} if it Zariski locally on $Z$ comes as a pullback of a function on $S$ along $\pi$.
\end{notation}

\begin{lemma}\label{lemma: trace functions are locally constant on fibers for T}
Let $k$ be a base field, $T$ a torus, $X \in \Sch^{\qcqs, T}_k$. Then the image of $\tr: K^T_0(X)_k \to H_0(\Fix_T(X), \O)$ lands in functions which are locally constant relatively to $T$.
\end{lemma}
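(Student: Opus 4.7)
The plan is to reduce the claim to the case of $T$-equivariant line bundles via the splitting principle, and then handle the line-bundle case explicitly using the weight decomposition induced by the $T$-action.

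First, since the trace is additive on short exact sequences and vanishes on acyclic complexes, writing a perfect complex Zariski-locally as a bounded complex of vector bundles reduces us to the case that $\eE$ is a $T$-equivariant vector bundle. Lemma~\ref{lemma: equivariant splitting principle} then produces a $T$-equivariant iterated projective bundle $f \colon Y \to X$ along which $K^T_0(X)_k \to K^T_0(Y)_k$ is injective and $f^*[\eE] = \sum_i [\eL_i]$ for $T$-equivariant line bundles $\eL_i$ on $Y$. By naturality of the trace construction, $f^*(\tr \eE) = \sum_i \tr \eL_i$ on $\Fix_T(Y)$, so it would suffice to understand $\tr \eL$ for a single $T$-equivariant line bundle.

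Second, I would handle line bundles directly. The trace assigns to $(g, y) \in \Fix_T(Y)$ the scalar by which $g \in \Stab_T(y)$ acts on the fiber $\eL_y$. On a $T$-invariant affine open $W \subseteq Y$ (which exists near any point by Sumihiro-type results for torus actions), the graded invertible $\Gamma(W, \O)$-module corresponding to $\eL|_W$ is, after further localization, generated by a single homogeneous element of some weight $\chi \colon T \to \Gm$, giving an equivariant isomorphism $\eL|_W \cong \O_W(\chi)$. On the corresponding open $\Fix_T(W) \subseteq \Fix_T(Y)$ the trace then reads $(g, y) \mapsto \chi(g)$, manifestly the pullback of $\chi$ from $T$.

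The main obstacle is descending ``locally constant relative to $T$'' from $\Fix_T(Y)$ back to $\Fix_T(X)$, since $f \colon \Fix_T(Y) \to \Fix_T(X)$ is proper but neither flat nor smooth, with fiber dimensions jumping in $g$. I expect to circumvent this with a direct local argument on $X$ that avoids the splitting principle at the last step: given $(g_0, x_0) \in \Fix_T(X)$, set $H_0 := \Stab_T(x_0)$ and consider the closed $T$-invariant subscheme $X^{H_0}$ on which $H_0$ acts trivially. Diagonalizability of $H_0$ yields a canonical decomposition $\eE|_{X^{H_0}} = \bigoplus_\chi \eE^{(\chi)}$ indexed by characters of $H_0$, with locally constant ranks $m_\chi$. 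For $(g, x) \in \Fix_T(X)$ in a small enough Zariski neighborhood of $(g_0, x_0)$, the $\Stab_T(x)$-weight decomposition of $\eE_x$ is forced to coarsen the $H_0$-decomposition of $\eE_{x_0}$ with matching total multiplicities; after extending each character $\chi$ of $H_0$ to a character $\tilde\chi$ of $T$ this gives $\tr_g \eE_x = \sum_\chi m_\chi \tilde\chi(g)$, which is a pullback from $T$ as required.
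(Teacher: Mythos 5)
You recognize correctly that the splitting-principle route stalls at your step 4: the resolution $f \colon \Fix_T(Y) \to \Fix_T(X)$ is proper but not flat and admits no section, and injectivity of $f^*$ on $K^T_0$ gives you nothing about whether the ring-theoretic property ``locally constant relative to $T$'' descends along $f^*$ on global functions. Your step 5 is meant to bridge this, but it rests on two unproved assertions that are not automatic. First, the formula $\tr_g \eE_x = \sum_\chi m_\chi\,\tilde\chi(g)$ depends on the choice of lifts $\tilde\chi$ of characters $\chi$ of $H_0$ to characters of $T$; the lifts that make it match at $(g_0, x_0)$ are not forced to keep matching over a whole neighborhood once $\Stab_T(x)$ drops strictly below $H_0$, because the ambiguity in $\tilde\chi$ only vanishes on $H_0$ itself. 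Second, the assertion that the $\Stab_T(x)$-weight multiplicities of $\eE_x$ ``coarsen'' those of $\eE_{x_0}$ with matching totals compares fibers of a $T$-equivariant bundle at distinct, non-comparable points and is not a formal consequence of semicontinuity of stabilizers; you would need an actual degeneration argument along the orbit closure. There is also a smaller gap in step 3: Sumihiro-type existence of $T$-invariant affine opens needs $X$ (hence $Y$) normal or quasi-projective, while the lemma is stated for all $X \in \Sch^{\qcqs, T}_k$.

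The paper's proof avoids the splitting principle entirely and is shorter. It works on a basic affine open $S = \Spec R$ of $\Fix_T(X)$ with its tautological $R$-point $(t, x)$, observes that the pullback $\eE_x$ is a $\Z^d$-graded finite projective $R$-module on which $t = (r_1,\dots,r_d)$ acts through the grading, and reads off $\tr_t \eE_x = \sum_{i\in\Z^d} (\rank M_i)\, r^i$, visibly a $\Z$-linear combination of Laurent monomials pulled back along $t\colon S\to T$. Everything stays on $\Fix_T(X)$, so no descent from a resolution is needed.
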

\begin{proof}

Let $\eE \in \Perf^T(X)$ be a perfect complex and $\tr \eE \in H_0(\Fix_T(X), \O)$ the corresponding function.
We only need to prove the claim Zariski locally on $\Fix_T(X)$.
Fix a test algebra $R \in \Alg_k$ with $S = \Spec R$ and let $(t, x) \in \Fix_T(X)(R)$. Such $(t, x)$ form a basis of the Zariski topology on $\Fix_T(X)$, so it is enough to see that $\tr_t(\eE)$ is a pullback of a function along the projection $\Fix_T(X) \to T$.
\begin{equation*}
    \begin{tikzcd}
        S \arrow[r, "{(t, \, x)}"] \arrow{rdd}[swap]{t} & \Fix_T(X) \arrow[d] \\
        & T \times X \arrow[d] \\
        & T
    \end{tikzcd}
\end{equation*}
But $\eE_{(t, x)}$ is a $T$-equivariant perfect complex on $S$ and we may without loss of generality assume it is a vector bundle. Restricting to a single connected component of $S$, we may further assume that $\eE_{(t, x)}$ has constant rank. Equivalently, it is a $\Z^d$-graded projective module $M = \bigoplus_{i \in \Z^d} M_i \cdot t^i$ with each $M_i$ projective of constant rank. After choosing an isomorphism $T \cong \mathbb{G}_m^d$, the map $t$ identifies with a $d$-tuple $(r_1, \dots, r_d)$ of invertible elements in $R$. With this notation, the trace of $M$ at the given $t = (r_1, \dots, r_d) \in T(R)$ is given by $\sum_{i \in \Z^d} (\rank M_i) \cdot r^i$ where $r^i := r_1^{i_1} \cdots r_d^{i_d}$. 
Hence the function $\tr_t \eE_x$ on $S$ indeed comes as a pullback along $t: S \to T$.
Since this holds for any $(t, x)$, we have proved the lemma for $T$.
\end{proof}

\begin{corollary}\label{lemma: trace functions are locally constant on fibers for G}
For a reductive group $G$ and $X \in \Sch_k^{\qcqs, G}$, the image of $\tr: K^G_0(X)_k \to H_0(\Fix^{\red}_G(X), \O)$ lands in functions which are locally constant relatively to $G$.
\end{corollary}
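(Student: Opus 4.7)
The plan is to reduce to the torus case of Lemma \ref{lemma: trace functions are locally constant on fibers for T} by exploiting $G$-invariance of the trace together with Chevalley restriction. First I would fix a maximal torus $T \subseteq G$ with Weyl group $W = N_G(T)/T$. The inclusion $T \hookrightarrow G$ induces a closed immersion $i \colon \Fix^{\red}_T(X) \hookrightarrow \Fix^{\red}_G(X)$ (by Proposition \ref{proposition: open and closed immersions}) along which $i^* \tr_G(\eE) = \tr_T(\eE|_T)$ for any $\eE \in \Perf^G(X)$; the preceding lemma then describes this restriction as Zariski-locally a pullback $\pi_T^* f$ of a Laurent polynomial $f$ on $T$.

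Next I would exploit that $\tr_G(\eE)$ is $G$-invariant on $\Fix^{\red}_G(X)$ (since trace is conjugation-invariant) and that $N_G(T)$ preserves $T$ under conjugation: this forces $\tr_T(\eE|_T)$ to be $W$-invariant on $\Fix^{\red}_T(X)$, so the local Laurent polynomials $f$ above may be chosen $W$-invariant. By classical Chevalley restriction, each such $W$-invariant regular function on $T$ extends uniquely to a conjugation-invariant regular function $\widetilde f$ on $G$. The core identity $\tr_G(\eE) = \pi_G^* \widetilde f$ on the preimage of the regular semisimple locus $G^{\mathrm{rss}} \subseteq G$ would follow from the Cartesian square
\begin{equation*}
\begin{tikzcd}
G \times \Fix^{\red}_T(X) \arrow[r, "\Phi"] \arrow[d] & \Fix^{\red}_G(X) \arrow[d, "\pi_G"] \\
G \times T \arrow[r, "\psi"] & G
\end{tikzcd}
\end{equation*}
where $\Phi(h, (t, y)) = (hth^{-1}, hy)$ and $\psi(h, t) = hth^{-1}$: the $G$-invariance yields $\Phi^* \tr_G(\eE) = \pr_{\Fix^{\red}_T(X)}^* \tr_T(\eE|_T)$, and since $\psi$ is smooth and generically a $W$-quotient over $G^{\mathrm{rss}}$, the extension $\widetilde f$ encodes precisely this data. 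Concretely, for $(g, x) \in \pi_G^{-1}(G^{\mathrm{rss}})$ with $g = hth^{-1}$ and $x = hy$ for some $(t, y) \in \Fix_T(X)$, $G$-invariance gives $\tr_G(\eE)(g, x) = \tr_T(\eE|_T)(t, y) = f(t) = \widetilde f(g)$.

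The main obstacle I anticipate is extending the description beyond the regular semisimple locus, so as to obtain a Zariski-local (not merely étale-local) pullback at every point of $\Fix^{\red}_G(X)$. I would address this using the classical identity $\tr_g = \tr_{g_s}$ for the semisimple part $g_s$ in the Jordan decomposition of $g$: this reduces the value of $\tr_G(\eE)$ at any $(g, x)$ to the value at $(g_s, x)$, which already lies in the semisimple locus handled above. Combined with the genuine Zariski regularity of the Chevalley extension $\widetilde f$ on all of $G$ (not merely on $G^{\mathrm{rss}}$) and the reducedness of $\Fix^{\red}_G(X)$, this should yield the full claim.
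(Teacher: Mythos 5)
Your argument is genuinely different from the paper's. The paper works pointwise with the commutative subgroup scheme $C_g \subseteq G$ generated by the given $g$: it factors $C_g$ as an extension $1 \to U_g \to C_g \to T_g \to 1$ of its maximal torus by its unipotent radical via Jordan decomposition, notes that the trace only depends on the torus part $T_g$, and applies the torus case to $T_g$. This sidesteps the choice of a fixed maximal torus $T$ and requires no Chevalley restriction or conjugation square.

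There is a concrete gap in your step asserting that ``the local Laurent polynomials $f$ above may be chosen $W$-invariant.'' The $N_G(T)$-invariance of the trace function on $\Fix_T(X)$ only relates $f$ on a chart $U$ to its twist $w\cdot f$ on the translated chart $wU$; it does \emph{not} force $f$ itself to be $W$-symmetric as a Laurent polynomial on $T$ unless $wU = U$, which generically fails. For a simple instance take $G=\GL_2$, $X=\P^1$, $\eE=\O(1)$: near a point $(t_0,[1:0])\in\Fix_T(\P^1)$ with $t_0$ regular, the trace function is $t_1^{-1}$, so $f=t_1^{-1}$, which is not $W$-invariant — its $W$-translate $t_2^{-1}$ governs the disjoint chart near $(wt_0w^{-1},[0:1])$. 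A $W$-average both needs $|W|$ invertible in $k$ and in any case only pins down the value of $f$ over $\pi_T(U)$, leaving the Chevalley extension $\widetilde f$ undetermined on the relevant neighborhood in $G$. Without a well-defined $\widetilde f$, the core identity $\tr_G(\eE)=\pi_G^*\widetilde f$ cannot be set up, and the subsequent steps (the conjugation square, Jordan reduction to the semisimple locus, the density argument on the reduced scheme) have nothing to hang on. The paper's proof avoids all of this precisely because it never asks for a single global function on $T$ to be extended to $G$: the torus $T_g$ is re-chosen at each point, and only the character of the abelian group $C_g$ acting on a single fiber $\eE_{(g,x)}$ is used.
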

\begin{proof}
Let $\eE \in \Perf^G(X)$ and pick any $k$-valued point $(g, x)$ of $\Fix_G(X)$.
Then $\eE_{(g, x)}$ is equivariant for the whole commutative subgroup scheme $C_g$ of $G$ over $k$ generated by $g$. This sits in a short exact sequence between its unipotent radical and maximal torus:
\begin{equation*}
    1 \to U_g \to C_g \to T_g \to 1
\end{equation*}
The trace does not depend on the unipotent part of $g$, while the torus case was handled above. The lemma follows.
\end{proof}

\subsection{Projective bundle formulas}
We now recall the projective bundle formula and its more elaborate variants. This is essential for controlling algebraic $K$-theory. All of these results ultimately rely on suitable semi-orthogonal decompositions.

\paragraph{Projective bundle formula and consequences.}
Since both $K$-theory and Hochschild homology are localizing invariants, they automatically satisfy the equivariant projective bundle formula going back to \cite{TT90, Tho88}.

\begin{proposition}\label{proposition: projective bundle formula for localizing invariants}
Let $G$ act on $X$. Let $\eE \in \VB^G(X)$ be a $G$-equivariant vector bundle of rank $r$ and $\P(\eE)$ its projectivization. Let $\E(-)$ be a localizing invariant. Then $\E^G(-)$ satisfies the projective bundle formula -- there is a natural equivalence
\begin{equation}\label{equation: projective bundle formula}
\E^G(\P(\eE)) \simeq \bigoplus_{i=0}^{r-1} \E^G(X)
\end{equation}
compatible with natural maps between localizing invariants.
\end{proposition}
\begin{proof}
In the view of Recollection \ref{recollection: semiorthogonal decompositions and localizing invariants}, localizing invariants satisfy the projective bundle formula by the semi-orthogonal decomposition \cite[Theorem 3.3 and Corollary 3.6]{Kha18}.    
\end{proof}

One consequence is the well-known splitting principle.
\begin{lemma}\label{lemma: equivariant splitting principle}
Let $c \in K^G_0(X)$ be the class of a $G$-equivariant vector bundle $\eE$ on $X$ of rank $n$. Then there exists a $G$-equivariant morphism $f: Y \to X$ such that $f^*: K^G_0(X) \to K^G_0(Y)$ is injective and $f^*[\E] = \sum_i^n [\eL_i]$ for some $G$-equivariant line bundles $\eL_i$ on $Y$. Similarly for $K^G_0(-, k)$.
\end{lemma}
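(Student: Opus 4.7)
The plan is to iterate the projective bundle formula from Proposition \ref{proposition: projective bundle formula for localizing invariants}, repeatedly splitting off a line subbundle from the pullback of $\eE$. This is the classical splitting principle, now automatic in the $G$-equivariant setting because we work entirely with $G$-equivariant objects and localizing invariants.

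First, consider the $G$-equivariant projectivization $p_1 : \P(\eE) \to X$. On $\P(\eE)$ there is the tautological short exact sequence of $G$-equivariant bundles
\begin{equation*}
    0 \to \eO_{\P(\eE)}(-1) \to p_1^* \eE \to \eQ_1 \to 0,
\end{equation*}
so in $K_0^G(\P(\eE))$ we have $p_1^*[\eE] = [\eO_{\P(\eE)}(-1)] + [\eQ_1]$, where $\eQ_1$ is a $G$-equivariant vector bundle of rank $n-1$. By Proposition \ref{proposition: projective bundle formula for localizing invariants} applied to the localizing invariant $K(-)$, the pullback $p_1^* : K_0^G(X) \to K_0^G(\P(\eE))$ is a direct summand inclusion of abelian groups; in particular it is injective.

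Next, I iterate: set $X_1 := \P(\eE)$ and $\eE_1 := \eQ_1$, and form $p_2 : X_2 := \P(\eQ_1) \to X_1$ with its tautological sequence splitting off a further $G$-equivariant line bundle from $p_2^* \eE_1$. After $n-1$ steps one obtains a tower of $G$-equivariant projective bundles
\begin{equation*}
    Y := X_{n-1} \xrightarrow{p_{n-1}} X_{n-2} \xrightarrow{} \cdots \xrightarrow{p_2} X_1 \xrightarrow{p_1} X
\end{equation*}
whose total pullback $f := p_1 \circ \cdots \circ p_{n-1}$ is a composition of $G$-equivariant projectivizations, so $f^* : K_0^G(X) \to K_0^G(Y)$ is injective (composition of split injections), and by construction
\begin{equation*}
    f^*[\eE] = \sum_{i=1}^{n} [\eL_i] \qquad \text{in } K_0^G(Y),
\end{equation*}
where the $\eL_i$ are the $G$-equivariant line bundles peeled off at each stage together with the final rank-one bundle $\eQ_{n-1}$.

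The $k$-coefficient version is formally identical: Proposition \ref{proposition: projective bundle formula for localizing invariants} applies to the localizing invariant $K(-,k)$ as well, so each $p_i^*$ is still split injective on $K_0^G(-,k)$ and the same tower works. The only conceptual point that deserves attention is the $G$-equivariance of the tautological sequence on $\P(\eE)$, but this is immediate because $\P(\eE)$ is constructed from the $G$-equivariant $\eE$ and $\eO_{\P(\eE)}(-1)$ inherits its $G$-structure tautologically. There is no real obstacle here; the only thing one could worry about is that the splittings produced by the projective bundle formula are compatible with the trace map, but since Proposition \ref{proposition: projective bundle formula for localizing invariants} delivers an honest direct sum decomposition (not just an isomorphism of $K_0$-groups), injectivity is automatic.
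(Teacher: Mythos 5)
Your proof is correct and follows essentially the same approach as the paper: iterated $G$-equivariant projectivization, splitting off the tautological line bundle at each stage, with injectivity of pullback coming from the projective bundle formula (Proposition \ref{proposition: projective bundle formula for localizing invariants}).
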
    
\begin{proof}
Such $Y$ can be constructed by induction on $n$. Let $Y_0:= X$, $\E_0 := \E$. Consider $Y_1 = \P(\E_0)$ with its canonical map $f_1: Y_1 \to Y_0$. The tautological line bundle $\eL_1$ on $Y_1$ is a $G$-equivariant subbundle of $f_1^*\E_0$. The quotient $\E_1 := f^*\E_0 / \eL_1$ is a rank $(n-1)$ bundle on $Y_1$ and we have $f_1^*[\E] = [\eL_1] + [\E_1]$. Moreover, the pullback $f^*: K^G_0(Y_0)_k \to K^G_0(Y_1)_k$ is injective by the projective bundle theorem. We can continue by induction.
\end{proof}

The following standard trick extends the projective bundle formula to flag varieties (alternatively, one can deduce this from a suitable semi-orthogonal decomposition).
\begin{discussion}\label{discussion: partial flag variety bundles}
Take $d \in \mathbb{N}$, let $\mu$ be a dominant cocharacter of the diagonal torus $T \leq \GL_d$ and consider the corresponding Levi subgroup $L_{\mu}$ with block sizes $(d_1, \dots, d_l)$. 

Let $X \in \Sch_k^{\qcqs, G}$ and $\eE \in \VB^G(X)$. Let $Y = \Flag_{X}(\eE, \mu)$ be the associated partial flag variety bundle of type $\mu$ over $X$, whose fibers parametrize flags of vector bundle quotients of $\eE$ with successive graded pieces of dimensions $(d_1, \dots, d_{l-1})$ -- for example, if $\mu = \omega_j$ is the $j$-th fundamental coweight, then $\Flag_X(\eE, \omega_j) = \Grass_X(\eE, j)$ is the relative Grassmannian of rank $j$ quotients of $\eE$.
Take $W = \Flag_X(\eE)$ to be the corresponding full flag variety bundle. Then we have natural maps
\begin{equation}\label{equation: partial flag variety bundles}
    \begin{tikzcd}
        W \arrow[bend left=20, rr, "h"] \arrow[swap]{r}{g} & Y \arrow[swap]{r}{f} & X
    \end{tikzcd}
\end{equation}
and both $g$ and $h$ factor as towers of equivariant projective bundles. 
\end{discussion}

\begin{corollary}\label{corollary: classical trace and projective bundles}
Let $X \in \Sch_k^{\qcqs, G}$ and $\eE \in \VB^G(X)$. Let $Y = \Flag_{X}(\eE, \mu)$ be the associated partial flag variety bundle. Let $E(-)$, $E'(-)$ be localizing invariants and $\tau: E(-) \to E'(-)$ a natural map between them. Pick $i \in \Z$.  Then the following are equivalent:
\begin{itemize}
    \item[(i)] $\tau_i: E^G_i(X) \to E'^G_i(X)$ is an isomorphism,
    \item[(ii)] $\tau_i: E^G_i(Y) \to E'^G_i(Y)$ is an isomorphism.
\end{itemize}
\end{corollary}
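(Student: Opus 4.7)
The plan is to reduce both implications to the equivariant projective bundle formula (Proposition \ref{proposition: projective bundle formula for localizing invariants}) applied to the two tower decompositions recorded in Discussion \ref{discussion: partial flag variety bundles}. The key observation is that passing to the full flag variety $W = \Flag_X(\eE)$ puts $Y$ and $X$ on equal footing: both $g: W \to Y$ and $h = f \circ g: W \to X$ factor as towers of $G$-equivariant projective bundles.

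First I would iterate Proposition \ref{proposition: projective bundle formula for localizing invariants} along these towers to obtain direct sum decompositions
\begin{equation*}
    E^G(W) \cong \bigoplus E^G(Y) \qquad \text{and} \qquad E^G(W) \cong \bigoplus E^G(X),
\end{equation*}
where the inclusion of a distinguished summand is realized by $g^*$, resp.\ $h^*$, and analogously for $E'^G$. The ``compatible with natural maps between localizing invariants'' clause of Proposition \ref{proposition: projective bundle formula for localizing invariants} then upgrades this to compatibility with $\tau$: the map $\tau: E^G(W) \to E'^G(W)$ is identified with a direct sum of copies of $\tau: E^G(X) \to E'^G(X)$ via the second decomposition, and similarly for $Y$. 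Passing to $i$-th homotopy groups preserves all of this.

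At this point the equivalence becomes a formal matter of passing to summands. If (i) holds, then $\tau_i: E^G_i(W) \to E'^G_i(W)$ is a direct sum of isomorphisms, hence itself an isomorphism; restricting to the distinguished $Y$-summand along $g^*$ gives (ii). The converse is symmetric, going through the $X$-summand along $h^*$.

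The one step I expect to need genuine care is this compatibility: one must verify that $\tau$ intertwines the pullback map with inclusion of the distinguished summand, not merely some abstract direct sum splitting. This is automatic from Recollection \ref{recollection: semiorthogonal decompositions and localizing invariants} -- the underlying semi-orthogonal decomposition of $\Perf^G(\P(\eE))$ is defined at the level of stable $\infty$-categories, with the distinguished summand being the essential image of $\pi^*$, so any natural transformation between localizing invariants automatically respects it summand-by-summand.
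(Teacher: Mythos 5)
Your proof is correct and follows the same route as the paper's: establish the two-out-of-three style statement for projective bundles via the projective bundle formula, then apply it along the two towers $g: W \to Y$ and $h: W \to X$ from Discussion \ref{discussion: partial flag variety bundles} to transfer the isomorphism through the common cover $W$. The paper states this more tersely; your version spells out the direct-sum bookkeeping and the compatibility with $\tau$, but the underlying argument is the same.
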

\begin{proof}
For projective bundles, this follows immediately from  \eqref{equation: projective bundle formula}.  Applying the case of projective bundles multiple times for the bundles appearing in the factorizations of $g$ and $h$ from \eqref{equation: partial flag variety bundles}, we deduce the result for $f$.
\end{proof}

\begin{remark}
We will later apply Corollary \ref{corollary: classical trace and projective bundles} to
\begin{itemize}
    \item the trace map $\tr: K^G(-, k) \to HH^G(-, k)$,
    \item the natural map $K^G(-) \to KH^G(-)$.
\end{itemize}
To phrase the first case in words: the trace map $\tr: K^G(-, k) \to HH^G(-,k)$ is an isomorphism for $X$ if and only if it is an isomorphism for $Y$.
\end{remark}

\paragraph{Stratified Grassmannian bundles.}
We will employ a similar statement for {\it stratified} projective bundles. To make this work, we need to use their derived enhancements studied in \cite{Jia22a, Jia22b, Jia23}.\footnote{We thank Andres Fernandez Herrero for pointing out the relevance of these results to us.}
\begin{recollection}\label{recollection: derived grassmannian bundles}
Let $\eE \in \Perf^G(X)$ be a perfect complex supported in homological degrees $\geq 0$. Denote by $\eH_0(\eE) \in \Qcoh^G(X)$ its zero homology group. Consider the relative derived Grassmannian of rank $j$ of $\eE$ on $X$ in the sense of \cite[Definition 4.3]{Jia22b}, denoted
\begin{equation}
   Y = \Grass_{X}(\eE, j) \xrightarrow{f} X.
\end{equation}
This is a $G$-equivariant derived scheme. By \cite[Proposition 4.7]{Jia22b}, its classical truncation is the usual relative Grassmannian
\begin{equation}
   Y^{\cl} = \Grass_{X}(\eH_0(\eE), j) \xrightarrow{f^{\cl}} X.
\end{equation}

Assume further that $\eE$ has Tor-amplitude $[1, 0]$ and $j=1$. The category $\Perf^G(Y)$ then has an explicit semi-orthogonal decomposition in terms of a flattening stratification for $\eE$ by \cite[Theorem 3.2 and Remark 1.1]{Jia23}. In particular, $\Perf^G(X)$ is a semi-orthogonal summand of $\Perf^G(Y)$ via $f^*$.
\end{recollection}

In fact, we only need the following consequence. 
\begin{proposition}\label{proposition: K-theory and derived grassmannians}
Let $\eE \in \Perf^G(X)$ of Tor-amplitude $[1, 0]$. Let $Y = \Grass_{X}(\eE, 1) \to X$ be the associated relative derived Grassmannian of rank $1$. Let $\E(-)$ be a localizing invariant.
Then $\E^G(X)$ is a direct summand of $\E^G(Y)$, compatibly with natural maps of localizing invariants.
\end{proposition}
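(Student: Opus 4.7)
The plan is to reduce this to a direct application of two pieces that have already been set up in the excerpt: Jiang's semi-orthogonal decomposition for derived Grassmannians recalled in Recollection \ref{recollection: derived grassmannian bundles}, and the general fact from Recollection \ref{recollection: semiorthogonal decompositions and localizing invariants} that localizing invariants convert semi-orthogonal decompositions into direct sums.

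First I would invoke Recollection \ref{recollection: derived grassmannian bundles}: since $\eE \in \Perf^G(X)$ has Tor-amplitude $[1,0]$ and we are looking at the rank one derived Grassmannian $Y = \Grass_X(\eE, 1)$, Jiang's theorem \cite[Theorem 3.2 and Remark 1.1]{Jia23} provides a $G$-equivariant semi-orthogonal decomposition of $\Perf^G(Y)$ whose pieces are indexed by a flattening stratification of $\eE$ on $X$, and such that pullback along $f: Y \to X$ realizes $\Perf^G(X)$ as one of the semi-orthogonal summands of $\Perf^G(Y)$. In particular, in $\Cat^{\ex}_k$ there is a semi-orthogonal decomposition of the form
\begin{equation*}
    \Perf^G(Y) = \bigl\langle \Perf^G(X), \, \eC \bigr\rangle
\end{equation*}
for some complementary stable idempotent complete $k$-linear subcategory $\eC$, with the inclusion of the first factor given by $f^*$.

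Next I would apply Recollection \ref{recollection: semiorthogonal decompositions and localizing invariants}: any localizing invariant $\E(-)$ is additive and sends semi-orthogonal decompositions to direct sums (by \cite[Lemma 2.8]{Kha18}). Applied to the decomposition above, this yields a natural equivalence
\begin{equation*}
    \E^G(Y) = \E(\Perf^G(Y)) \simeq \E^G(X) \oplus \E(\eC),
\end{equation*}
with the inclusion of the first summand induced by $f^*$ and a splitting produced by the projection onto the first semi-orthogonal factor. This exhibits $\E^G(X)$ as a direct summand of $\E^G(Y)$.

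Finally, compatibility with natural transformations $\tau: \E(-) \to \E'(-)$ of localizing invariants is automatic: the direct sum decomposition is produced functorially from the semi-orthogonal decomposition of $\Perf^G(Y)$, which is independent of $\E$, so $\tau$ respects the splitting. There is essentially no obstacle here beyond citing the correct form of Jiang's result; the only thing one must check is that the hypothesis on $\eE$ (perfect, Tor-amplitude $[1,0]$, rank one Grassmannian) matches the assumption under which \cite[Theorem 3.2]{Jia23} produces the semi-orthogonal decomposition and identifies $f^*$ as a fully faithful inclusion of a summand — which is precisely what was flagged in Recollection \ref{recollection: derived grassmannian bundles}.
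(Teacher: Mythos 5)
Your proof is correct and takes essentially the same approach as the paper: the paper's proof is a one-line citation to Recollections \ref{recollection: derived grassmannian bundles} and \ref{recollection: semiorthogonal decompositions and localizing invariants}, and you have simply unpacked precisely what that citation entails, including the observation that the splitting is functorial and hence compatible with natural transformations of localizing invariants.
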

\begin{proof}
Follows from Recollection \ref{recollection: derived grassmannian bundles} and Recollection \ref{recollection: semiorthogonal decompositions and localizing invariants}.    
\end{proof}

In particular, $K^G(X, k) \xrightarrow{\tr_X} HH^G(X, k)$ is a direct summand of $K^G(Y, k) \xrightarrow{\tr_Y} HH^G(Y, k)$. Similarly for the comparison map from $K^G$ to $KH^G$.

\subsection{First examples}\label{section: first examples}

The trace map is an isomorphism in degree zero for interesting varieties arising in geometric representation theory. Let us give the most basic examples of this phenomenon: first for the point and then for partial flag varieties. 
An analogous trace map for equivariant cohomology in these examples (and beyond) was studied in \cite{HR23}, which was an important motivation for us.

\paragraph{Equivariant point.}
Let $G$ be a connected split reductive group of rank $n$ over a field $k$ and $T$ its maximal torus. Let $W$ be the Weyl group. We start by noting how the above works out for the equivariant point.

\begin{example}[Torus-equivariant point]\label{example: T-equivariant point}
Let $X=\pt$ with the trivial $T$-action. Then the $k$-linearized trace map gives an isomorphism
\begin{equation*}
  \tr_X:  K^T_0(\pt, k) \xrightarrow{\cong} H_0(\Fix_{\frac{T}{T}}(\pt), \O).
\end{equation*}
Both sides are given by the localized polynomial ring $k[t_1^{\pm 1}, \dots, t_n^{\pm 1}]$.    
\end{example}
\begin{proof}
Follows from the identification of the left-hand side with the $k$-linearized representation ring $R(T, k)$ and the right-hand side with $H_0(\frac{T}{T}, \O) = H_0(T, \O)$. The trace map is then given by taking characters of representations, giving the desired isomorphism.    
\end{proof}

\begin{example}[Equivariant point]\label{example: trace map in k-theory for point}
Let $X=\pt$ with the trivial $G$-action. Then the $k$-linearized trace map gives an isomorphism
\begin{equation*}
  \tr_X:  K^G_0(\pt, k) \xrightarrow{\cong} H_0(\Fix_{\frac{G}{G}}(\pt), \O).
\end{equation*}
Both sides are given by the localized polynomial ring $k[t_1^{\pm 1}, \dots, t_n^{\pm 1}]^W$, whose spectrum naturally identifies with $\s = \Tfrac{G}{G}$.
\end{example}
\begin{proof}
Under the identification of the left-hand side with the $k$-linearized representation ring $R(G, k)$ of $G$ and the right-hand side with the ring of conjugacy-invariant functions on $G$, this is the classical isomorphism \cite[\S 3.2]{Hum95}, \cite[\S 22.a--22.b]{Mil17} valid in any characteristic.
\end{proof}

\paragraph{Partial flag varieties.}
We illustrate the above notions for partial flag varieties. 
For the sake of clarity, we stick to the case $k=\C$ and $G= GL_n$. Given a dominant coweight $\mu$ of the diagonal torus $T$ of $G$, we write $P = P_{\mu}$ for the associated parabolic subgroup of $G$ and $W_{\mu}$ the corresponding Weyl group. Let $X=G/P$ be the quotient of $G$ by $P$ equipped with the left translation by $G$. Recall from Example \ref{example: fixed-point schemes of partial flag varieties} that $\Fix_G(X)$ amounts to the multiplicative Grothendieck--Springer resolution.

We now show that the degree zero trace map is an isomorphism in this case, realizing (reduced) rings of invariant functions on multiplicative Grothendieck--Springer resolutions via equivariant algebraic $K$-theory. In fact, the derived and infinitesimal structures of the fixed-point scheme are irrelevant.
\begin{example}[Partial flag variety]\label{example: partial flag variety}
Take $k = \C$ and consider $X=G/P$ with $G$ acting by left translations. Then we get isomorphisms
\begin{equation*}\label{equation: trace map for partial flag varieties}
  \tr_X:  K^G_0(X, k) \xrightarrow{\cong} H_0(\Fix^{\lL}_{\frac{G}{G}}(X), \O) \xrightarrow{\cong} H_0(\Fix_{\frac{G}{G}}(X), \O)  \xrightarrow{\cong} H_0(\Fix^{\red}_{\frac{G}{G}}(X), \O) \xrightarrow{\cong} H_0(\Fix^{\red}_{\s}(X), \O)
\end{equation*}
and these are explicitly given by the polynomial ring $k[t_1^{\pm 1}, \dots, t_n^{\pm 1}]^{W_{\mu}}$.
\end{example}

\begin{proof} 
Since the stacks in question are flawless, we can interpret $H_0(\Fix^{\lL}_{\frac{G}{G}}(X), \O)$ as $\C$-linear Hochschild homology. The trace map is an isomorphism in degree $0$ for the point by Example \ref{example: trace map in k-theory for point}, so the first map is an isomorphism by the projective bundle formula from Proposition \ref{proposition: projective bundle formula for localizing invariants}.

Further note that 
\begin{equation*}
    \Fix^{\lL}_{\frac{G}{G}}(X) = \dL(G \setminus (G/P)) = \dL(\pt/P) = \tfrac{P}{P}
\end{equation*}
which is already classical and reduced. Hence $\Fix^{\lL}_\frac{G}{G}(X) = \Fix_\frac{G}{G}(X) = \Fix^{\red}_\frac{G}{G}(X)$ giving the next two isomorphisms. See also \cite[\S 3.1]{BN13} for realted discussion; the classicality can be alternatively deduced through the criterion \cite[Proposition 3.3]{BN13}.

Now note that
\begin{equation*}
    K^G_0(G/P) = K_0(G\backslash G / P) = K_0(\pt / P) = R(P) = R(L). 
\end{equation*}
is the representation ring of the Levi quotient $L = L_{\mu}$ of $P$. This indeed matches $\Z[t_1^{\pm 1}, \dots, t_n^{\pm 1}]^{W_{\mu}}$. Thus $K^G_0(X, k) =  k[t_1^{\pm 1}, \dots, t_n^{\pm 1}]^{W_{\mu}}$, giving the desired presentation.
In particular, this ring is reduced and normal.

For the final term, we argue by classical algebraic geometry. Over a geometric point $s \in \s$, the reduced fiber of $K^G_0(X, k)$ consists of finitely many points labelled by the set $\Eig^{\mu}_s$ of $\mu$-partitions of the multiset of roots of $s$ (regarded as a symmetric polynomial).
Similarly, the reduced fiber of $\Fix_{\s}(X)$ over $s$ consists of finitely many points labelled by the same set $\Eig^{\mu}_s$, as these are the fixed points of (any) regular lift $g_s \in G$ of $s$.
Now, $\tr^{\red}_{\s}: \Fix^{\red}_{\s}(X) \to \Spec K^G_0(X)$ is a well-defined map of (affine) schemes over $\s$. It induces a bijection on geometric points, as this it the case fiberwise over $\s$. Since both sides are reduced and the target is normal (it is the spectrum of a localized polynomial ring), we deduce that it is an isomorphism. Hence the whole composition in the statement is an isomorphism, finishing the proof.

The isomorphism between the last two terms can be alternatively deduced as follows. Note $\s$ intersects all regular conjugacy classes in $G$ and $G \setminus G^{\reg}$ has codimension at least $2$. We deduce that the restriction map $H_0(\Fix^{\red}_{\frac{G}{G}}(X), \O) \to H_0(\Fix^{\red}_{\s}(X), \O)$ is injective. It is also surjective by the above, hence an isomorphism. (In particular, one can proceed this way to prove everything without appeal to semiorthogonal decompositions.)
\end{proof}

\subsection{Equivariant \texorpdfstring{$KH$}{KH} and proper excision}
We now recall the $\A^1$-homotopy invariant version of algebraic $K$-theory $KH$ in the equivariant setup. This in particular satisfies {\it proper excision}, which is one of our main computational tools later.

Let us recall from Notation \ref{notation: KH} that $KH^G(X)$ is defined via the simplicial spectrum $K^G(\Delta^{\bullet} \times X)$. Note that $KH$ is a localizing invariant on $\Cat_k^{\ex}$ by Recollection \ref{recollection: localizing invariants} and we have a natural map $K(-) \to KH(-)$.

Let $T$ be a torus over $k$. Then $KH^T(-)$ agrees with other commonly used definitions by \cite[Theorem 3.1.(4)]{Hoy21}. Moreover, $KH^T(-)$ is $\A^1$-homotopy invariant and the natural map $K^T(-) \to KH^T(-)$ is an isomorphism on smooth schemes \cite[Theorem 1.3]{Hoy21}. Furthermore, $KH^T(-)$ is indifferent to derived structures \cite{LT19}, \cite[Theorem F]{KR22}.
Finally $KH^T(-)$ satisfies proper excision on equivariant abstract blowups in any characteristic, which we now recall.
\begin{recollection}
A pullback square in $\Sch^{\qcqs, T}_k$ 
\begin{equation}\label{equation: equivariant abstract blowup square}
    \begin{tikzcd}
        Y \arrow[d, "f"] \arrow[r, hookleftarrow] & E \arrow[d] \\
        X \arrow[r, hookleftarrow, "i"] & Z
    \end{tikzcd}
\end{equation}
is called an {\it equivariant abstract blowup square} if $f$ is proper and $i$ is a finitely presented closed immersion such that $f$ is an isomorphism over the open complement $X \setminus Z$.   
\end{recollection}

\begin{proposition}\label{proposition: fibered square of KH_G-theory spectra}
Applying $KH^T(-)$ to \eqref{equation: equivariant abstract blowup square} gives a homotopy fiber square
\begin{equation*}\label{equation: fibered square of KH_G-theory spectra}
    \begin{tikzcd}
        KH^{T}(Y) \arrow[r] & KH^{T}(E) \\
        KH^{T}(X) \arrow[u] \arrow[r] & KH^{T}(Z) \arrow[u]
    \end{tikzcd}
\end{equation*}
\end{proposition}
\begin{proof}
This holds by \cite[Theorem G]{KR22}.
\end{proof}

\begin{remark}\label{remark: proper excision}
Given an invariant $F$ with values in spectra, we say that $F$ satisfies {\it (equivariant) proper excision} if it sends any (equivariant) abstract blowup square \eqref{equation: equivariant abstract blowup square} to a homotopy fiber square. If this is the case, \eqref{equation: equivariant abstract blowup square} in particular induces the long exact sequence on homotopy groups of Mayer-Vietoris type
\begin{equation*}
        \dots \to F_1(E) \to F_0(X) \to F_0(Y) \oplus F_0(Z) \to F_0(E) \to F_{-1}(X) \to \dots
\end{equation*}
(and similarly in the equivariant situation).
\end{remark}

\section{Perfect schemes}\label{section: perfect schemes}
In the rest of the paper, we restrict our attention to perfect schemes in characteristic $p$, which we presently recall. See \cite[\S 3]{BS16}, \cite[Appendix A]{Zhu14} for a detailed introduction. 

\paragraph{Perfect schemes in characteristic $p$.}
Let $k$ be a perfect field; we take $k = \F_p$.
A scheme $X \in \Sch_k$ is called perfect, if the Frobenius morphism $\varphi: X \to X$ is an isomorphism. The full subcategory of perfect schemes is denoted $\Sch^{\perf}_k \subseteq \Sch_k$.
Given a scheme $X \in \Sch_k$, its perfection $X_{\perf}\in \Sch^{\perf}_k$ is defined as the $\N$-indexed inverse limit along $\varphi$
\begin{equation*}
X_{\perf} := \underset{\varphi}{\lim} \ X.  
\end{equation*}
Note that $\varphi$ is an affine map; in particular $\lim_{\varphi} X$ exists as a scheme \cite[Tag 01YV]{Sta}. On affines, the perfection is simply given by iteratively adjoining $p$-th roots of all functions. This gives a functor
\begin{align*}
 (-)_\perf: \Sch_k & \to \Sch^{\perf}_k \\
  X & \mapsto X_{\perf}.
\end{align*}
More generally, the same formula defines the perfection $\eX_{\perf} := \lim_{\varphi} \eX$ of any derived stack $\eX$ over $k$.

\begin{setup}\label{setup: perfect geometry}
We denote $\Sch_k^{\fp} \subseteq \Sch_k$ the full subcategory of finitely presented $k$-schemes. We denote its objects by a subscript ``0" as $X_0$, $Y_0$, $f_0: Y_0 \to X_0$ and so on. 

We work with the category $\Sch_k^{\pfp}$ of perfectly finitely presented perfect $k$-schemes (and perfectly finitely presented morphisms) -- see \cite[\S 3]{BS16}: 
we only consider objects and morphisms coming as perfections of objects and morphism from $\Sch_k^{\fp}$. We denote such perfection by dropping the subscript: unless specified otherwise, $X$ stands for the perfection of $X_0$, $f$ stands for the perfection of $f_0$, and so on. 
\end{setup}

\paragraph{Basic properties.}
We recall some facts from \cite[\S 3]{BS16} about perfect schemes.
\begin{itemize}
    \item[(i)] \hypertarget{property: adjoint}{the} functor $(-)_{\perf}: \Sch_k \to \Sch^{\perf}_k$ is right adjoint to the inclusion $\Sch^{\perf}_k \subseteq \Sch_k$,
    \item[(ii)] \hypertarget{property: limits}{the} functor $(-)_{\perf}$ commutes with arbitrary limits, in particular with fiber products,
\end{itemize}
Indeed, the first line is clear from the definition; the second line follows formally.
Many standard properties of schemes are preserved under taking perfection -- see \cite[Lemma 3.4]{BS16} for a list.

\paragraph{Functions and coherent sheaves.}
\begin{itemize}
    \item[(iii)] \hypertarget{property: reduced}{perfect} schemes are reduced,
    \item[(iv)] \hypertarget{property: functions}{global} functions on perfect schemes satisfy proper excision, i.e. taking $\RG(-, \O)$ of a perfection of an abstract blowup square induces long exact sequence on cohomology groups.
    \item[(v)] \hypertarget{property: sections}{for} a coherent sheaf $\eF \in \Qcoh(X_0)$ it holds that $H^i(X, \eF) = \colim_{\varphi^*} H^i(X_0, \varphi^{*, (i)}(\eF))$,
    \item[(vi)] \hypertarget{property: classical}{all} perfect derived schemes are classical.
\end{itemize}

For \hyperlink{property: reduced}{(iii)} note that all non-reducedness disappears in the colimit along Frobenius on functions. Part \hyperlink{property: functions}{(iv)} is \cite[Lemma 3.9]{BST13}, \cite[Lemma 4.6]{BS16}. Part \hyperlink{property: sections}{(v)} is a standard fact about the compatibility of coherent cohomology with inverse limits along affine morphisms (by affineness of bonding maps this follows from \cite[Tag 073D or Tag 0GQU]{Sta}). Part \hyperlink{property: classical}{(vi)} is \cite[Corollary 11.9]{BS16}.

\paragraph{Rationality and perfection.}
Rational singularities are preserved under perfection.
\begin{definition}
A map of schemes $f: Y \to X$ is called {\it rational} if $Rf_* \O_Y = \O_X$.    
\end{definition}

\begin{observation}\label{lemma: perfection preserves rationality}
    Let $f_0: Y_0 \to X_0$ be a rational map in $\Sch_k^{\fp}$. Then its perfection is rational as well.
\end{observation}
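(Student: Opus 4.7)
The strategy is to verify $Rf_*\O_Y = \O_X$ Zariski-locally on $X$ using property \hyperlink{property: sections}{(v)}. Since perfection is a universal homeomorphism, every affine open $U \subseteq X$ comes from an affine open $U_0 \subseteq X_0$; set $V_0 := f_0^{-1}(U_0) \subseteq Y_0$ and let $V$ be its perfection. Using property \hyperlink{property: limits}{(ii)} that perfection commutes with fiber products, one checks $V = f^{-1}(U)$. It therefore suffices to show that $\RG(V, \O_V)$ is concentrated in homological degree zero with $H_0 = \O_X(U)$, induced by the natural map $\O_X \to f_* \O_Y$.

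Now I invoke property \hyperlink{property: sections}{(v)} applied to the coherent sheaf $\O_{V_0}$ on $V_0 \in \Sch_k^{\fp}$:
\begin{equation*}
    H^i(V, \O_V) = \colim_{\varphi^*} H^i(V_0, \varphi^{*,(i)} \O_{V_0}) = \colim_{\varphi^*} H^i(V_0, \O_{V_0}),
\end{equation*}
since Frobenius pullback preserves the structure sheaf. Rationality of $f_0$ gives $H^i(V_0, \O_{V_0}) = 0$ for $i \geq 1$ and $H^0(V_0, \O_{V_0}) = \O_{X_0}(U_0)$, so the colimit vanishes in positive degrees. For $i = 0$, the transition maps correspond under $f_0^*\colon \O_{X_0}(U_0) \xrightarrow{\cong} \O_{V_0}(V_0)$ to the Frobenius $a \mapsto a^p$ on $\O_{X_0}(U_0)$; hence the colimit is the perfection of $\O_{X_0}(U_0)$, which is exactly $\O_X(U)$ by the definition of perfection on affines.

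Combining these, $R^i f_* \O_Y = 0$ for $i \geq 1$ and the natural map $\O_X \to f_*\O_Y$ is an isomorphism on each affine open, so $Rf_* \O_Y = \O_X$ as required. There is no essential obstacle: the argument is a direct application of property \hyperlink{property: sections}{(v)}. The only small verifications are the identification $V = f^{-1}(U)$ (via universal homeomorphism plus property \hyperlink{property: limits}{(ii)}) and the recognition that the transition maps along $\varphi^*$ become the ordinary Frobenius on functions, both of which are immediate from the construction of perfection.
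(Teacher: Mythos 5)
Your proof is correct and follows essentially the same route as the paper's: work Zariski-locally on $X$ (equivalently on $X_0$, since perfection is a homeomorphism), invoke property (v) to express $\RG^i(V,\O_V)$ as a Frobenius colimit of $\RG^i(V_0,\O_{V_0})$, and conclude from the rationality of $f_0$. Your write-up is slightly more explicit than the paper's in two spots that the paper leaves implicit — that $V = f^{-1}(U)$ via the limit-compatibility of perfection, and that in degree zero the transition maps are the Frobenius so the colimit is $\O_X(U)$ rather than $\O_{X_0}(U_0)$ — but the argument is the same.
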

\begin{proof}
Working Zariski-locally on $X_0$, the rationality of $f_0$ amounts to the following:
For each open affine $U_0 \subseteq X_0$, $\RG((Y_U)_0, \O_{(Y_U)_0}) = \O_{U_0}$.
Equivalently, this reads as
\begin{align*}
 \RG^i((Y_U)_0, \O_{(Y_U)_0}) =    
\begin{cases*}
 0  & \text{if } $i \neq 0$, \\
 \O_{U_0}  & \text{if } $i = 0$.
\end{cases*}
\end{align*}

We need to prove that for each open affine $U \subseteq X$, $\RG(Y_U, \O_{Y_U}) = \O_U$. This can be checked on each cohomology group separately, where it follows from \hyperlink{property: sections}{(v)}:
\begin{align*}
 \RG^i(Y_U, \O_{Y_U}) = \colim_{\varphi^*} \RG^i((Y_U)_0, \O_{(Y_U)_0}) =   
\begin{cases*}
 0  & \text{if } $i \neq 0$, \\
 \O_U  & \text{if } $i = 0$.
\end{cases*}
\end{align*}
\end{proof}

\paragraph{Topological invariance.}
\begin{itemize}
    \item[(vii)] \hypertarget{property: perfection univ homeo}{the} map $X_{\perf} \to X$ is a universal homeomorphism,
    \item[(viii)] \hypertarget{property: univ homeo}{a} universal homeomorphism of perfect schemes is an isomorphism, 
    \item[(ix)] \hypertarget{property: awn}{the} functor $(-)_{\perf}$ agrees with absolute weak normalization.
\end{itemize}
For \hyperlink{property: perfection univ homeo}{(vii)} and \hyperlink{property: univ homeo}{(viii)}, see \cite[Theorem 3.7, Lemma 3.8]{BS16}.
It is straightforward to deduce \hyperlink{property: awn}{(ix)} -- we do this now for completeness.

\begin{observation}
Let $Y_0 \in \Sch_k$. Then its perfection $Y$ is the absolute weak normalization of $Y_0$.    
\end{observation}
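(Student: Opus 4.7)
The plan is to verify that the perfection map $Y \to Y_0$ satisfies the defining universal property of the absolute weak normalization: it is a universal homeomorphism, and is terminal among universal homeomorphisms onto $Y_0$. Property \hyperlink{property: perfection univ homeo}{(vi)} handles the first point immediately.

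For terminality, let $Z \to Y_0$ be any universal homeomorphism. The first step is to construct a canonical $Y_0$-map $Y \to Z$. The map $Z_{\perf} \to Z$ is a universal homeomorphism by \hyperlink{property: perfection univ homeo}{(vi)}, so the composite $Z_{\perf} \to Y_0$ is also a universal homeomorphism. Since $Z_{\perf}$ is perfect, the adjunction \hyperlink{property: adjoint}{(i)} produces a unique factorization $\phi: Z_{\perf} \to Y$ over $Y_0$. The composite $Z_{\perf} \xrightarrow{\phi} Y \to Y_0$ and the map $Y \to Y_0$ are both universal homeomorphisms, so the 2-out-of-3 property for universal homeomorphisms (Stacks, Tag 0CND) forces $\phi$ itself to be a universal homeomorphism of perfect schemes. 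By \hyperlink{property: univ homeo}{(vii)}, $\phi$ is an isomorphism. Composing its inverse with $Z_{\perf} \to Z$ produces the desired $Y_0$-map $Y \to Z$.

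For uniqueness, given any $Y_0$-map $\psi: Y \to Z$, the adjunction \hyperlink{property: adjoint}{(i)} applied to the perfect source $Y$ gives a unique lift $\tilde\psi: Y \to Z_{\perf}$ with $\psi = (Z_{\perf} \to Z) \circ \tilde\psi$. Then $\phi \circ \tilde\psi: Y \to Y$ is a $Y_0$-endomorphism of $Y$; but $Y_0$-maps $Y \to Y$ correspond via \hyperlink{property: adjoint}{(i)} to $Y_0$-maps $Y \to Y_0$, of which there is exactly one. Hence $\phi \circ \tilde\psi = \id_Y$, so $\tilde\psi = \phi^{-1}$ and $\psi$ is determined.

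The main technical input is the 2-out-of-3 property for universal homeomorphisms together with \hyperlink{property: univ homeo}{(vii)}; these together guarantee that the candidate map $\phi$ is invertible. Beyond that, the argument is a purely formal play between the adjunction \hyperlink{property: adjoint}{(i)} and the fact that perfection preserves universal homeomorphisms, so no deeper content is required.
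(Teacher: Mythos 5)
Your argument is correct in substance, but takes a genuinely different route from the paper. The paper starts from the absolute weak normalization $f_0 : X_0 \to Y_0$ itself, perfects the square, uses \hyperlink{property: univ homeo}{(vii)} to invert the top map, and then concludes via reducedness of $Y$ and \cite[Tag 0H3H, (3)]{Sta} that the resulting map $Y \to X_0$ is an isomorphism; so the comparison with the awn is established directly. You instead verify the universal property of the awn for $Y \to Y_0$ head-on, using the adjunction \hyperlink{property: adjoint}{(i)} together with a cancellation/2-out-of-3 statement for universal homeomorphisms and again \hyperlink{property: univ homeo}{(vii)} to produce and pin down the unique $Y_0$-map $Y \to Z$ for every universal homeomorphism $Z \to Y_0$. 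Your approach is more self-contained -- it avoids perfecting the awn and citing the external reducedness criterion -- at the price of needing the 2-out-of-3 lemma for universal homeomorphisms and a slightly more delicate uniqueness argument (one must check that the adjunction is compatible with structure maps to $Y_0$, which you do implicitly and correctly).

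One small wording issue: you call the universal property ``terminality,'' but what you actually prove (and what is correct) is that $Y \to Y_0$ is \emph{initial} among universal homeomorphisms onto $Y_0$: you construct a unique $Y_0$-map \emph{from} $Y$ \emph{to} every $Z$, which matches the paper's statement that the awn ``can be characterized as the initial scheme equipped with a universal homeomorphism to $Y_0$.'' Since the construction goes in the correct direction, this is only a slip of terminology and does not affect the argument.
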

\begin{proof}
We recall the notion of absolute weak normalization \cite[Tag 0EUK]{Sta} (also see \cite{GT80, LV81, Man80}). The absolute weak normalization $Y^{\awn}_0 \to Y_0$ of $Y_0$ can be characterized as the initial scheme equipped with a universal homeomorphism to $Y_0$ \cite[Tag 0EUS]{Sta}. 
Let $f_0: X_0 \to Y_0$ be the absolute weak normalization of $Y_0$. Taking perfections, we obtain the commutative square 
\begin{center}
    \begin{tikzcd}
        X \arrow[r, "f"] \arrow[d] & Y \arrow[d] \arrow[dl, dashed]\\
        X_0 \arrow[r, "f_0"] & Y_0
    \end{tikzcd}
\end{center}
Now $f_0$ is universal homeomorphism by design, so $f$ is an isomorphism by \hyperlink{property: univ homeo}{(viii)}. Thus we get the factorization $Y \to X_0 \to Y_0$ as indicated by the dashed arrow. Since $Y$ is perfect, it is reduced by \hyperlink{property: reduced}{(iii)}, so $Y \to X_0$ is an isomorphism by \cite[Tag 0H3H, (3)]{Sta}.
\end{proof}

\paragraph{Perfection of global quotient stacks.}
Global quotient stacks are compatible with perfections in the following manner.
\begin{proposition}\label{proposition: perfection of global quotient stacks}
Let $G$ be a smooth algebraic group acting on a scheme $X$ over $k$.
Then
\begin{equation*}
    \left( X/G \right)_{\perf} = X_\perf / G_\perf
\end{equation*}
\end{proposition}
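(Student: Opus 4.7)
The plan is to compare functors of points of the two stacks on perfect affine test schemes. For any $k$-scheme $Y$, the adjunction \hyperlink{property: adjoint}{(i)} gives $Y(R) = Y_\perf(R)$ for perfect $R$, and the same identity extends to stacks by restriction; hence perfecting a stack is computed by restricting its groupoid of $R$-points to the perfect site.

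Fix a perfect $k$-algebra $R$. Then $(X/G)(R)$ is the groupoid of pairs $(P, f)$, where $P$ is a $G$-torsor on $\Spec R$ and $f \colon P \to X$ is $G$-equivariant, while $(X_\perf/G_\perf)(R)$ is the analogous groupoid for $G_\perf$ and $X_\perf$. I would construct the equivalence via $(P, f) \mapsto (P_\perf, f_\perf)$. That $P_\perf$ is a $G_\perf$-torsor uses smoothness of $G$ in an essential way: $P$ admits an étale trivializing cover $\Spec R' \to \Spec R$, and since étale morphisms over a perfect base have perfect source, $R'$ is automatically perfect. Property \hyperlink{property: limits}{(ii)} then yields $(G \times_k \Spec R')_\perf = G_\perf \times_k \Spec R'$, exhibiting $P_\perf$ as locally trivial for the $G_\perf$-action inherited by functoriality; the equivariant map $f_\perf$ is automatic.

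For the inverse direction, a $G_\perf$-torsor $Q$ on $\Spec R$ is again étale-locally trivial because $G_\perf \to G$ is a universal homeomorphism, so trivializations for $G$ and $G_\perf$ correspond; one recovers a $G$-torsor via the pushout $Q \mapsto Q \times^{G_\perf} G$ whose perfection tautologically returns $Q$. The main obstacle is making this identification of $G$-torsors with $G_\perf$-torsors on a perfect base precise and ensuring that fpqc sheafification on $\Sch_k$, restricted to perfect test schemes, coincides with fpqc sheafification on $\Sch_k^\perf$. Smoothness of $G$ is what saves the argument: trivializing covers can always be chosen étale, and étale covers over a perfect base remain in $\Sch_k^\perf$, so the \v{C}ech data controlling the two sheafifications agree.
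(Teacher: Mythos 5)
The paper does not give an argument here: its ``proof'' is a one-line citation to \cite[Lemma A.11]{Zhu14}. So there is nothing in the paper to compare against step-by-step. That said, your sketch is a correct direct proof, and it captures exactly the two points on which the statement actually turns: (1) by the adjunction \hyperlink{property: adjoint}{(i)} the prestacks $R\mapsto X(R)/G(R)$ and $R\mapsto X_\perf(R)/G_\perf(R)$ literally agree on perfect $R$, and (2) the fpqc sheafification needed to pass from the prestack quotient to the quotient stack does not leave the perfect world, because smoothness of $G$ lets one trivialize $G$-torsors over étale covers and an étale cover of a perfect affine is perfect. This second point is genuinely where smoothness enters, and it is the step one would otherwise have to worry about; you identified it correctly.

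One small imprecision worth flagging: the sentence ``a $G_\perf$-torsor $Q$ on $\Spec R$ is again étale-locally trivial because $G_\perf\to G$ is a universal homeomorphism'' is not really a proof as stated --- universal homeomorphisms do not by themselves transport triviality of torsors. The cleaner route (and the one you are implicitly taking) is to avoid this claim altogether: show that $P\mapsto P_\perf$ and $Q\mapsto Q\times^{G_\perf}G$ are mutually inverse by checking the two composite identifications fpqc-locally over a trivializing cover, where everything reduces to $(G\times\Spec R')_\perf = G_\perf\times\Spec R'$ for perfect $R'$ via \hyperlink{property: limits}{(ii)}. Smoothness of $G$ is then used only once, to guarantee that the trivializing cover for a given $G$-torsor can be chosen étale (hence perfect), so that a $G$-torsor on a perfect base does descend to the perfect site. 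With that adjustment your argument is sound, and is presumably the content of the cited lemma of Zhu.
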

\begin{proof}
This is a special case of \cite[Lemma A.11]{Zhu14}.
\end{proof}

\paragraph{Perfect Stein factorization.}
We first recall the classical Stein factorization \cite[Tag 03GX]{Sta}. 
Let $f_0: X_0 \to S_0$ be a morphism. Setting $S_0' = \underline{\Spec}_{\O_{S_0}}(f_{0, *} \O_X)$, we can factor $f_0$ as follows.
\begin{center}
    \begin{tikzcd}
        X_0 \arrow[rr, "f_0'"] \arrow[dr, "f_0"] &  & S'_0 \arrow[dl, "\pi_0"] \\
        & S_0 &
    \end{tikzcd}
\end{center}

Assuming $f_0: X_0 \to S_0$ is proper and $S_0$ is locally noetherian, \cite[Tag 03H0]{Sta} shows that $f'_0$ is proper with geometrically connected fibers, $f'_{0, *}\O_{X_0} = \O_{S'_{0}}$, $\pi_0$ is finite, $S_0'$ is the normalization of $S_0$ in $X_0$.
In particular, for any geometric point $s_0 \in S_0$ it holds that $\pi_0(X_{s_0}) = \pi_0(S'_{s_0})$ as finite sets.    

Assume now $X, S \in \Sch^{\pfp}_k$ and let $f: Y \to S$ be a perfectly proper map.
Let $S' = \underline{\Spec}_{\O_S}(f_* \O_X)$.

\begin{lemma}\label{lemma: perfect stein 1}
The map $S' \to S$ is an isomorphism if and only if all geometric fibers of $f: X \to S$ are connected.   
\end{lemma}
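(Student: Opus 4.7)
The plan is to reduce to the classical Stein factorization via a choice of finitely presented model, then transport the equivalence back to the perfect setting using property \hyperlink{property: sections}{(v)} and the universal homeomorphism characterization of perfection.

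\textbf{Step 1: Model and classical Stein.} First, I would choose a proper morphism $f_0: X_0 \to S_0$ in $\Sch_k^{\fp}$ whose perfection is $f: X \to S$. Applying the classical Stein factorization \cite[Tag 03H0]{Sta} yields $S'_0 := \underline{\Spec}_{\O_{S_0}}((f_0)_*\O_{X_0})$ together with a factorization $X_0 \to S'_0 \xrightarrow{\pi_0} S_0$ in which $\pi_0$ is finite and, for every geometric point $\bar s \in S_0(\bar k)$, the cardinality $|\pi_0^{-1}(\bar s)|$ equals the number of connected components of the geometric fiber $(X_0)_{\bar s}$.

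\textbf{Step 2: Identification $S' \cong (S'_0)_{\perf}$.} Both sides are affine over $S$, so this is a Zariski-local statement. For each open affine $U_0 \subseteq S_0$ with perfection $U \subseteq S$, property \hyperlink{property: sections}{(v)} gives
\begin{equation*}
    \Gamma(X_U, \O_{X_U}) = \colim_{\varphi^*} \Gamma((X_U)_0, \O_{(X_U)_0}) = \colim_{\varphi^*} \Gamma(U_0, (f_0)_*\O_{X_0}),
\end{equation*}
and the right-hand side is the perfection of $\Gamma(S'_0|_{U_0}, \O_{S'_0})$, matching the coordinate ring of $(S'_0)_{\perf}|_U$. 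This identification is compatible with the $S$-structure by construction.

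\textbf{Step 3: Conclude.} Under the identification of Step 2, the morphism $S' \to S$ is the perfection of $\pi_0: S'_0 \to S_0$. By properties \hyperlink{property: perfection univ homeo}{(vi)} and \hyperlink{property: univ homeo}{(vii)}, $(\pi_0)_{\perf}$ is an isomorphism if and only if $\pi_0$ is a universal homeomorphism. Since $\pi_0$ is finite, this happens precisely when each geometric fiber $\pi_0^{-1}(\bar s)$ is a single geometric point, which by Step 1 is equivalent to every geometric fiber of $f_0$ being connected. Finally, the universal homeomorphism $S \to S_0$ induces a bijection on geometric points, and for corresponding points $\bar s \leftrightarrow \bar s_0$ the natural map $X_{\bar s} \to (X_0)_{\bar s_0}$ is a universal homeomorphism (it is identified with the perfection of $(X_0)_{\bar s_0}$ after commuting fiber products with the limit defining perfection), so connectedness of geometric fibers transfers between $f$ and $f_0$.

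\textbf{Main obstacle.} The delicate technical point is Step 2: while property \hyperlink{property: sections}{(v)} makes the coordinate-ring computation essentially immediate, one must check that the colimit along Frobenius is compatible with the affine pushforward along $f_0$ and with the $\O_S$-algebra structure defining $S'$ via $\underline{\Spec}$. Once this identification is in place, the rest of the argument is a formal translation of the classical Stein factorization across the perfection functor.
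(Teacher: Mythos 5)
Your proof follows essentially the same route as the paper: reduce to a finitely presented model, apply the classical Stein factorization there, transport the result back to the perfect setting, and then invoke property (vii) (universal homeomorphisms of perfect schemes are isomorphisms). The main difference is that your Step 2 carries out explicitly the identification $S' \cong (S'_0)_{\perf}$ via property (v), whereas the paper compresses this into a citation of compatibility of Stein factorization with base change (Tag 03GY); since the map $S \to S_0$ is not flat in general, your argument through property (v) is arguably the more careful way to justify this compatibility. Everything else — the finiteness of $\pi_0$, the fiberwise bijection with $\pi_0(X_s)$, the final universal homeomorphism criterion, and the transfer of connectedness along the universal homeomorphisms $X \to X_0$ and $S \to S_0$ — matches the paper's argument.
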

\begin{proof}
Without loss of generality, we may fix finite type models $X_0, Y_0, f_0$ and $S'_0$. Stein factorization is compatible with base change \cite[Tag 03GY]{Sta}.

Consider the Stein factorization $X \to S' \to S$. For each geometric point $s \in S$, $S'_s$ is given by finitely many points, which are in bijection with $\pi_0(X_s)$. By \hyperlink{property: univ homeo}{(viii)}, $S' \to S$ is an isomorphism iff it is a universal homeomorphism. But the latter happens iff each $S_s = \pi_0(X_s)$ has a single point.
\end{proof}

\begin{lemma}\label{lemma: perfect stein 2}
Consider a diagram of pfp perfect schemes
\begin{center}
    \begin{tikzcd}
        X \arrow[rr, "g"] \arrow[dr, "f"] &  & T \arrow[dl, "\pi"] \\
        & S
    \end{tikzcd}
\end{center}
Assume that $X \to S$ is perfectly proper and $T \to S$ is affine.
Assume that for each geometric point $s$ of $S$, $\pi_0(g): \pi_0(X_s) \to \pi_0(T_s)$ is bijective.
Then $g: X \to T$ induces an isomorphism on global sections.
\end{lemma}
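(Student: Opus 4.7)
The plan is to reduce the claim to a relative application of the perfect Stein factorization (Lemma \ref{lemma: perfect stein 1}). First I would work Zariski-locally on $S$ to assume $S$ is affine; then $T$ is affine as well, so $\Gamma(X, \O_X) = \Gamma(S, f_* \O_X)$ and $\Gamma(T, \O_T) = \Gamma(S, \pi_* \O_T)$. It therefore suffices to show that the canonical map $\pi_* \O_T \to \pi_* g_* \O_X = f_* \O_X$ of $\O_S$-algebras induces an isomorphism on global sections.

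Next I would observe that $g: X \to T$ is perfectly proper by cancellation (as $X \to S$ is perfectly proper and $\pi: T \to S$ is separated, being affine), so the perfect Stein factorization from Lemma \ref{lemma: perfect stein 1} applies to $g$ as well. Setting $S' := \underline{\Spec}_T g_* \O_X = \underline{\Spec}_S f_* \O_X$, we obtain a commutative diagram
\begin{equation*}
    \begin{tikzcd}
        X \arrow[r] & S' \arrow[r, "h"] & T \arrow[r, "\pi"] & S
    \end{tikzcd}
\end{equation*}
in $\Sch_k^{\pfp}$, with $h$ corresponding to the map $\pi_* \O_T \to f_* \O_X$ of $\O_S$-algebras. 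The claim thus reduces to showing that $h: S' \to T$ is an isomorphism of perfect schemes; by property \hyperlink{property: univ homeo}{(vii)}, this amounts to verifying that $h$ is a universal homeomorphism.

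I would check this universal homeomorphism condition fiberwise over the geometric points of $S$. For $s \in S$, the fiber $h_s: S'_s \to T_s$ is itself the perfect Stein factor of $g_s: X_s \to T_s$, and by the argument in the proof of Lemma \ref{lemma: perfect stein 1} the perfect scheme $S'_s$ is a finite reduced disjoint union of geometric points with underlying set canonically $\pi_0(X_s)$. The main obstacle will be to leverage the $\pi_0$-bijection hypothesis to upgrade this to an actual universal homeomorphism $h_s$: concretely, I would combine the perfect properness of $g_s$ with the affineness of $T_s$ to see that the image of $g_s$ in $T_s$ is a closed subset which is simultaneously perfectly proper and affine over $s$, hence a finite reduced set of geometric points; the $\pi_0$-bijection then forces this image to meet each connected component of $T_s$ in exactly one such point, matching up bijectively with the points of $S'_s$ via $h_s$, and reducedness of all perfect schemes involved closes the argument.
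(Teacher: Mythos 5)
Your proof follows the paper's line of argument exactly: reduce to $S$ (and hence $T$) affine, note that $g$ is perfectly proper, form the perfect Stein factor $h\colon S'\to T$ of $g$ with $S'=\underline{\Spec}_T\,g_*\O_X=\underline{\Spec}_S\,f_*\O_X$, reduce via property \hyperlink{property: univ homeo}{(vii)} to showing $h$ is a universal homeomorphism, and verify this fiberwise over the geometric points of $S$ using the $\pi_0$-bijection. The paper folds the last two steps into a direct appeal to Lemma \ref{lemma: perfect stein 1} applied to $g$, whereas you unwind the universal-homeomorphism criterion by hand; these are the same check, so you have recovered the intended proof.

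One caveat worth flagging — it is equally implicit in the paper's terse write-up — is that the fiberwise step actually needs $T_s$ to be a finite discrete set of geometric points, not merely to have $\pi_0(T_s)$ finite. Your observation that the scheme-theoretic image of $g_s$ is perfectly proper and affine, hence a finite reduced set of points, is correct; but knowing this image meets each connected component of $T_s$ in exactly one point does not make $h_s\colon S'_s\to T_s$ surjective when a component of $T_s$ is positive-dimensional, and surjectivity is needed for $h_s$ to be a universal homeomorphism. For instance $S=\Spec\F_p$, $T=(\A^1)_{\perf}$, $X=\Spec\F_p$ included at the origin satisfies every hypothesis as literally stated (perfectly proper over $S$, affine over $S$, bijection on $\pi_0$ of fibers) yet $g^*\colon\Gamma(T,\O)\to\Gamma(X,\O)$ is not even injective. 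The statement would be cleaner with ``$T\to S$ finite'' (equivalently, $T_s$ discrete for all geometric $s$) in place of ``affine''; this is what holds in the only application, Lemma \ref{lemma: fiberwise criterion}, where $T=\Spec K^G_0(X)_k$ is finite over $S=\s$.
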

\begin{proof}
Let $S' = \underline{\Spec}_{\O_S}(f_* \O_X)$. 
As $T$ is affine over $S$, the Stein factorization $S'$ naturally fits into
$$X \to S' \to T \to S.$$
We will prove that the corresponding map $\O_T \to \O_{S'}$ of $\O_S$-algebras is an isomorphism, which yields the result after pushforward. To this end, we can without loss of generality assume that $S$ and hence $T$ is affine.

Since $X \to T$ is perfectly proper, by Lemma \ref{lemma: perfect stein 1} it is enough to check that its geometric fibers are connected. But this can be checked separately over each geometric point $s \in S$, where it boils down to our assumption.
\end{proof}
\section{Perfect equivariant \texorpdfstring{$K$}{K}-theory}\label{section: perfect K-theory}
We now describe the main structural properties of equivariant $K$-theory of perfect schemes. Perfection changes $K$-theory in a controlled way by ``inverting the Frobenius" and this forces good descent properties. In fact, \cite{Kra80b, KM21, AMM22, Cou23} have already studied such questions for non-equivariant algebraic $K$-theory (see also \cite{EK18} for motivic results). We take advantage of their methods. 

Let $k = \F_p$. Unless stated otherwise, let $G_0$ be a group scheme over $k$ and $G$ its perfection. Note that $G$ is still a group scheme over $k$. It is affine if and only if $G_0$ is.

\subsection{Effect of perfection on equivariant \texorpdfstring{$K$}{K}-theory}
Perfection changes $K$-theory by inverting the Frobenius. 

\begin{lemma}\label{proposition: K-theory of inverse limit}
Let $X_0 \in \Sch^{\qcqs, G_0}_k$. Let $G$ and $X$ be the perfections. Then
\begin{equation*}
K^G(X) = \colim_{\varphi^*} K^{G_0}(X_0).    
\end{equation*}
\end{lemma}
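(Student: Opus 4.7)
The plan is to identify both sides as $K$-theory of the same stable $\infty$-category and to move perfection across $K$ using standard continuity.

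First, I would realize perfection as a cofiltered limit. By definition $X = \lim_\varphi X_0$ in $\Sch_k$, and likewise for $G$. By Proposition \ref{proposition: perfection of global quotient stacks}, perfection commutes with forming the global quotient, so
\begin{equation*}
G \backslash X \;=\; (G_0 \backslash X_0)_\perf \;=\; \lim_{\varphi} (G_0 \backslash X_0),
\end{equation*}
where the transition maps (iterated Frobenii) are affine. In particular this inverse system consists of qcqs algebraic stacks with affine transition maps.

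Second, I would invoke continuity of perfect complexes along cofiltered limits of qcqs (algebraic stacks with) affine transition maps: for such systems the natural comparison
\begin{equation*}
\colim_{\varphi^*} \Perf^{G_0}(X_0) \;=\; \colim_{\varphi^*} \Perf(G_0\backslash X_0) \;\xrightarrow{\ \simeq\ }\; \Perf(G \backslash X) \;=\; \Perf^G(X)
\end{equation*}
is an equivalence of stable idempotent-complete $\infty$-categories. For ordinary qcqs schemes this is Thomason--Trobaugh \cite[\S 3.20]{TT90}; the equivariant version reduces to the scheme case via a $G_0$-equivariant affine Zariski cover of $X_0$, descent on both sides, and the observation that any $G$-equivariant perfect complex on $X$ is, up to equivalence, pulled back from some finite Frobenius stage (compactness of $\Perf^{G_0}(X_0)$ inside $\D_{\Qcoh}^{G_0}(X_0)$ via the smooth atlas $X_0 \to G_0 \backslash X_0$). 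Here we use that $G_0$ is smooth so that $G_0 \backslash X_0$ admits a smooth cover by the qcqs scheme $X_0$.

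Third, I would pass from categories to spectra. Nonconnective algebraic $K$-theory commutes with filtered colimits of small stable idempotent-complete $\infty$-categories by \cite{BGT13} (this is also the content of the Thomason--Trobaugh continuity statement). Combining with the previous step,
\begin{equation*}
K^G(X) \;=\; K(\Perf^G(X)) \;=\; K\Bigl(\colim_{\varphi^*} \Perf^{G_0}(X_0)\Bigr) \;=\; \colim_{\varphi^*} K(\Perf^{G_0}(X_0)) \;=\; \colim_{\varphi^*} K^{G_0}(X_0),
\end{equation*}
which is the claimed identity.

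The only non-routine point is the equivariant continuity statement in the second step; I expect this to be the main technical obstacle, as the standard Thomason--Trobaugh result is phrased for schemes, not for global quotients where the acting group also varies along the tower. However, since $(\varphi,\varphi)$ is a compatible endomorphism of the pair $(G_0,X_0)$ and $X_0 \to G_0\backslash X_0$ is a smooth atlas whose Čech nerve is levelwise qcqs with affine Frobenius transitions, one recovers the equivariant statement from the non-equivariant one via descent for $\Perf$.
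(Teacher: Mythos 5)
Your proof follows essentially the same route as the paper's: first establish continuity of the category $\Perf^G$ along the Frobenius tower, then use that nonconnective $K$-theory commutes with filtered colimits of small idempotent-complete stable $\infty$-categories. This is exactly what the paper's terse citation of \cite[Prop.~3.20 and Thm.~3.20.1]{TT90} means — it passes to the categorical continuity statement first and only then applies $K^G$.

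One caveat on your parenthetical justification of the equivariant continuity step. You argue that a $G$-equivariant perfect complex on $X$ descends to a finite Frobenius stage by appealing to compactness of $\Perf^{G_0}(X_0)$ inside $\D_{\Qcoh}^{G_0}(X_0)$. For non-linearly-reductive $G_0$ (the case of interest, e.g.\ $G_0 = \GL_n$ in characteristic $p$), this fails: the paper itself notes that $BG_0$ is not flawless, its structure sheaf being perfect but not compact, and the same failure propagates to $G_0 \backslash X_0$. The descent-to-a-finite-stage claim is nonetheless true, but the correct justification is the elementary Thomason--Trobaugh approximation argument: a perfect complex is locally presented by finitely many matrices, and the comodule structure map over $\O_{G}$ is likewise finitely presented data, so everything involves only finitely many coordinate functions and hence arises at some finite stage; full faithfulness is handled by a similar approximation of homotopies. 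With that repair, your argument is complete and coincides with the paper's.
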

\begin{proof}
Algebraic $K$-theory $K(-)$ sends filtered inverse limits of qcqs schemes along affine maps to colimits by \cite[Proposition 3.20]{TT90}, giving the result for the non-equivariant $K$-theory of $X = \lim X_0$. The case of equivariant $K$-theory follows by the same argument, using \cite[Theorem 3.20.1]{TT90} first and only then passing to $K^G(-)$.
\end{proof}

Taking exterior powers furnishes equivariant $K$-groups with $\lambda$-operations $\lambda_j$ for $j \in \N_0$. We equip the sum of non-negative $K$-groups $\bigoplus_{i\geq 0} K^G_i(X)$ with the ring structure coming from the usual module structure over $K^G_0(X)$ and zero multiplication on the positive part. This is a standard definition in this context, making $\bigoplus_{i\geq 0} K^G_i(X)$ into a $\lambda$-ring by \cite[\S 2]{Koc98} and \cite[\S 5]{KZ21}. We denote its Adams operations by $\psi^j$ for $j \in \N_0$, see \cite[p. 102]{Weib13}. We then formally extend these Adams operations to negative $K$-groups by Bass delooping \cite[Theorem 6.6]{TT90}.

The following statement is well-known for $K$-theory of rings by \cite{Kra80a, Kra80b, Hil81} or \cite[Lemma 3.1.5]{Cou23}; the case of qcqs schemes follows by Zariski descent. We provide the arguments for the equivariant extension.\footnote{We thank Bernhard Köck for a detailed discussion of these arguments.}

\begin{lemma}\label{lemma: perfect k-theory and adams operations}
For each $i \in \Z$, the action of Frobenius pullback $\varphi^*$ on $K^{G_0}_i(X_0)$ agrees with the $p$-th Adams operation $\psi^p$. 
\end{lemma}

Before starting the proof, we recall the formalism of polynomial functors (over $k$) from \cite[\S 8]{HKT17}, we refer to their nice exposition for details. The category $\Vect(k)$ is enriched in schemes by regarding each $\Hom_{k}(V, W)$ for $V, W \in \Vect(k)$ as the affine space on the underlying vector space. A {\it polynomial functor over $k$} is an enriched endofunctor of $\Vect(k)$. In other words, it is an functor $F: \Vect(k) \to \Vect(k)$ such that each of the maps $\Hom_k(V, W) \to \Hom_k(F(V), F(W))$ on morphism spaces is given by a tuple of polynomials \cite[Definition 8.1]{HKT17}. Polynomial functors form an exact category $\Pol(k)$, equipped with tensor product and exterior power operations. There is a notion of polynomial functors of {\it homogeneous degree $d$} and we write $\Pol(k)_d$ for the corresponding full subcategory; we further write $\Pol(k)_{< \infty}$ for the full subcategory spanned by finite direct sums of homogeneous polynomial functors. Finally, $\Pol(k)^0$ stands for the full subcategory of polynomial functors sending the zero vector space to itself (and similarly for the variants).

In particular, the Frobenius pullback $\varphi^*$ as well as the exterior powers $\Lambda^j$ for $j \in \N_0$ can be regarded as objects of $\Pol(k)$. This can be done by hand from the definition: $\varphi^*$ is the identity on objects, but on morphisms it raises the polynomial coordinates to $p$-th powers (it is the actual Frobenius when we view the morphism spaces as schemes). For $\Lambda^j$, see \cite[Example 8.2]{HKT17}. Furthermore, $\varphi^*$ has degree $p$, while $\Lambda^j$ has degree $j$ and all of these preserve the zero object.

The Grothendieck group $K_0(\Pol(k)_{<\infty})$ is naturally a $\lambda$-ring, graded by the homogeneous degree. It in particular contains the elements $\varphi^*$ and $\lambda_j$, $j \in \N$ corresponding to the Frobenius and exterior powers. It is explicitly given as the free $\lambda$-ring on one variable (also known as the ring of symmetric functions)
\begin{equation*}
K_0(\Pol(k)_{<\infty}) \cong \Z[s_j \mid j\in \N], \qquad \lambda_j \mapsto s_j
\end{equation*}
by \cite[Theorem 8.10]{HKT17}. It is a free polynomial ring on the generators $s_j$ with $\deg s_j = j$, while $K_0(\Pol(k)^0_{<\infty})$ is its augmentation ideal. The degree $d$ part $K_0(\Pol(k)_d) \cong \Z[s_j \mid j\in \N]_d$ can be naturally described as the representation ring of the Schur algebra $\Gamma^d(\Mat(n, k))$ as in \cite[\S 8.B, Theorem 8.8]{HKT17}. This latter representation ring receives a natural surjection from $R_{k}(\GL_n)$ once $n \geq d$ so that $\lambda_j$ comes from the class of the $j$-th fundamental representation of $GL_n$, see \cite[Remark 8.17]{HKT17}, \cite[\S 3.8]{Ser68}.

\begin{proof}[Proof of Lemma \ref{lemma: perfect k-theory and adams operations}]
Consider the $\lambda$-ring $K_0(\Pol(k)^0_{<\infty})$.
The Adams operation $\psi^j$ for $j \in \N$ is by definition a linear combination of degree $j$ monomials in $\lambda$'s, so the elements $\varphi^*$ and $\psi^p$ lie in the graded degree $p$. They are equal: indeed, it is enough to check this on the level of operations on the representation ring $R_{k}(\GL_n)$, which naturally surjects onto $\Z[s_j \mid {j\in \N}]_d$ once $n \geq d$. On $R_{k}(\GL_n)$, the desired equality $\varphi^* = \psi^p$ holds by the classical results \cite[Corollary 3.7]{Kra80a}, also see \cite[proof of Proposition 5.4]{Kra80b}, \cite[Theorem 5.1]{Hil81}.

We conclude via arguments from \cite{KZ21, HKT17}, who develop a method for reducing statements about $\lambda$-operations on higher equivariant $K$-groups to computations in the $\lambda$-ring $K_0(\Pol(k)^0_{<\infty})$. In fact, we can completely follow the reasoning from \cite[proof of Theorem 5.1]{KZ21}, which employs this method (to prove a different equality).

In more detail, to prove an equality of operations on $K^G_i(X)$ for any given $i \geq 0$, it is enough to prove it inside
$K_0(\End((B^q)^i \Vect^G(X))),$
the zeroth $K$-group of the exact category of endofunctors of a suitable exact category $(B^q)^i \Vect^G(X)$ of $i$-dimensional bounded acyclic binary multicomplexes, defined from $\Vect^G(X)$ inductively on $i$ -- see \cite[Definition 1.1]{HKT17} for the notation. This reduction step is explained in \cite[proof of Theorem 5.1]{KZ21} or \cite[proof of Theorem 8.18]{HKT17} (where the notation $\mathcal{P}(G, X) = \Vect^G(X)$ is used): there is a natural surjection $K_0((B^q)^i \Vect^G(X)) \twoheadrightarrow K^G_i(X)$; testing on classes of objects in the left-hand side we reduce to checking the equality inside $K_0(\End((B^q)^i \Vect^G(X)))$.

Furthermore, by \cite[(5.3)]{KZ21}, there is an exact functor
\begin{equation*}
\Pol^0_{<\infty}(k) \to \End((B^q)^i \Vect^G(X)).
\end{equation*}
Consider the induced homomorphism on $K_0$. Both $\varphi^*$ and $\psi^p$ exist and are equal as elements in $K_0(\Pol_{<0}(k))$ by above; they realize the desired operations under this map. Therefore, the same holds in $K_0$ of the right-hand side and we conclude that $\varphi^* = \psi^p$ on all higher equivariant $K$-groups by the previous paragraph. The case of negative $K$-groups then follows by Bass delooping (by our definition of $\psi^p$).   
\end{proof}

In other words, equivariant $K$-theory of $X$ is obtained from $K^{G_0}(X_0)$ by localizing $\psi^p$:
\begin{equation*}
    K^{G}(X) = \colim_{\psi^p} K^{G_0}(X_0).
\end{equation*}

The following is a special case of Lemma \ref{lemma: perfect k-theory and adams operations} in degree zero after $k$-linearization.

\begin{lemma}[Commuting with perfections]\label{proposition: k-theory commutes with perfections}
Given $X_0 \in \Sch_k^{\qcqs, G_0}$, we have
\begin{equation*}
   K^G_0(X)_k = \left( K^{G_0}_0(X_0)_k \right)_{\perf}
\end{equation*}
\end{lemma}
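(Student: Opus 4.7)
The plan is to reduce the statement to the purely algebraic claim that on $K^{G_0}_0(X_0)_k$, the $p$-th Adams operation $\psi^p$ coincides with the absolute Frobenius endomorphism $\varphi$ (i.e.\ the $p$-th power map). Once this is established, the result follows by chasing filtered colimits.

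First, by Lemma \ref{proposition: K-theory of inverse limit} together with Remark \ref{remark: perfect k-theory and adams operations}, we have
\begin{equation*}
   K^G(X) = \colim_{\psi^p} K^{G_0}(X_0)
\end{equation*}
as a sequential colimit of spectra. Taking $\pi_0$ and tensoring over $\Z$ with $k$ both commute with filtered colimits, so
\begin{equation*}
   K^G_0(X)_k = \colim_{\psi^p} K^{G_0}_0(X_0)_k,
\end{equation*}
where the transition map is $\psi^p$ on the $k$-linearized ring. On the other hand, by definition of perfection of an $\F_p$-algebra,
\begin{equation*}
   (K^{G_0}_0(X_0)_k)_\perf = \colim_{\varphi} K^{G_0}_0(X_0)_k.
\end{equation*}
Thus it suffices to identify the transition maps $\psi^p$ and $\varphi$ on $K^{G_0}_0(X_0)_k$.

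To see this, note first that on the class of a $G_0$-equivariant line bundle $\eL$ we have $\psi^p([\eL]) = [\eL^{\otimes p}] = [\eL]^p$ by the defining property of $\psi^p$. For a general class $c = [\eE] \in K^{G_0}_0(X_0)_k$ represented by a $G_0$-equivariant vector bundle, apply the equivariant splitting principle (Lemma \ref{lemma: equivariant splitting principle}) to produce an equivariant $f: Y_0 \to X_0$ with $f^*: K^{G_0}_0(X_0)_k \hookrightarrow K^{G_0}_0(Y_0)_k$ injective and $f^*[\eE] = \sum_{i=1}^n [\eL_i]$. Since $\psi^p$ is a ring endomorphism natural in equivariant maps,
\begin{equation*}
   f^*\psi^p([\eE]) = \sum_{i=1}^n [\eL_i]^p.
\end{equation*}
On the other hand, in characteristic $p$ the Frobenius $\varphi(x) = x^p$ is additive (freshman's dream), hence
\begin{equation*}
   f^*\varphi([\eE]) = \Bigl(\sum_{i=1}^n [\eL_i]\Bigr)^p = \sum_{i=1}^n [\eL_i]^p = f^*\psi^p([\eE]).
\end{equation*}
Injectivity of $f^*$ then gives $\psi^p([\eE]) = \varphi([\eE])$. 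Since classes of vector bundles generate $K^{G_0}_0(X_0)_k$ as an abelian group and both $\psi^p$ and $\varphi$ are additive, the two endomorphisms agree on all of $K^{G_0}_0(X_0)_k$.

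Combining these observations yields
\begin{equation*}
   K^G_0(X)_k = \colim_{\psi^p} K^{G_0}_0(X_0)_k = \colim_{\varphi} K^{G_0}_0(X_0)_k = (K^{G_0}_0(X_0)_k)_\perf,
\end{equation*}
as desired. The only subtle point is the identification $\psi^p = \varphi$ on $k$-linearized $K_0$; everything else is formal manipulation of filtered colimits.
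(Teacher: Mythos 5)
Your proposal is correct and follows essentially the same route as the paper's proof: both reduce via filtered colimits (commuting $\pi_0$ and $\otimes_\Z k$ with the Frobenius colimit) to the purely algebraic identification of the Frobenius pullback $\varphi^* = \psi^p$ with the $p$-th power map on the $\F_p$-algebra $K^{G_0}_0(X_0)_k$, and both establish this identity by first handling line bundles via $\psi^p([\eL]) = [\eL^{\otimes p}] = [\eL]^p$ and then invoking the equivariant splitting principle plus freshman's dream in characteristic $p$. The only cosmetic difference is that you phrase the key claim directly in terms of the Adams operation $\psi^p$ while the paper phrases it in terms of the geometric Frobenius pullback $\varphi^*$, but the two are identified already in Remark \ref{remark: perfect k-theory and adams operations}, so there is no mathematical divergence.
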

\begin{proof}
The left adjoint $(-)_k = (-\underset{\Z}{\otimes} \F_p)$ commutes with colimits, so by Lemma \ref{proposition: K-theory of inverse limit} we can rewrite the left-hand side as
\begin{equation*}
    \colim_{\varphi^*} \left( K^{G_0}_0(X_0)_k \right).
\end{equation*}
We now claim that the maps in this colimit are given by taking $p$-th powers. If this is the case, the colimit becomes $\left( K^{G_0}_0(X_0)_k \right)_{\perf}$ by definition. It thus suffices to prove Claim \ref{claim: frobenius and p-the powers}.
\end{proof}

\begin{claim}\label{claim: frobenius and p-the powers}
The Frobenius pullback $\varphi^*$ on $K^{G_0}_0(X_0)_k$ is given by taking $p$-th powers $c \mapsto c^p$.
\end{claim}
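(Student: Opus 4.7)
The strategy is to reduce to the line bundle case via the equivariant splitting principle, then exploit that the absolute Frobenius pulls back a line bundle to its $p$-th tensor power. The crucial algebraic input is that in a commutative $\F_p$-algebra, the $p$-th power map is a ring homomorphism.

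First I would observe that both $\varphi^*$ and $c \mapsto c^p$ are ring endomorphisms of $K^{G_0}_0(X_0)_k$. For $\varphi^*$ this follows because pullback of perfect complexes is a symmetric monoidal exact functor, so $\varphi^*$ is additive and multiplicative on $K^{G_0}_0(X_0)$, hence on its $k$-linearization. For $(-)^p$ this is the Frobenius of the commutative $\F_p$-algebra $K^{G_0}_0(X_0)_k$. Moreover, both endomorphisms are natural under equivariant pullback $f^*$ along any $G_0$-equivariant morphism $f : Y_0 \to X_0$.

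Next I would check agreement on classes of $G_0$-equivariant line bundles. For any such $\eL$, the isomorphism $\varphi^* \eL \cong \eL^{\otimes p}$ is transparent from the cocycle description (the Frobenius sends $\{g_{ij}\}$ to $\{g_{ij}^p\}$) and is compatible with the $G_0$-equivariant structure by naturality of Frobenius with respect to the action map $G_0 \times X_0 \to X_0$. In $K^{G_0}_0(X_0)_k$ this reads as $\varphi^*[\eL] = [\eL]^p$.

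Finally I would bootstrap from line bundles to arbitrary classes using Lemma \ref{lemma: equivariant splitting principle}. Given $c \in K^{G_0}_0(X_0)_k$, write $c = [\eE_1] - [\eE_2]$ for some $G_0$-equivariant vector bundles, and apply the splitting principle successively (first to $\eE_1$, then to the pullback of $\eE_2$) to produce an equivariant morphism $f : Y_0 \to X_0$ with $f^*$ injective on the $k$-linearized equivariant $K_0$ and $f^*(c)$ a $\Z$-linear combination of classes of line bundles on $Y_0$. Since $\varphi^*$ and $(-)^p$ are ring homomorphisms that commute with $f^*$ and agree on all line bundle classes, they agree on $f^*(c)$, and by injectivity of $f^*$ also on $c$. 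The only mildly subtle point is the equivariance check in the line bundle step, which reduces to naturality of the Frobenius; once it is in place, the rest is formal.
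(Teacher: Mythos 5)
Your proof is correct and is essentially the same as the paper's: both use the equivariant splitting principle (Lemma \ref{lemma: equivariant splitting principle}) to reduce to sums of line bundle classes, both rest on the identity $\varphi^*\eL \cong \eL^{\otimes p}$ for equivariant line bundles, and both close the argument via the Frobenius ring-homomorphism structure of the $\F_p$-algebra $K^{G_0}_0(X_0)_k$. Your write-up is slightly more careful in one respect: the paper nominally only treats $c = [\eE]$ for a vector bundle $\eE$ and leaves the reduction from general $c = [\eE_1]-[\eE_2]$ implicit, whereas you address it explicitly by applying the splitting principle to both bundles and invoking that two ring endomorphisms agreeing on a generating set agree on the subring they generate.
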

\begin{proof}[Proof of claim.]
Let $c$ be the class of a $G_0$-equivariant vector bundle $\eE$ of rank $n$ on $X_0$. Then we use Lemma \ref{lemma: equivariant splitting principle}: there exists a $G_0$-equivariant morphism $f: Y_0 \to X_0$ such that $f^*: K^{G_0}_0(X_0)_k \to K^{G_0}_0(Y_0)_k$ is injective and $f^*[\eE] = \sum_i^n [\eL_i]$ for some $G_0$-equivariant line bundles $\eL_i$. To prove $\varphi^*[\eE] = [\eE]^p$, we can thus wlog assume that $[\eE]$ is a sum of classes of equivariant line bundles. 

First assume that $c$ is a class of a $G_0$-equivariant line bundle $\eL$. Then $\varphi^* \eL \cong \eL^{\otimes p}$ with its canonical $G_0$-equivariant structure. Indeed, this can be seen explicitly on the level of transition functions; see \cite[Lemma 3.5]{BS16} for the non-equivariant statement.
Since we have base changed to $k =\F_p$, the same holds true for sums of classes of line bundles. Indeed,
\begin{equation*}
    \varphi^*: (c_1 + \dots + c_n) \mapsto (c_1^p + \dots + c_n^p) = (c_1 + \dots + c_n)^p.
\end{equation*}
\end{proof}

\subsection{Kratzer's \texorpdfstring{$p$}{p}-divisibility of perfect higher \texorpdfstring{$K$}{K}-theory}
The interpretation of the Frobenius pullback on $K$-theory as the $p$-th Adams operation $\psi^p$ implies $p$-divisibility of higher $K$-theory of perfect schemes \cite{Kra80b}. This is an important structural result, which immediately generalizes to equivariant $K$-theory.

\begin{lemma}\label{lemma: kratzers argument}\label{lemma: kartzers lemma}
Let $X \in \Sch^{\pfp, G}_k$. Then for all $i \geq 1$, $K^G_i(X)$ is a $\Z[\tfrac{1}{p}]$-module.
\end{lemma}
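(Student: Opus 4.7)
The plan is to combine Lemma~\ref{proposition: K-theory of inverse limit} with Remark~\ref{remark: perfect k-theory and adams operations} to reduce the statement to a classical fact about Adams operations. Taking $\pi_m$ of the colimit identification yields
\begin{equation*}
K^G_m(X) \;=\; \colim\bigl( K^{G_0}_m(X_0) \xrightarrow{\psi^p} K^{G_0}_m(X_0) \xrightarrow{\psi^p} \cdots\bigr),
\end{equation*}
so it suffices to show that $p$ acts invertibly on this filtered colimit, equivalently that for each class $c \in K^{G_0}_m(X_0)$ some iterate $(\psi^p)^N c$ becomes divisible by $p$ in $K^{G_0}_m(X_0)$.

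This is precisely the content of Kratzer's theorem \cite{Kra80} in the non-equivariant case: for $m \geq 1$ the $p$-th Adams operation on $K_m$ is congruent to $p^m \cdot \id$ modulo the $\gamma$-filtration, which is nilpotent on each individual class, so $(\psi^p)^N c$ eventually becomes divisible by $p$. I would run the same argument equivariantly. Adams operations and the $\gamma$-filtration make sense on $K^{G_0}(X_0)$ whenever $K^{G_0}_0(X_0)$ is a $\lambda$-ring acting compatibly on the higher $K$-groups, which is standard for reductive $G_0$ and qcqs $X_0$. The key congruence $\psi^p \equiv p^m \pmod{\gamma^{>m}}$ can be checked on equivariant line-bundle classes via the splitting principle of Lemma~\ref{lemma: equivariant splitting principle}, since $\psi^p[\eL] = [\eL]^p$ holds manifestly for line bundles.

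The step I expect to be the main obstacle is controlling the $\gamma$-filtration on $K^{G_0}_m(X_0)$ for arbitrary pfp $X_0$, without smoothness assumptions. A clean workaround is to observe that the colimit argument only requires $\psi^p$ to eventually factor through $p$ on each individual class; this can be verified class by class using the splitting principle together with naturality of $\psi^p$, bypassing any uniform boundedness statement for the $\gamma$-filtration.
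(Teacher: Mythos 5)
Your plan diverges from the paper at a crucial point, and the divergence is where the gap lies. You recall Kratzer's result as a congruence $\psi^p \equiv p^m \pmod{\gamma^{>m}}$ together with class-wise nilpotence of the $\gamma$-filtration, and then you explicitly flag that controlling the $\gamma$-filtration on $K^{G_0}_m(X_0)$ for singular pfp $X_0$ is the main obstacle. That obstacle is not a technicality you can "bypass class by class via the splitting principle": neither nilpotence of the $\gamma$-filtration on higher equivariant $K$-groups of singular schemes nor a class-wise substitute is established anywhere in the paper or in the cited literature, and the splitting principle controls $K_0$-classes of vector bundles, not arbitrary elements of $K_m$ for $m\geq 1$. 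So as written, the argument has a genuine hole.

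What the paper actually uses from \cite{Kra80} is the \emph{exact} identity from Lemma~5.2 / Proposition~5.3(i): on $K^G_m(X)$ for $m \geq 1$ one has
\begin{equation*}
\psi^p \;=\; (-1)^{p-1}\, p\, \lambda^p,
\end{equation*}
with no error term and no filtration. The inputs are only that $K^G(X)$ is a special $\lambda$-ring via exterior powers, verified by the equivariant splitting principle (Lemma~\ref{lemma: equivariant splitting principle}); no $\gamma$-filtration nilpotence enters. Once you have this exact identity, the paper does not even pass through the colimit description of Lemma~\ref{proposition: K-theory of inverse limit}: since $G\backslash X$ is perfect, $\varphi^* = \psi^p$ is already an isomorphism on $K^G_m(X)$, and the identity then forces multiplication by $p$ to be invertible. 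Your colimit route would also work once you replace the congruence by the exact identity (then a single application of $\psi^p$ lands in $pK^{G_0}_m(X_0)$, and $p$-invertibility of the colimit is immediate), but you should not invoke $\gamma$-filtration nilpotence at all — it is both unnecessary and not available.
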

\begin{proof}
This is a direct generalization of the non-equivariant statement \cite[Corollary 5.5 and Example (1) below it]{Kra80b}; see also \cite[\S II.4 and \S IV.5]{Weib13}. We review Kratzer's proof.   

Since the multiplication on $\bigoplus_{i \geq 0} K^G_i(X)$ compatible with the $\lambda$-ring structure is trivial in positive degrees, Lemma \ref{lemma: perfect k-theory and adams operations} and \cite[Proposition 5.3.(i)]{Kra80b} show that on $K^G_i(X)$ with $i \geq 1$ we have
\begin{equation}\label{equation: Frobenius pullback and lambda operation}
\varphi^* = \psi^p = (-1)^{p-1} \cdot p \cdot \lambda^p.    
\end{equation}
Since $G \backslash X$ is perfect, its Frobenius $\varphi$ is an isomorphism, hence $\varphi^*$ is an isomorphism. By \eqref{equation: Frobenius pullback and lambda operation} and additivity of $\lambda^p$, the multiplication by $p$ needs to be isomorphism as well. Hence $K^G_i(X)$ is a $\Z[\tfrac{1}{p}]$-module for all $i \geq 1$.
\end{proof}

Lemma \ref{lemma: kratzers argument} in particular shows the following vanishing after $k$-linearization.
\begin{corollary}\label{corollary: kratzers vanishing with k-coefficients}
Let $X \in \Sch^{\pfp, G}_k$, then
\begin{itemize}
    \item $K^G_i(X, k) = 0$ for $i \geq 2$,
    \item $K^G_1(X, k) = K^G_0(X) \otimes^1_{\Z} k$.
\end{itemize}
\end{corollary}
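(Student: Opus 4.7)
The plan is to combine Lemma \ref{lemma: kartzers lemma} with the universal coefficient formula for $K$-theory with $\F_p$-coefficients recorded in the notation section, namely
\begin{equation*}
    K^G_i(X, k) \cong \bigl(K^G_i(X) \otimes^0_{\Z} k\bigr) \oplus \bigl(K^G_{i-1}(X) \otimes^1_{\Z} k\bigr).
\end{equation*}
So the whole argument is a short diagram chase on this direct sum decomposition, once we know that $K^G_m(X)$ is a $\Z[\tfrac{1}{p}]$-module for all $m \geq 1$.

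First I would treat the case $i \geq 2$. For the first summand, $K^G_i(X)$ is a $\Z[\tfrac{1}{p}]$-module by Lemma \ref{lemma: kartzers lemma}, so $p$ acts invertibly on it; tensoring with $\F_p = \Z/p$ therefore yields $0$. For the second summand, $i - 1 \geq 1$, so the same lemma applies to $K^G_{i-1}(X)$: it is in particular $p$-torsion free and $p$-divisible, hence $\Tor^{\Z}_1(K^G_{i-1}(X), \F_p) = K^G_{i-1}(X) \otimes^1_{\Z} k$ vanishes. Both summands are zero, which gives the first bullet.

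For $i = 1$, the same reasoning applied to $K^G_1(X)$ kills the first summand, while the second summand reads $K^G_0(X) \otimes^1_{\Z} k$ by definition. This yields the second bullet.

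There is no real obstacle here: the content is entirely in Lemma \ref{lemma: kartzers lemma}, and the corollary is just the resulting computation of the two Tor-summands in the universal coefficient expression for $K^G(-, k)$.
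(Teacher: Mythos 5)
Your argument is correct and is essentially the paper's own proof: the paper likewise invokes Lemma \ref{lemma: kartzers lemma} (that $K^G_m(X)$ is a $\Z[\tfrac{1}{p}]$-module for $m\geq 1$) and then reads off both bullets from the universal coefficient decomposition $K^G_j(X,k) = (K^G_j(X)\otimes^0_{\Z}k)\oplus(K^G_{j-1}(X)\otimes^1_{\Z}k)$. You merely spell out the vanishing of each summand in more detail, which is fine.
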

\begin{proof}
For all $i \geq 1$, we know that $K^G_i(X)$ is a $\Z[\tfrac{1}{p}]$-module by Lemma \ref{lemma: kartzers lemma}. The result follows by the universal coefficient theorem
$K^G_j(X, k) = (K^G_j(X) \otimes^0_{\Z} k ) \oplus (K^G_{j-1}(X) \otimes^1_{\Z} k ).$
\end{proof}
In fact, we don't know examples where $K^G_1(X, k) \neq 0$. In the non-equivariant setup, this is always zero for weight reasons -- see Observation \ref{lemma: perfect trace map for trivial group}.

\subsection{Perfect \texorpdfstring{$K$}{K}-theory and \texorpdfstring{$G$}{G}-theory agree}
After perfection, $K$-theory agrees with $G$-theory.
\begin{observation}[Perfect $K$-theory and $G$-theory]\label{lemma: K-theory and G-theory of perfect schemes}
    Let $X \in \Sch^{\pfp, G}_k$. Then there is a homotopy equivalence of K-theory spectra
    \begin{align*}
        K^G(X) & \simeq G^G(X).
    \end{align*}
\end{observation}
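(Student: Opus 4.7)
The plan is to exploit the cofiltered presentation $X = \lim_\varphi X_0$ and compare both sides to colimits over Frobenius iterations. First, I would apply Lemma \ref{proposition: K-theory of inverse limit} to write $K^G(X) \simeq \colim_{\varphi^*} K^{G_0}(X_0)$. The proof of that lemma (which uses the Thomason--Trobaugh continuity result \cite[Theorem 3.20.1]{TT90}) extends with only minor changes from perfect complexes to bounded pseudo-coherent complexes, yielding an analogous formula
\begin{equation*}
G^G(X) \simeq \colim G^{G_0}(X_0),
\end{equation*}
where the transition maps come from an appropriate Frobenius operation on equivariant $G$-theory.

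Next, I would observe that the Frobenius $\varphi: X_0 \to X_0$ is a finite radicial surjection of reduced Noetherian schemes, hence a universal homeomorphism. By Quillen's devissage theorem—more precisely, the invariance of $G$-theory under universal homeomorphisms, which is essentially \cite[\S 3]{TT90}—Frobenius acts as an equivalence on $G^{G_0}(X_0)$. Consequently the $G$-theory colimit collapses to $G^G(X) \simeq G^{G_0}(X_0)$ for any finite-type model.

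The natural transformation $K \to G$ now identifies our map with the colimit of comparison maps $K^{G_0}(X_0) \to G^{G_0}(X_0)$, taken along Frobenius pullback on $K$-theory (which acts as the $p$-th Adams operation by Remark \ref{remark: perfect k-theory and adams operations}). Finally, I would invoke the non-equivariant statement $K \simeq G$ on perfect pfp schemes established in \cite{KM21, AMM22, Cou23}—the fiber of $K \to G$ is a truncating invariant controlled by the singular locus, and the Frobenius colimit kills this contribution—and bootstrap it to the equivariant setting. The equivariant case then follows from the non-equivariant one via the quotient stack formalism: since perfection commutes with stack quotients by Proposition \ref{proposition: perfection of global quotient stacks} and the Frobenius descends, the equivariant comparison reduces to the non-equivariant one on $G \backslash X$.

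The hard part will be the last step, namely showing that the fiber of $K \to G$ becomes trivial after inverting Frobenius. This is the substantive content and relies on the existing non-equivariant results in the perfect-schemes literature; the main equivariant subtlety is ensuring compatibility with devissage when the action is by a (perfected) reductive rather than merely toric group, which is handled by passing throughout to the quotient stack.
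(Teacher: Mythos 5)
Your proposal takes a substantially different and, unfortunately, flawed route. The paper's proof is categorical and avoids any colimit comparison altogether: by \cite[Proposition 11.31, Remark 11.32]{BS16}, every object of $D^b_{\Qcoh}(X)$ on a perfect scheme $X$ has finite Tor-amplitude, so by \cite[Proposition 2.2.12]{TT90} the category of perfect complexes coincides with that of bounded pseudo-coherent complexes. Thus $\Perf^G(X)$ and the equivariant analogue of $D^b_{\mathrm{pscoh}}$ are literally the same category, and the identification $K^G(X) \simeq G^G(X)$ follows with no further work.

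Your approach has two genuine gaps. The first is the continuity step for $G$-theory: Thomason--Trobaugh continuity along a filtered inverse system of affine transition maps is stated for perfect complexes in \cite[Prop. 3.20.1]{TT90}, and the $G$-theory version \cite[Prop. 3.20.2]{TT90} requires the transition maps to be \emph{flat}, precisely because pullback of a coherent sheaf along a non-flat map need not be coherent (or bounded). The Frobenius $\varphi\colon X_0 \to X_0$ is flat if and only if $X_0$ is regular, so for a singular model the hypothesis fails and there is no formula $G^G(X) \simeq \colim G^{G_0}(X_0)$ available by this route. The second gap is the claim that Frobenius acts as an equivalence on $G^{G_0}(X_0)$ by devissage and "invariance under universal homeomorphisms": $G$-theory is invariant under nilpotent thickenings, but Frobenius on a reduced finite-type $\F_p$-scheme is not a nilpotent thickening, and the transfer $\varphi_*$ is not an equivalence in general (already on $\A^1_{\F_p}$ it acts by multiplication by $p$ on $G_0 = \Z$). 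So the asserted collapse of the colimit does not occur. Once these two steps fail, the reduction to the non-equivariant fiber of $K \to G$ never gets started. I would recommend looking up the paper's one-paragraph argument via finite Tor-amplitude — it sidesteps all of the above.
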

\begin{proof}
    By \cite[Proposition 11.31, Remark 11.32]{BS16}, any complex in $D^b_{\Qcoh}(X)$ has finite Tor amplitude. By \cite[Proposition 2.2.12]{TT90}, it follows that the notions of perfect complexes and pseudo-coherent complexes with locally bounded cohomology agree on $X$. Hence perfect complexes of globally finite Tor-amplitude and pseudo-coherent complexes with globally bounded cohomology agree, so \cite[Definition 3.1]{TT90} and \cite[Definition 3.3]{TT90} agree, proving the statement for the trivial group $G$. Also see \cite[Theorem 3.21]{TT90}.
    The same reasoning works with any $G$, adding ``$G$-equivariant complex" everywhere in the proof.    
\end{proof}

\subsection{Equivariant perfect proper excision}
Algebraic $K$-theory usually doesn't enjoy proper excision, the best general supplement being the pro-excision of \cite{Mor12, KST16}. Forcing it on all $\Sch_k^{\qcqs}$ results in $KH$ by \cite[Theorem 6.3]{KST16}.

However, for pfp perfect schemes, $K$-theory satisfies proper excision \cite[Proof of Theorem 4.3]{KM21}. We conjecture that perfect proper excision works $G$-equivariantly, and we prove it in the $T$-equivariant case in Theorem \ref{lemma: homotopy fiber square for K_T}. This is one of the main computational tools for our results.

\begin{setup}\label{setup: perfect abstract blowup squares}
Let $k=\F_p$.
Let $G_0$ be a reductive group over $k$ and suppose we are given an abstract blowup square of $G_0$-equivariant schemes in $\Sch_k^{\fp}$:
\begin{equation}\label{equation: abstract blowup square}
    \begin{tikzcd}
        Y_0 \arrow[d, "f_0"] \arrow[r, hookleftarrow] & E_0 \arrow[d, "f'_0"] \\
        X_0 \arrow[r, hookleftarrow] & Z_0
    \end{tikzcd}
\end{equation}
Passing to perfections (by dropping the subscript ``$0$") we obtain a {\it perfect abstract blowup square} of $G$-equivariant pfp perfect schemes over $k$:
\begin{equation}\label{equation: perfect abstract blowup square}
    \begin{tikzcd}
        Y \arrow[d, "f"] \arrow[r, hookleftarrow] & E \arrow[d, "f'"] \\
        X \arrow[r, hookleftarrow] & Z
    \end{tikzcd}
\end{equation}
\end{setup}

Let $T_0 =\Gm^n$ and $T$ its perfection. Before we embark on proving $T$-equivariant proper excision, we record the following useful lemma.
\begin{lemma}\label{lemma: perfectness of perfect torus quotients}
Let $k = \F_p$. Let $T_0= \Gm^n$ be a split torus acting on $X_0 \in \Sch_k^{\qcqs}$. Let $T$ and $X$ be the corresponding perfections. Then the stack $T \backslash X$ is flawless.
\end{lemma}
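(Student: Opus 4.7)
The plan is to verify the two conditions of flawlessness---affine diagonal, and the identification $\Ind(\Perf(T \backslash X)) \simeq \D_{\Qcoh}(T \backslash X)$---in turn. For the affine diagonal, I first observe that for any affine group scheme $H$ acting on a qcqs scheme $Y$, the diagonal of $[Y/H]$ pulled back along the smooth atlas $Y \times Y \to [Y/H] \times [Y/H]$ becomes the action morphism $H \times Y \to Y \times Y$, $(h, y) \mapsto (y, hy)$, which is affine over $Y \times Y$. Since $T$ is the perfection of the affine group $T_0 = \Gm^n$, it remains an affine group scheme by stability of affineness under perfection, and the argument applies to $T$ acting on $X$.

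For the second condition, my plan is to use Proposition \ref{proposition: perfection of global quotient stacks} to present
\begin{equation*}
    T \backslash X \;=\; (T_0 \backslash X_0)_{\perf} \;=\; \lim_{\varphi}\, T_0 \backslash X_0
\end{equation*}
as a cofiltered limit along the $T_0$-equivariant Frobenius, with affine bonding maps. Each classical stack $T_0 \backslash X_0$ is flawless, as $T_0 = \Gm^n$ is diagonalizable and therefore linearly reductive in every characteristic. It then remains to verify that flawlessness is preserved under this particular limit, which I would do by establishing the two compatibilities
\begin{align*}
    \Perf(T \backslash X) &\;=\; \colim_{\varphi}\, \Perf(T_0 \backslash X_0),\\
    \D_{\Qcoh}(T \backslash X) &\;=\; \colim_{\varphi}\, \D_{\Qcoh}(T_0 \backslash X_0).
\end{align*}
The first is the argument already used in Lemma \ref{proposition: K-theory of inverse limit}. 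Given both compatibilities, $\Ind$-completion commutes with filtered colimits of small stable idempotent-complete $\infty$-categories, so combining them with the flawlessness of each $T_0 \backslash X_0$ yields the desired equivalence.

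The hard part will be the second compatibility $\D_{\Qcoh}(T \backslash X) \simeq \colim_{\varphi}\, \D_{\Qcoh}(T_0 \backslash X_0)$. My plan is to reduce, via smooth descent along the atlas $X \to T \backslash X$ together with compatible descent data along the Frobenius tower, to the analogous statement for qcqs schemes. For $X_0 = \Spec A_0$ affine, this in turn becomes the standard identification $\D(\colim_{\varphi} A_0) \simeq \colim_{\varphi}\, \D(A_0)$ of derived module $\infty$-categories with filtered colimits of rings.
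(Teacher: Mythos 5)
Your proposal follows essentially the same route as the general (qcqs) case of the paper's proof: establish flawlessness of each $T_0\backslash X_0$ via linear reductivity of $T_0$ (the paper cites \cite[Theorem 1.40, Example 1.41.(ii)]{Kha20}), present $T\backslash X$ as the Frobenius limit, and pass through the colimit using that $\Ind$ preserves filtered colimits. The paper additionally records a separate direct argument for quasi-projective $X_0$ (showing $BT$ is flawless and then invoking \cite[Propositions 3.9, 3.21 and Corollary 3.22]{BFN10}); you skip this and go straight to the general case, which is fine.

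One place to be careful, which your plan flags but does not fully resolve: the compatibility $\D_{\Qcoh}(T \backslash X) \simeq \colim_{\varphi}\, \D_{\Qcoh}(T_0 \backslash X_0)$ does not immediately follow by "reducing via descent to schemes." Descent expresses $\D_{\Qcoh}(T \backslash X)$ as the totalization $\lim_{[n] \in \Delta} \D_{\Qcoh}(T^n \times X)$, and each Čech term commutes with the Frobenius colimit by the scheme case; but a cosimplicial totalization is an infinite limit and does not in general commute with filtered colimits in $\Pr^L$. The paper's phrase ``$\D_{\Qcoh}(-)$ turn the Frobenius limit to colimit'' is stated without justification and suffers from the same gap. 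To close it, you either need a theorem identifying $\D_{\Qcoh}$ of a cofiltered limit of qcqs algebraic stacks along affine maps with the colimit (the stacky analogue of the scheme-theoretic statement you invoke for affines), or you should exploit linear reductivity of $T_0$ more directly---e.g.\ identify $\D_{\Qcoh}(T_0 \backslash X_0)$ with the derived category of the abelian category $\Qcoh^{T_0}(X_0)$ of equivariant sheaves, verify that these abelian categories commute with the filtered Frobenius colimit, and then pass to derived categories. Either fix is reasonable, but as written the descent reduction conceals the genuine content of the claim.
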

\begin{proof}
Let us first prove the case of quasi-projective $X_0$ following \cite[Corollary 3.22]{BFN10}.
First note that $BT = T \backslash \pt$ is flawless. Clearly, it has affine diagonal $T$. 
Both dualizable (perfect) and compact objects are given by finite-dimensional representations of the affine multiplicative group scheme $T$. These also generate, so $BT$ is flawless by say \cite[Proposition 3.9]{BFN10}.

Since $X_0$ is a quasi-projective $k$-variety, it is qcqs and possesses an ample family of line bundles; the same follows for $X$ by \cite[Lemmas 3.4 and 3.6]{BS16}. 
Now apply \cite[Proposition 3.21]{BFN10} to the morphism $T\backslash X \to T \backslash \pt$ of algebraic stacks as in the proof of \cite[Corollary 3.22]{BFN10}.

More generally, this works for any qcqs $X_0$. Since $T_0$ is of multiplicative type, \cite[Theorem 1.40, Example 1.41.(ii)]{Kha20} show that the global quotient $T_0 \backslash X_0$ of a qcqs scheme $X_0$ by $T_0$ is flawless. Since $\Perf(-)$, $D_{\Qcoh}(-)$ turn the Frobenius limit to a colimit and $\Ind(-)$ commutes with colimits (being left adjoint), we deduce $T \backslash X$ is flawless as well.
\end{proof}

\begin{remark}
If $G_0 = GL_n$ with $n \geq 2$ over $k$, then $BG_0 = G_0 \backslash \pt$ is not flawless: its structure sheaf is non-compact. This is the technical point that causes problems.   
\end{remark}

We are now ready to prove the $T$-equivariant case.
\begin{theorem}\label{lemma: homotopy fiber square for K_T}
Consider a $T$-equivariant perfect abstract blowup square of pfp schemes from Setup \ref{setup: perfect abstract blowup squares}.
Then the following square of equivariant K-theory spectra is a homotopy fiber square.
\begin{equation}\label{equation: fibered square of K-theory spectra}
    \begin{tikzcd}
        K^T(Y) \arrow[r] & K^T(E) \\
        K^T(X) \arrow[r] \arrow[u] & K^T(Z) \arrow[u]
    \end{tikzcd}
\end{equation}
In particular, we get the long exact sequence:
\begin{equation}\label{equation: long exact proper excision sequence in perfect T-equivariant k-theory}
        \dots \to K^T_1(E) \to K^T_0(X) \to K^T_0(Y) \oplus K^T_0(Z) \to K^T_0(E) \to K^T_{-1}(X) \to \dots
\end{equation}
The same conclusion holds if we replace $K$-theory by any localizing invariant $\E$.
\end{theorem}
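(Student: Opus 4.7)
The plan is to adapt the non-equivariant perfect proper excision argument of Kelly--Morrow \cite{KM21} to the $T$-equivariant setting, which has the advantage of handling any localizing invariant $\E$ uniformly (rather than $K$-theory alone). The argument splits into two main steps: first, a pro-excision result at the finite-type level, and second, a colimit argument over the Frobenius to convert pro-cartesian to cartesian.

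At the finite-type level, I would establish that applying $\E^{T_0}(-)$ to the abstract blowup square \eqref{equation: abstract blowup square} yields a pro-cartesian diagram in the sense of Kerz--Strunk--Tamme \cite{KST16}. Their proof is sufficiently formal to carry over to the $T_0$-equivariant setting: one runs the argument on the quotient stack $T_0 \backslash (-)$ in place of a bare scheme. The inputs they require---compatibility of $\Perf$ with the derived pullbacks arising from the blowup, and behaviour of $\Perf$ under finitely presented closed immersions---all hold on quotients by a torus, thanks to the flawlessness of such quotient stacks (Lemma \ref{lemma: perfectness of perfect torus quotients}) and the fact that $T_0$ is of multiplicative type.

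The second step is to pass from pro-cartesian at the finite-type level to genuinely cartesian after perfection, by taking the colimit along Frobenius pullbacks. The proof of Lemma \ref{proposition: K-theory of inverse limit} extends to any localizing invariant, because $\E$ commutes with filtered colimits of small stable $k$-linear categories (Recollection \ref{recollection: localizing invariants}); hence $\E^T(X) = \colim_{\varphi^*} \E^{T_0}(X_0)$ and similarly for the other three corners. The key calculation of \cite{KM21}, which is the heart of the argument, is that the Frobenius pullback agrees with the $p$-th Adams operation (Remark \ref{remark: perfect k-theory and adams operations}) and acts on the obstruction pro-system by a controlled power of $p$; iterating $\varphi^*$ trivialises this pro-system and collapses the pro-cartesian diagram to a genuine pullback square of $\E^T$-spectra. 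The long exact sequence \eqref{equation: long exact proper excision sequence in perfect T-equivariant k-theory} then follows from the homotopy pullback by the standard cofiber sequence.

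The main obstacle is lifting the Kerz--Strunk--Tamme machinery to the equivariant setting. For a torus this is reasonable because $T \backslash X$ is flawless, so perfect complexes generate and all the derived tensor manipulations in \cite{KST16} behave as expected; the subsequent Frobenius colimit argument is then essentially identical to the non-equivariant one. The reason the theorem is restricted to a torus is precisely that flawlessness fails for $\GL_n$ with $n \geq 2$ (as noted just before the statement), and without it the equivariant extension of \cite{KST16} requires genuinely new input.
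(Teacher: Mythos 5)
Your proposal takes a genuinely different route from the paper: the paper works directly at the perfect level, applying Tamme's excision criterion \cite[Thm.\ 18]{Tam17} to the perfected square by verifying (i) flawlessness of $T\backslash(-)$, (ii) that $D_{\Qcoh}(-)$ of the square is a pullback (via $v$-descent from \cite[Thm.\ 11.2]{BS16} along the atlas $X\to T\backslash X$), and (iii) full faithfulness of $Ri_*$ for the perfect closed immersion. This handles all localizing invariants at once and never touches the finite-type level. Your proposed strategy --- pro-excision on a finite-type model, then a Frobenius colimit --- is also a known pattern in the perfect $K$-theory literature, but as written it has several genuine gaps.

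First, the pro-cdh-descent theorem of Kerz--Strunk--Tamme is \emph{not} the statement that the square $\bigl(K^{T_0}(X_0),K^{T_0}(Y_0),K^{T_0}(Z_0),K^{T_0}(E_0)\bigr)$ is pro-cartesian; the result is about the pro-system of \emph{infinitesimal thickenings} $Z_{0,n}, E_{0,n}$ of the closed subscheme, and it is the square with those thickened legs that is pro-cartesian. You never mention infinitesimal thickenings. Second, your claimed mechanism for collapsing the pro-system --- ``Frobenius $=$ $p$-th Adams operation and acts on the obstruction pro-system by a controlled power of $p$'' --- conflates two unrelated things. The identification $\varphi^*=\psi^p$ is what drives Kratzer's $p$-divisibility argument (Lemma \ref{lemma: kratzers argument}); it plays no role in collapsing the infinitesimal pro-system. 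What actually makes the pro-system constant after perfection is that perfect schemes are reduced, so $(Z_{0,n})_\perf\cong Z$ for every $n$ (universal homeomorphisms perfect to isomorphisms, property (vii) in \S2). Third, \cite{KST16} is stated for $K$-theory, not arbitrary localizing invariants, so the claimed uniformity over $\E$ does not come for free from that reference; and ``$\E$ commutes with filtered colimits'' holds only for \emph{finitary} localizing invariants, a hypothesis you never invoke. The paper's route sidesteps all three of these issues because Tamme's excision criterion works uniformly in $\E$ and is applied directly on the perfect side.

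If you want to salvage your route, you would need to: state and prove a $T_0$-equivariant version of pro-cdh-descent with infinitesimal thickenings, replace the Adams-operation step by the observation that perfection kills nilpotents so the thickened pro-system degenerates, and restrict the final clause to finitary localizing invariants (or argue $\colim_{\varphi^*}$-commutation separately). Even then, carrying \cite{KST16} over to equivariant stacks is nontrivial; the paper's direct approach via \cite{Tam17} and $v$-descent of $D_{\Qcoh}$ is considerably cleaner.
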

\begin{proof}
It is enough to verify the following three conditions:
\begin{itemize}
    \item[(i)] $\Ind(\Perf(T\backslash S)) = D_{\Qcoh}(T \backslash S)$ as $\infty$-categories for the spaces $S = X, Y, Z, E$ featuring above,
    \item[(ii)] the induced square of $\infty$-categories given by applying $D_{\Qcoh}(-)$ is a pullback,
    \item[(iii)] the functor $Ri_*: D_{\Qcoh}(T \backslash E) \to D_{\Qcoh}(T\backslash Y)$ is fully faithful.
\end{itemize}

Assuming (i), (ii), (iii), we follow the reasoning in \cite[Proof of Theorem 4.3]{KM21}. To apply \cite[Theorem 18]{Tam17}, we need to check that the square given by $\Perf^T(-)$ is excisive \cite[Definition 14]{Tam17}. By (i), the Ind-completion of this square agrees with the analogous square of $\D_{\Qcoh}(-)$, which satisfies the desired conditions by (ii) and (iii).
We prove (i), (ii), (iii) below, concluding the proof.
\end{proof}

\begin{proof}[Proof of (i)]
This follows from Lemma \ref{lemma: perfectness of perfect torus quotients}.
\end{proof}

\begin{proof}[Proof of (ii)]
We prove that the following square is a pullback of $\infty$-categories:
\begin{equation}\label{equation: fibered square for qcoh of global qution stacks}
    \begin{tikzcd}
        D_{\Qcoh}(T \backslash Y) \arrow[r] & D_{\Qcoh}(T \backslash E) \\
        D_{\Qcoh}(T \backslash X) \arrow[r] \arrow[u] & D_{\Qcoh}(T \backslash Z) \arrow[u]
    \end{tikzcd}
\end{equation}
For perfect schemes, this is proved in \cite[Theorem 11.2.(1)]{BS16}. To deduce the same result for perfect global quotient stacks, note that perfection commutes with the formation of global quotients by Proposition \ref{proposition: perfection of global quotient stacks}. Moreover, the diagram
\begin{equation}\label{diagram: resolution of global quotient stack}
   T\backslash X \leftarrow X \ \substack{\leftarrow\\[-1em] \leftarrow} \ T \times X \ \substack{\leftarrow\\[-1em] \leftarrow \\[-1em] \leftarrow } \ T \times T \times X \ \substack{\leftarrow\\[-1em] \leftarrow \\[-1em] \leftarrow \\[-1em] \leftarrow} \dots
\end{equation}
allows to compute
\begin{equation}\label{equation: qcoh on stacks as a limit}
 D_{\Qcoh}(T \backslash X) = \lim \left( D_{\Qcoh}(X) \ \substack{\rightarrow\\[-1em] \rightarrow} \ D_{\Qcoh}(T \times X) \ \substack{\rightarrow\\[-1em] \rightarrow \\[-1em] \rightarrow} \ D_{\Qcoh}(T \times T \times X) \ \substack{\rightarrow\\[-1em] \rightarrow \\[-1em] \rightarrow \\[-1em] \rightarrow} \ \dots \right)   
\end{equation}
by faithfully flat descent. More precisely, $T \backslash X \leftarrow X$ is a $v$-cover by \cite[Definition 11.1 and Example 2.3]{BS16} so this also follows from \cite[Theorem 11.2.(1)]{BS16} and the definition of $D_{\Qcoh}(T \backslash X)$. Taking iterated self-products of this cover precisely returns \eqref{diagram: resolution of global quotient stack}. 

Each of the terms in the limit \eqref{equation: qcoh on stacks as a limit} is a scheme. Taking $(T\times \dots \times T \times (-))$ of the original abstract blowup is an abstract blowup, so the corresponding square of $\infty$-categories after applying $D_{\Qcoh}(-)$ is a pullback. Commuting limits, we deduce that the limit square \eqref{equation: fibered square for qcoh of global qution stacks} is a pullback as well.
\end{proof}

\begin{proof}[Proof of (iii)]
Since $i: E \to Y$ is a closed immersion of perfect schemes, $Li^* Ri_* = \id$ on $D_{\Qcoh}(E)$ by \cite[Lemma 3.16]{BS16}. The equivariant structure carries through, so the same holds on $D_{\Qcoh}(T \backslash E)$. Hence $Ri_*$ is fully faithful as
$\Hom(Ri_*(\eF), Ri_*(\eG)) = \Hom(Li^*Ri_*(\eF), \eG) = \Hom(\eF, \eG)$.
\end{proof}

\begin{remark}
Note that claims (ii) and (iii) hold without change for any perfectly reductive group $G$ instead of $T$. The technical issue in generalizing Theorem \ref{lemma: homotopy fiber square for K_T} from $T$ to $G$ is the failure of (i). 
To apply \cite[Theorem 18]{Tam17}, we would need to work with the square given by $\Ind(\Perf^G(-))$ directly.    
\end{remark}

In general, we conjecture the following.
\begin{conjecture}
Perfect proper excision holds $G$-equivariantly.    
\end{conjecture}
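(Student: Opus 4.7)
The plan is to adapt the proof of Theorem \ref{lemma: homotopy fiber square for K_T} from the torus $T$ to a general perfectly reductive group $G$. As the Remark preceding the conjecture emphasises, conditions (ii) and (iii) in that argument already pass to arbitrary $G$: the pullback property of $D_{\Qcoh}(G\backslash -)$ on a perfect abstract blowup square and the fully faithfulness of $Ri_*$ are both properties of the underlying perfect stacks. The sole obstruction is condition (i): for $G = \GL_n$ with $n \geq 2$, the classifying stack $BG$ is no longer flawless, so $\Ind(\Perf^G(-))$ fails to agree with $D_{\Qcoh}(G\backslash -)$.

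The strategy is to apply \cite[Theorem 18]{Tam17} directly to the small stable $\infty$-category $\Perf^G(-)$, bypassing the Ind-completion altogether. Two things need to be checked: (a) the square of $\Perf^G$'s attached to the perfect abstract blowup is cartesian; (b) the induced functor $\Ind(\Perf^G(E)) \to \Ind(\Perf^G(Y))$ is fully faithful. Condition (b) is formal from the identity $Li^* Ri_* = \id$ on $\Perf(E)$ of \cite{BS16}: the closed immersion $i$ is $G$-equivariant, so the identity upgrades to $\Perf^G(E)$, and Ind-completion preserves fully faithful functors.

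For condition (a), the plan is to reduce to the non-equivariant case via the bar simplicial resolution. Concretely, one has
\begin{equation*}
    \Perf^G(X) = \lim_{\Delta} \Perf(G^{\bullet} \times X)
\end{equation*}
by $v$-descent for perfect complexes, and likewise for $Y$, $Z$, $E$. On each simplicial level, applying $G^n \times (-)$ to the original abstract blowup yields a further perfect abstract blowup, so the non-equivariant result of \cite{KM21} supplies a cartesian square of $\Perf$'s on that level. Since a limit of cartesian squares of $\infty$-categories is again cartesian, this gives (a) after passing to the totalization.

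The main obstacle is the last paragraph: one must verify that the $v$-descent of \cite[Theorem 11.2]{BS16} for $D_{\Qcoh}$ can be promoted to descent for $\Perf$ in this non-flawless setting, and that the descent equivalences are compatible with the four structure maps of the blowup square. Because $B\GL_n$ is not flawless, arguments at the level of big categories do not automatically restrict to compact objects, so this step requires care. A reasonable fallback, if a direct descent argument runs into trouble, is to use the semi-orthogonal decomposition of Proposition \ref{proposition: fibered square of KH_G-theory spectra} for the flag variety bundle $B\backslash S \to G\backslash S$ to present $K^G(-)$ as a direct summand of $K^B(-)$, and then attempt the excision through $K^B$. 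Unlike in the $KH$-case, however, the passage from $B$ to $T$ relies on $\A^1$-invariance along the unipotent radical $U \subset B$, which fails for $K$-theory of perfect schemes, so the $B$-equivariant case would itself need to be established separately.
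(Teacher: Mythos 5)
Note first that the statement you are attacking is explicitly a \emph{conjecture} in the paper: the paper offers no proof, only recording partial evidence (the non-equivariant case from \cite{KM21}, the $T$-equivariant Theorem \ref{lemma: homotopy fiber square for K_T}, the $KH^G$ case, and the Borel-type version). So there is no hidden paper proof for your attempt to be compared against.

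Your analysis nevertheless mirrors the paper's own remark placed just before the conjecture: you correctly identify that conditions (ii) and (iii) in the proof of Theorem \ref{lemma: homotopy fiber square for K_T} pass from $T$ to a general perfectly reductive $G$, and that the single obstruction is (i) — $B\GL_n$ with $n\geq 2$ is not flawless, so one cannot identify $\Ind(\Perf^G(-))$ with $D_{\Qcoh}(G\backslash -)$ and thereby import the excisivity hypotheses for \cite[Theorem 18]{Tam17}. Your bar-resolution plan is a sensible attempt to implement the paper's own hint ``to work with the square given by $\Ind(\Perf^G(-))$ directly,'' but the gap you flag is real and in fact double. First, even granting $\Perf^G(X) \simeq \lim_{\Delta}\Perf(G^{\bullet}\times X)$, one must check that the non-equivariant excision of \cite{KM21} yields cartesian squares of $\Perf(-)$ (not merely of $D_{\Qcoh}(-)$) at each cosimplicial level, compatibly across the diagram, before concluding cartesianness of the totalization. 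Second and more seriously, Tamme's excisivity hypotheses are verified on the Ind-completed square, and $\Ind(-)$ does not commute with limits of small $\infty$-categories — so cartesianness of the $\Perf^G$ square together with your (b) does not automatically produce the Milnor-square conditions on $\Ind(\Perf^G(-))$; this is precisely the step that flawlessness makes free in the $T$-equivariant case. Your diagnosis of why the $K^B$ fallback is blocked is also correct: the reduction $K^B \simeq K^T$ uses $\A^1$-invariance along the unipotent radical, unavailable for $K$-theory in the perfect setting. In short, your sketch is an accurate diagram of the obstruction, but it leaves the conjecture where the paper does — open.
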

Regarding the validity of this conjecture, we know:
\begin{itemize}
    \item[(a)] In the non-equivariant setup, it holds by \cite[Proof of Theorem 4.3]{KM21}. 
    \item[(b)] For $K^T(-)$, we proved it in Theorem \ref{lemma: homotopy fiber square for K_T}. 
    \item[(c)] For $KH^{T}(-)$, it holds by standard techniques (without perfection) by Proposition \ref{proposition: fibered square of KH_G-theory spectra}.
    \item[(d)] For Borel-type equivariant $K$-theory $K^G_{\lhd}(-)$, it holds by Kan-extension from (a).
\end{itemize}

\subsection{Homotopy \texorpdfstring{$K$}{K}-theory}
Let us record some immediate properties of $KH^G(-)$ for perfect schemes. 
\begin{lemma}
\begin{equation*}
    KH^G_i(X) = \underset{\varphi^*}{\colim} \ KH^{G_0}_i(X_0).
\end{equation*}
\end{lemma}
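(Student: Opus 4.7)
The plan is to mimic the argument of Lemma \ref{proposition: K-theory of inverse limit} for $K$-theory, using that $KH^G$ is built functorially from $K^G$ via the cosimplicial simplex construction. Specifically, I would write $KH^{G_0}(X_0)$ as the geometric realization $|K^{G_0}(X_0 \times \Delta^\bullet_0)|$ of the standard simplicial spectrum, and similarly express $KH^G(X)$ as $|K^G(X \times \Delta^\bullet_{\perf})|$ using the perfection of the algebraic simplex (which is the appropriate object on the perfect side, since $KH^G(-)$ is $\mathbb{A}^1$-invariant and compatible with perfection).

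First I would observe that perfection commutes with fiber products by property \hyperlink{property: limits}{(ii)} of \S\ref{section: perfect schemes}, so $X \times \Delta^n_{\perf} = (X_0 \times \Delta^n_0)_{\perf}$ levelwise in the cosimplicial direction. Applying the $K$-theoretic Lemma \ref{proposition: K-theory of inverse limit} then gives, for each $[n] \in \Delta^{\op}$:
\begin{equation*}
    K^G(X \times \Delta^n_{\perf}) \simeq \colim_{\varphi^*} K^{G_0}(X_0 \times \Delta^n_0).
\end{equation*}
The Frobenius colimit is filtered (indexed by $\mathbb{N}$), hence commutes with the geometric realization in the simplicial direction, yielding
\begin{equation*}
    KH^G(X) \simeq |K^G(X \times \Delta^\bullet_{\perf})| \simeq \colim_{\varphi^*} |K^{G_0}(X_0 \times \Delta^\bullet_0)| \simeq \colim_{\varphi^*} KH^{G_0}(X_0).
\end{equation*}
Passing to $\pi_i$ and using that homotopy groups of spectra commute with filtered colimits then gives the claimed identification.

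The only real subtlety is verifying that $KH^G$ of the perfect scheme $X$ has this presentation in terms of the perfected simplex $\Delta^n_{\perf}$ rather than the classical one. This follows from $\mathbb{A}^1$-homotopy invariance of $KH^G(-)$ (cf. \cite[\S 4.2--\S 4.3]{Kha20}) together with the fact that $\Delta^n_{\perf}$ is, just like $\Delta^n_0$, an iterated perfect affine space bundle over $\pt$; alternatively one can import the definition of $KH^G$ on $\Sch_k^{\pfp}$ directly by Kan extension from $K^G$, in which case the presentation is tautological. Everything else is formal manipulation of filtered colimits.
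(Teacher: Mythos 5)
Your strategy — commute the $\N$-indexed filtered colimit along $\varphi^*$ past the simplicial realization defining $KH$ and apply Lemma~\ref{proposition: K-theory of inverse limit} levelwise — is exactly what the paper's one-line proof ("formally follows \dots by commuting colimits") has in mind, so the approach is the same. The place where you create extra work is the rewriting $KH^G(X) \simeq |K^G(X \times \Delta^\bullet_{\perf})|$. This is true but not the definition, and the justification you give (that $\Delta^n_{\perf}$ is an iterated affine-space bundle over $\pt$ plus $\A^1$-invariance of $KH^G$) only shows $KH^G(Y\times\Delta^n_{\perf}) \simeq KH^G(Y)$ for each fixed $n$; it does not directly identify the realization of the \emph{$K$}-simplicial spectrum on the perfected simplex with $KH^G(X)$. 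Worse, once you invoke Lemma~\ref{proposition: K-theory of inverse limit} at level $n$, the resulting colimit runs over the absolute Frobenius of the \emph{product} $X_0\times\Delta^n$, whereas the colimit in the statement runs over the Frobenius of $G_0\backslash X_0$ alone. These colimits do agree, because the extra factor — the realization of $(\id_{X_0}\times\varphi_{\Delta^\bullet})^*$ on $|K^{G_0}(X_0\times\Delta^\bullet)| = KH^{G_0}(X_0)$ — is the identity (the two maps $\varphi_{\Delta^n}$ and $0$ on $\Delta^n$ become equal in $KH$ since $\Delta^n$ is contractible), but this is a step you silently elide.

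The shortest complete version avoids perfecting the simplex: write $\colim_{\varphi^*} KH^{G_0}(X_0) = \colim_{\varphi^*}|K^{G_0}(X_0\times\Delta^\bullet)| = |\colim_{\varphi^*} K^{G_0}(X_0\times\Delta^\bullet)|$, observe that at each level the Frobenius here is $\varphi_{G_0\backslash X_0}\times\id_{\Delta^n}$, that the inverse limit of $(G_0\backslash X_0)\times\Delta^n$ along these affine transition maps is $(G\backslash X)\times\Delta^n$, and apply the same $K$-theory continuity that underlies Lemma~\ref{proposition: K-theory of inverse limit} to conclude the inner colimit is $K^G(X\times\Delta^n)$; realizing gives $KH^G(X)$. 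Even cleaner is the purely categorical route: $\Perf^G(X) = \colim_{\varphi^*}\Perf^{G_0}(X_0)$ as a filtered colimit in $\Cat^{\ex}_k$ and $KH(-)$, like $K(-)$, preserves filtered colimits of categories. Finally, the remark that one could "define $KH^G$ on $\Sch_k^{\pfp}$ by Kan extension from $K^G$" is off: $KH^G$ is not a Kan extension of $K^G$, and Khan's definition already evaluates on perfect schemes without modification, so there is nothing to import.
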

\begin{proof}
    Formally follows from Lemma \ref{proposition: K-theory of inverse limit} and the definition of $KH(-)$ by commuting colimits.
\end{proof}

\begin{lemma}\label{lemma: homotopy fiber square for KH}
$KH^T(-)$ satisfies perfect proper excision.    
\end{lemma}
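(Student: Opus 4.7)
The plan is to derive this as a formal consequence of the classical (non-perfect) equivariant proper excision from Proposition \ref{proposition: fibered square of KH_G-theory spectra} together with the fact that $KH^G$ of a perfect scheme is computed as a filtered colimit of $KH^{G_0}$ along Frobenius pullbacks.

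By Setup \ref{setup: perfect abstract blowup squares}, any $G$-equivariant perfect abstract blowup square \eqref{equation: perfect abstract blowup square} is the perfection of a $G_0$-equivariant abstract blowup square \eqref{equation: abstract blowup square} in $\Sch_k^{\fp}$. First I would apply $KH^{G_0}(-)$ to this classical square. By Proposition \ref{proposition: fibered square of KH_G-theory spectra}, applicable in both cases $G_0 = \GL_n$ and $G_0 = T_0$, the resulting square of $KH^{G_0}$-spectra is a homotopy fiber square.

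Next I would take the colimit along the Frobenius pullback $\varphi^*$. Since $\varphi$ is defined functorially on $G_0$-equivariant schemes, it acts compatibly on every corner of the classical abstract blowup square and commutes with all the structural maps. We therefore obtain a filtered $\N$-indexed diagram of homotopy fiber squares of spectra. Filtered colimits of spectra commute with finite homotopy limits, and in particular preserve homotopy pullback squares. Hence the colimit square is again a homotopy fiber square. Finally, the preceding lemma identifies each corner of the colimit with $KH^G$ of the corresponding perfection; strictly speaking that lemma is stated on homotopy groups, but the same argument lifts to the level of spectra because $KH$ commutes with filtered colimits of qcqs schemes along affine maps (inherited from the analogous property of $K$-theory used in Lemma \ref{proposition: K-theory of inverse limit}, applied degreewise in the $\Delta^{\bullet}$-construction of $KH$, and then commuting the colimits). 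Putting these two steps together gives the desired homotopy fiber square
\begin{equation*}
    \begin{tikzcd}
        KH^G(Y) \arrow[r] & KH^G(E) \\
        KH^G(X) \arrow[u] \arrow[r] & KH^G(Z) \arrow[u]
    \end{tikzcd}
\end{equation*}
for every perfect abstract blowup square, which is precisely perfect proper excision for $KH^G(-)$.

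I do not anticipate a serious obstacle here: the argument is entirely formal once Proposition \ref{proposition: fibered square of KH_G-theory spectra} and the colimit presentation are in hand. The only minor care needed is the upgrade of the preceding lemma from homotopy groups to the level of spectra, which is immediate from the fact that $KH^{G_0}(-)$ commutes with the relevant filtered colimits along affine morphisms and that homotopy groups commute with filtered colimits. This is also consistent with item (c) in the discussion after the conjecture, where the case of $KH^G$ was asserted to follow from standard techniques without perfection; our argument simply imports those techniques through the Frobenius colimit.
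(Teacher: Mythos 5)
Your proposal is correct and is exactly the unpacked version of the paper's one-line proof, which reads ``Take the perfection of Proposition \ref{proposition: fibered square of KH_G-theory spectra}.'' You have correctly identified all the pieces that make ``take perfection'' rigorous: apply the non-perfect proposition to a finite-type model, pass to the Frobenius colimit, use that filtered colimits of spectra commute with finite limits (hence preserve homotopy pullback squares), and identify the colimit with $KH^G$ of the perfection via the preceding lemma upgraded to the spectrum level.
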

\begin{proof}
Take the perfection of Proposition \ref{proposition: fibered square of KH_G-theory spectra}.   
\end{proof}

It is reasonable to expect the following.
\begin{conjecture}\label{conjecture: perfect K is perfect KH}
The canonical map $K^G(X) \to KH^G(X)$ is an equivalence.   
\end{conjecture}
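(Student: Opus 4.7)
The strategy is an induction on dimension: reduce to the smooth case via perfect proper excision, where $K^G$ and $KH^G$ agree by $\A^1$-invariance.

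First I would handle the smooth case. If $X$ is the perfection of a smooth finitely presented $G_0$-scheme $X_0$, then $G_0 \backslash X_0$ is a smooth Artin stack over $k$, so the canonical map $K^{G_0}(X_0) \to KH^{G_0}(X_0)$ is an equivalence by $\A^1$-invariance of $K$-theory on regular stacks. Combined with Lemma \ref{proposition: K-theory of inverse limit} and its $KH$-analogue, which compute $K^G(X)$ and $KH^G(X)$ as colimits along $\varphi^*$, and the fact that taking fibers commutes with filtered colimits of spectra, this yields $K^G(X) \xrightarrow{\simeq} KH^G(X)$ in the smooth case.

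For the inductive step, I would use a $G_0$-equivariant resolution (or alteration) of the model $X_0$ over $k$, e.g.\ via Temkin's equivariant resolution of singularities in positive characteristic, producing a $G_0$-equivariant abstract blowup square
\begin{equation*}
\begin{tikzcd}
Y_0 \arrow[d, "f_0"] \arrow[r, hookleftarrow] & E_0 \arrow[d] \\
X_0 \arrow[r, hookleftarrow] & Z_0
\end{tikzcd}
\end{equation*}
with $Y_0$ smooth and $\dim Z_0, \dim E_0 < \dim X_0$. Perfecting and applying the natural transformation $K^G(-) \to KH^G(-)$ produces a morphism between the perfect proper excision fiber squares supplied by Theorem \ref{lemma: homotopy fiber square for K_T} and Lemma \ref{lemma: homotopy fiber square for KH}. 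By the smooth case applied to $Y$ and the inductive hypothesis applied to $E$ and $Z$, the comparison map is an equivalence at three of the four corners, hence at the fourth. The base case $\dim X = 0$ is automatic since a reduced $0$-dimensional perfect scheme is (perfectly) smooth.

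The main obstacles are twofold. First, Theorem \ref{lemma: homotopy fiber square for K_T} only establishes perfect proper excision for $K^T$, not $K^{\GL_n}$; for the $\GL_n$-case one would either need to prove the open conjecture on $G$-equivariant perfect proper excision or bootstrap from the $T$-equivariant case by exploiting the factorization $K^{\GL_n}(-) \to K^T(-)$ through the perfection of $B \backslash \GL_n$, as in the proof of Proposition \ref{proposition: fibered square of KH_G-theory spectra}. Second, $G_0$-equivariant resolution of singularities in characteristic $p$ is subtle; for the Schubert-variety applications of primary interest in this paper this difficulty is bypassed entirely by using the explicit Bott--Samelson--Demazure resolutions, so the full conjecture in that restricted setting should be accessible with the methods developed here.
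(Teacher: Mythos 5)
This statement is labeled as a \emph{conjecture} in the paper: the authors do not claim a proof, and immediately after it they only record the conditional observation that the comparison holds whenever one has a chain of $T$-equivariant perfect abstract blowups reducing to perfectly smooth schemes (they then deduce this for affine Schubert varieties and toric varieties where explicit resolutions are available). Your overall strategy coincides with theirs, and you are right to flag the two obstacles; but both obstacles are more severe than your phrasing suggests, so the ``proof'' remains conditional and does not establish the conjecture.

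First, there is no ``Temkin's equivariant resolution of singularities in positive characteristic.'' Resolution of singularities over $\F_p$ is open beyond low dimension; what exists are de Jong--Temkin \emph{alterations}, which are generically finite rather than generically isomorphisms, hence do not produce abstract blowup squares and are not covered by the perfect proper excision of Theorem~\ref{lemma: homotopy fiber square for K_T}. So your inductive step has no base to stand on in general. Second, the proposed bootstrap from $T$ to $\GL_n$ via $B\backslash\GL_n$ is circular for $K$-theory: the proof of Proposition~\ref{proposition: fibered square of KH_G-theory spectra} uses the identification $KH(B\backslash S)=KH(T\backslash S)$, which rests on $\A^1$-invariance of $KH$. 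For $K$-theory one cannot replace $K(B\backslash S)$ by $K(T\backslash S)$ without already knowing $\A^1$-invariance (equivalently, the conjecture itself), and the stack $B\backslash S$ need not be flawless in characteristic $p$, so one also cannot simply rerun the argument of Theorem~\ref{lemma: homotopy fiber square for K_T} for the Borel. Your final sentence — that the Schubert case is accessible via Demazure resolutions — is correct and is exactly what the paper proves in Theorem~\ref{observation: K = KH for perfect affine grassmannian}, but that is a special case, not the conjecture.
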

Non-equivariantly, the map $K(X) \to KH(X)$ is actually an equivalence for any pfp perfect scheme $X$ by \cite[Proposition 5.1]{AMM22}. However, their argument needs proper excision as an input.

\begin{remark}
If $X$ is $T$-equivariant and perfectly smooth, then $K^T(X) \to KH^T(X)$ is an equivalence.   
By the above theorems, both sides satisfy descent on equivariant perfect abstract blowup squares. We conclude that $K^T(X) \to KH^T(X)$ is an equivalence whenever there exists a sequence of equivariant abstract blowups reducing everything to perfectly smooth schemes. 
We will see this happen in situations where explicit resolutions are known -- see Theorem \ref{observation: K = KH for perfect affine grassmannian} and Theorem \ref{observation: K = KH for toric varieties}.
\end{remark}
\section{Perfect fixed-point schemes and trace maps}\label{section: perfect trace maps}
We now record how the fixed-point schemes and trace map from \S \ref{section: fixed-point schemes and traces} play out in the perfect setup of \S \ref{section: perfect schemes}, \S \ref{section: perfect K-theory}. We discuss how to control whether the trace map is an isomorphism; we also provide some direct geometric intuition.

\subsection{Perfect fixed-point schemes}
The fixed-point schemes and trace maps from \S \ref{section: fixed-point schemes and traces} specialize to the perfect setup. 
\paragraph{Immediate observations.}
In Setup \ref{setup: perfect geometry}, we can apply the fixed-point scheme functor $\Fix_{\frac{G}{G}}(-)$ and its variants.
Since perfect schemes are reduced \hyperlink{property: reduced}{(iii)} and perfect derived schemes are classical \hyperlink{property: classical}{(iv)}, the natural maps \eqref{equation: reduced, classical and derived fixed-point schemes} give isomorphisms
\begin{equation*}
    \Fix^{\mathbf{L}}_{\frac{G}{G}}(X) \cong \Fix_{\frac{G}{G}}(X) \cong \Fix^{\red}_{\frac{G}{G}}(X).
\end{equation*}
Since $(-)_{\perf}$ commutes with fiber products \hyperlink{property: limits}{(ii)}, it formally follows that
\begin{equation*}
    \Fix_{G}(X) \cong \left( \Fix_{G_0}(X_0) \right)_{\perf}.
\end{equation*}
Compatibility with stack quotients from Proposition \ref{proposition: perfection of global quotient stacks} further shows
\begin{equation*}
    \Fix_{\frac{G}{G}}(X) \cong \Fix_{\frac{G_0}{G_0}}(X_0)_{\perf}.
\end{equation*}
and similarly for any restriction $\Fix_S(X)$.
Passing through the limit along Frobenius, we in particular note that our trace map
\begin{equation*}
    \tr_{X}: K^G_0(X)_k \to H_0(\Fix_{\frac{G}{G}}(X), \O)
\end{equation*}
is given by the perfection of 
\begin{equation*}
    \tr_{X_0}: K^{G_0}_0(X_0)_k \to H_0(\Fix_{\frac{G_0}{G_0}}(X_0), \O).
\end{equation*}
In particular, if $\tr_{X_0}$ is an isomorphism, the same is true for $\tr_{X}$.

\paragraph{Perfect tori.}
Let $T$ be a perfect torus; then the situation simplifies further.
Since global quotients by $T$ are flawless by Lemma \ref{lemma: perfectness of perfect torus quotients}, we have
\begin{equation}\label{equation: HH^T and fixed point schemes for perfect schemes}
    HH^T_i(X, k) = \RG_i(\Fix_{\frac{T}{T}}(X), \O). 
\end{equation}
In particular,
\begin{equation}\label{equation: perfect HH^T vanishes in positive degrees}
        HH^T_i(X, k) = \RG_i(\Fix_{\frac{T}{T}}(X), \O) = 0, \qquad i \geq 1.
\end{equation}
Since $(-)^T$ is exact on functions, we can commute it with taking cohomology to get
\begin{equation}\label{equation: torus invariant functions}
\RG_i(\Fix_{\frac{T}{T}}(X), \O) = H_i(\Fix_T(X), \O)^T, \qquad i \in \Z.    
\end{equation}
In particular, this can be nonzero only in those degrees where coherent cohomology of $\Fix_T(X)$ is nonzero. Such vanishing can be further checked fiberwise over $T$.
\begin{lemma}\label{lemma: perfect base change on fixed point schemes}
Let $i \in \Z$. Then $\RG_i(\Fix_{T}(X), \O) = 0$ if and only if $\RG_i(\Fix_t(X), \O) = 0$ for all geometric points $t$ of $T$.
\end{lemma}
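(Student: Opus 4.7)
The plan is to combine derived base change, which is available in the perfect setting, with a Nakayama-type argument after reducing to a noetherian finite-type model. Since $T$ is affine, I identify $\RG(\Fix_T(X), \O)$ with $R\pi_*\O$ as a complex of $\O(T)$-modules, where $\pi: \Fix_T(X) \to T$ is the structure map. For any geometric point $t: \Spec \overline{k} \to T$, property \hyperlink{property: classical}{(vi)} of the perfect setup (all perfect derived schemes are classical) tells us that the classical fiber $\Fix_t(X)$ agrees with its derived enhancement $\Fix_T(X) \times^{\lL}_T \Spec \overline{k}$. Hence derived base change applies and yields a quasi-isomorphism
$$\RG(\Fix_t(X), \O) \simeq \RG(\Fix_T(X), \O) \otimes^{\lL}_{\O(T)} \overline{k}.$$

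Next, I reduce to a finite-type model. Pick $X_0 \in \Sch_k^{\fp, T_0}$ with perfection $X$, and use property \hyperlink{property: sections}{(v)} to express the perfect cohomology groups as filtered colimits along Frobenius pullback:
$$\RG_i(\Fix_T(X), \O) = \colim_{\varphi^*} \RG_i(\Fix_{T_0}(X_0), \O).$$
Since $\varphi: T_0 \to T_0$ is a universal homeomorphism, geometric points of $T$ correspond bijectively to geometric points of $T_0$, and the analogous colimit description holds fiberwise after picking compatible lifts. It therefore suffices to prove the fiberwise criterion at the noetherian finite-type level and pass through the colimit; exactness of filtered colimits preserves vanishing in each direction.

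On the noetherian base $T_0$, the cohomology sheaves $R^{-i}\pi_{0,*}\O$ are quasi-coherent, and the base-change spectral sequence relates the fiber cohomology in degree $-i$ to the $(-i)$-th cohomology of the pushforward together with $\Tor$ corrections coming from strictly higher cohomology sheaves. Because of these $\Tor$ corrections, a naive single-degree fiberwise check does not immediately recover the total vanishing in that degree; this is the main technical obstacle. I would address it by a downward induction from the top nonvanishing cohomological degree of $R\pi_{0,*}\O$, where base change is automatically an isomorphism and Nakayama's lemma applies directly to the (finitely generated) top cohomology sheaf: knowing vanishing in all degrees strictly above $-i$ forces the spectral sequence to degenerate on the diagonal through $(0,-i)$, so that fiberwise vanishing in degree $-i$ is equivalent, via Nakayama applied to the (coherent or almost finitely generated) cohomology sheaf, to vanishing of $R^{-i}\pi_{0,*}\O$ itself. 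Reassembling the inductive conclusion through the Frobenius colimit returns the statement for $X$ over $T$.
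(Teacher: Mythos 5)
Your proposal contains a genuine gap, and the gap is exactly where the perfectness is essential.

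The paper's own proof is a one-liner: it cites perfect base change from \cite[Lemma~3.18]{BS16}, which in the perfect setting provides the \emph{degree-by-degree, underived} identification
\[
H_i(\Fix_T(X), \O) \otimes_{\O(T)} \overline{k}(t) \;\cong\; H_i(\Fix_t(X), \O),
\]
i.e.\ there are no higher $\Tor$ corrections. The ``only if'' direction is then immediate, and the ``if'' direction is Nakayama for the finitely generated $\O(T)$-module $H_i(\Fix_T(X), \O)$.

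Your first step --- derived base change combined with classicality of perfect derived schemes, yielding $\RG(\Fix_t(X), \O) \simeq \RG(\Fix_T(X), \O) \otimes^{\lL}_{\O(T)} \overline{k}$ --- is correct but strictly weaker than what the lemma needs, because the homological degree-$i$ piece of a derived tensor picks up $\Tor$ contributions from adjacent degrees. You correctly identify this as the obstacle, but the fix you propose cannot work. Reducing to a noetherian finite-type model $X_0$ over $T_0$ and proving the single-degree fiberwise criterion there is doomed: \emph{the criterion is false at that level}. Cohomology and base change do not commute in a fixed degree for proper morphisms of noetherian schemes; the standard counterexample is $\pi\colon E \times \Pic^0(E) \to \Pic^0(E)$ with $\eF$ the Poincar\'e line bundle on an elliptic curve $E$, where $R^0\pi_*\eF = 0$ yet $H^0(E, \eF_0) = k \neq 0$ over the origin, so already the forward implication fails. (Nothing in your argument is specific to the structure sheaf, so this generic failure is fatal.) The content of the lemma is precisely that perfect $\F_p$-algebras kill the relevant $\Tor$ groups, making derived and underived base change coincide --- the content of \cite[Lemma~3.18]{BS16}, which you neither cite nor reprove --- and this is not something that can be manufactured by passing to finite type and back through the Frobenius colimit.

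Secondarily, even granting the reduction to $X_0/T_0$, your downward induction from the top nonvanishing cohomological degree would establish a \emph{range} statement (vanishing in all degrees $\geq j$ for total and all fibers are equivalent), because the spectral sequence degeneration you invoke requires knowing vanishing in all degrees strictly above the current one. That hypothesis is not part of the lemma, which is a genuinely single-degree ``iff''. The sketch also stops short of carrying out the Nakayama step; as written it is an intention rather than an argument.
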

\begin{proof}
Considering the structure map $\Fix_T(X) \to T$ with $T$ affine, the result holds by perfect base-change \cite[Lemma 3.18]{BS16}.    
\end{proof}

\subsection{Trace map for equivariant points}
Given a split reductive group $G_0$ over $k$, we discuss perfect equivariant $K$-theory of the point. This closely relates to the representation theory of perfect reductive groups studied in \cite{CW22}. In classical terms, the Frobenius map corresponds to the multiplication by $p$ on characters and consequently to Frobenius twisting on $\Rep_k(G_0)$.
\begin{lemma}\label{lemma: trace for point}\label{lemma: perfect equivariant point}
Let $X_0 = \pt$ and $T_0 = \Gm^n$. Then 
\begin{equation*}
    \tr: K^T(\pt, k) \to \RG(\Fix_{\frac{T}{T}}(\pt), \O)
\end{equation*}
is an isomorphism of spectra. Both sides are supported in homotopical degree $0$ with value
\begin{equation*}
    k[t_1^{\pm \tfrac{1}{p^{\infty}}}, \dots, t_n^{\pm \tfrac{1}{p^{\infty}}}].
\end{equation*}
\end{lemma}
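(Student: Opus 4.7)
The plan is to compute both sides of the trace map as spectra and then match them in degree zero via the classical character isomorphism after perfection.

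First, I would unwind the right-hand side. Since $T$ is abelian, the adjoint action on $\Fix_T(\pt) = T$ is trivial, so \eqref{equation: torus invariant functions} gives $\RG_i(\Fix_{\frac{T}{T}}(\pt), \O) = H_i(T, \O)^T = H_i(T, \O)$. As $T$ is an affine perfect scheme, this vanishes for $i \neq 0$ and equals the global functions $\O(T) = k[t_1^{\pm 1/p^{\infty}}, \dots, t_n^{\pm 1/p^{\infty}}]$ in degree zero.

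Next, I would compute $K^T(\pt, k)$ through its homotopy groups, invoking the universal coefficient theorem. In degree zero, Lemma~\ref{proposition: k-theory commutes with perfections} identifies $K^T_0(\pt)_k$ with the perfection of the $k$-linearized representation ring $R(T_0)_k$, producing exactly the expected perfect Laurent polynomial ring (and the correction term $K^T_{-1}(\pt) \otimes^1_{\Z} k$ will vanish by the negative-degree argument below). For $i \geq 2$, Corollary~\ref{corollary: kratzers vanishing with k-coefficients} gives $K^T_i(\pt, k) = 0$ directly; for $i = 1$, the same corollary reduces the task to showing $K^T_0(\pt) \otimes^1_{\Z} k = 0$, which holds because $K^T_0(\pt) = \Z[t_1^{\pm 1/p^{\infty}}, \dots, t_n^{\pm 1/p^{\infty}}]$ is a filtered colimit of free $\Z$-modules along Frobenius pullback, hence flat. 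For $i \leq -1$, I would use the vanishing $K^{T_0}_{<0}(\pt) = 0$ coming from regularity of the classifying stack $B T_0$ and commute it with the Frobenius colimit via Proposition~\ref{proposition: K-theory of inverse limit}.

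Finally, I would match the trace map itself. By the ``Immediate observations'' paragraph in \S\ref{section: perfect trace maps}, our trace map $\tr_X$ is the perfection of the classical trace $\tr_{X_0}$, and in degree zero the latter is the character isomorphism recorded in Example~\ref{example: T-equivariant point}. Since taking perfections preserves isomorphisms, the degree zero statement of the lemma follows; in all other degrees both sides vanish and the claim is trivial. I do not expect a serious obstacle here: the most delicate point is the bookkeeping for $K^T_{\leq -1}(\pt, k)$, where one must be explicit about regularity of $B T_0$ and the commutation of negative K-theory with cofiltered affine limits.
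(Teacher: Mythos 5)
Your proof is correct, but it takes a genuinely different route from the paper's. The paper computes $K^T(\pt)$ in one stroke: it observes that $\Rep_T$ is equivalent to the category of $\Z[\tfrac{1}{p}]^n$-graded vector spaces, uses additivity of localizing invariants to split $K^T(\pt)$ into a direct sum of copies of $K(\pt)$ indexed by $\Z[\tfrac{1}{p}]^n$, and then invokes Quillen's computation of $K(\F_p)$ so that with $k$-coefficients everything collapses to degree $0$ at once; the right-hand side is handled identically to yours ($\RG(T,\O)^T$, Serre vanishing). Your argument instead assembles the left-hand side degree by degree from the structural lemmas of \S\ref{section: perfect K-theory}: Lemma~\ref{proposition: k-theory commutes with perfections} in degree $0$, Corollary~\ref{corollary: kratzers vanishing with k-coefficients} (Kratzer) plus flatness/UCT in degrees $1$ and $\geq 2$, and regularity of $\Rep(T_0)$ plus commutation with the Frobenius colimit in negative degrees; you then recover the trace identification by perfecting Example~\ref{example: T-equivariant point}. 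Both arguments are valid with no circularity (all the results you cite precede this lemma). The trade-off is that the paper's additivity + Quillen argument is shorter and handles all degrees simultaneously, whereas your modular approach avoids invoking Quillen's theorem and showcases how the general perfect-$K$-theory machinery specializes; it is also a bit more transparent about exactly where the non-negative-degree support and the absence of $p$-torsion come from. One small stylistic remark: you compute $K^T_0(\pt)_k$ via Lemma~\ref{proposition: k-theory commutes with perfections} and separately argue that $K^T_{-1}(\pt)$ vanishes to dispose of the UCT correction term; it would be slightly cleaner to prove the negative-degree vanishing first and note that it feeds both the degree-$0$ identification $K^T_0(\pt,k)\cong K^T_0(\pt)_k$ and the vanishing in degrees $\leq -1$.
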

\begin{proof}
Since $\Rep_T$ is equivalent to the category of $\Z[\tfrac{1}{p}]^n$-graded vector spaces, we can compute $K^T(\pt)$ from $K(\pt)$ by additivity. But the latter was computed by Quillen \cite[Corollary IV.1.13]{Weib13}. In particular, the $k$-linear version $K^T(\pt, k)$ is supported in degree $0$ with value given by the representation ring of $T$.

On the other hand, $\Fix_{\frac{T}{T}}(\pt) = \frac{T}{T}$, so the right-hand side is $\RG(T, \O)^T$. Since $T$ is affine, this is concentrated in degree $0$ by Serre's vanishing theorem. There it is simply given by $H_0(T, \O)$ which agrees with the above representation ring via the trace map. 
\end{proof}

\begin{lemma}\label{lemma: perfect G-equivariant point}
Let $X=\pt$ with the trivial $G$-action. Then the trace map gives an isomorphism
\begin{equation*}
  \tr_X:  K^G_0(\pt, k) \xrightarrow{\cong} H_0(\Fix_{\frac{G}{G}}(\pt), \O) \xrightarrow{\cong} H_0(\s, \O).
\end{equation*}
Both sides are given by
\begin{equation*}
k[t_1^{\pm \tfrac{1}{p^{\infty}}}, \dots, \ t_n^{\pm \tfrac{1}{p^{\infty}}}]^{W}.
\end{equation*}
\end{lemma}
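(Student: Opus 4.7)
The plan is to obtain this statement as the perfection of Example \ref{example: trace map in k-theory for point}. By the compatibility observations preceding this lemma, the perfect trace map $\tr_X$ arises as the perfection of the classical trace $\tr_{X_0}: K^{G_0}_0(\pt, k) \to H_0(G_0/G_0, \O)$: indeed, Lemma \ref{proposition: k-theory commutes with perfections} identifies the source with $(K^{G_0}_0(\pt)_k)_{\perf}$, while the compatibility of $\Fix$ with perfection together with \hyperlink{property: sections}{(v)} gives $H_0(\Fix_{G/G}(\pt), \O) = \colim_{\varphi^*} H_0(G_0/G_0, \O)$.

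Next, Example \ref{example: trace map in k-theory for point} identifies $\tr_{X_0}$ with an isomorphism whose source and target are both $k[t_1^{\pm 1}, \dots, t_n^{\pm 1}]^W$ (the classical Chevalley identification, valid in any characteristic). Under Frobenius pullback $\varphi^*$, each line bundle character is raised to its $p$-th power by Claim \ref{claim: frobenius and p-the powers}, so $t_i \mapsto t_i^p$; on the functions side, the Frobenius on $G_0$ is the usual $p$-power map, which on conjugation-invariant functions also acts by $t_i \mapsto t_i^p$. Passing to the $\varphi^*$-colimit on both sides of the classical isomorphism yields
\begin{equation*}
\tr_X: k[t_1^{\pm \tfrac{1}{p^{\infty}}}, \dots, t_n^{\pm \tfrac{1}{p^{\infty}}}]^W \xrightarrow{\cong} k[t_1^{\pm \tfrac{1}{p^{\infty}}}, \dots, t_n^{\pm \tfrac{1}{p^{\infty}}}]^W,
\end{equation*}
which gives the first claimed isomorphism together with the explicit presentation.

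For the second isomorphism, recall that the Steinberg section $\s_0 \hookrightarrow G_0$ meets every regular conjugacy class in a single point and $G_0 \setminus G_0^{\reg}$ has codimension $\geq 2$, so classically $H_0(G_0/G_0, \O) \xrightarrow{\cong} H_0(\s_0, \O)$; perfecting both sides (once more via \hyperlink{property: sections}{(v)}) produces the desired $H_0(G/G, \O) \xrightarrow{\cong} H_0(\s, \O)$. There is no genuine obstacle in the argument -- everything amounts to bookkeeping about how perfection interacts with the classical Chevalley isomorphism, and the only point requiring care is that Frobenius acts compatibly on both sides, which ultimately boils down to the $p$-th power map on characters.
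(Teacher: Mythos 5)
Your proposal is correct and follows essentially the same route as the paper: reduce to the classical Example \ref{example: trace map in k-theory for point} and then observe that perfection preserves the isomorphism. The paper's proof is terser — it simply invokes the "immediate observations" earlier in the section, which already establish that $\tr_X$ is the perfection of $\tr_{X_0}$ and that perfection preserves isomorphisms — whereas you re-derive those compatibilities explicitly (via Lemma \ref{proposition: k-theory commutes with perfections}, Claim \ref{claim: frobenius and p-the powers}, and property (v)) and spell out the identification of the Frobenius action on both sides as $t_i \mapsto t_i^p$. That extra bookkeeping is sound (over $k=\F_p$ the absolute Frobenius on functions is precisely the substitution $t_i \mapsto t_i^p$, and $(-)^W$ commutes with the filtered Frobenius colimit), and it makes the explicit presentation $k[t_1^{\pm 1/p^\infty},\dots,t_n^{\pm 1/p^\infty}]^W$ transparent; your separate treatment of the isomorphism onto $H_0(\s,\O)$ via the codimension-$\geq 2$ and Steinberg section argument is also consistent with what Example \ref{example: trace map in k-theory for point} implicitly uses.
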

\begin{proof}
This is already the case before perfection for $\s_0$ and $G_0$ by Example \ref{example: trace map in k-theory for point}.
Hence $\tr$ is also isomorphism after perfecting.
\end{proof}
\noindent For example if $G_0 = \GL_n$, the above ring is explicitly given by $ k[c_1^{\tfrac{1}{p^{\infty}}}, \dots, \ c_{n-1}^{\tfrac{1}{p^{\infty}}}, c_n^{^{\pm \tfrac{1}{p^{\infty}}}}]$.

\begin{remark}
Note that $K^G_0(\pt, k)$ is supported in degree zero. On the other hand, whenever the adjoint representation of $G$ on its ring of functions has higher group cohomology, $\RG(\Fix_{\frac{G}{G}}(\pt), \O)$ will be nontrivial also in negative homological degrees. On the other hand, $H_0(\Fix_{\s}(\pt), \O)$ is supported in degree $0$ since $\s$ is an affine scheme. 
\end{remark}

\subsection{Perfect projective bundles and traces}
We now record how $T$-equivariant $K$-theory and fixed-point schemes behave under passage to perfect projective bundles. The easiest argument works for any localizing invariant, but we also sketch simple geometric arguments for fixed-point schemes valid in degree zero in \S \ref{section: global functions on perfect fixed-point schemes}.

\begin{lemma}\label{lemma: stability on perfect projective bundles}
Let $T$ act on $X$ and $\eE \in \VB^T(X)$ of rank $r$. Let $Y_0 = \P(\eE)$ be the corresponding projective space over $X$ and $Y$ its perfection. Let $\tr: K^T(-, k) \to HH^T(-, k)$ be the trace map. Then the following are equivalent:
\begin{itemize}
    \item[(i)] $\tr$ is isomorphism for $X$,
    \item[(ii)] $\tr$ is isomorphism for $Y$.
\end{itemize}
\end{lemma}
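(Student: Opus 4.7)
The plan is to combine the classical projective bundle formula with the behaviour of localizing invariants under perfection, handling the two implications by different mechanisms. Throughout, the trace is used as a natural transformation between the localizing invariants $K(-,k)$ and $HH(-,k)$.

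For $(i) \Rightarrow (ii)$: First apply Proposition \ref{proposition: projective bundle formula for localizing invariants} directly to the perfect base $X$ and $\eE$ (permissible since perfect schemes are qcqs). This yields, for the \emph{non-perfect} bundle $\P(\eE)$ and any localizing invariant $\E$, a decomposition $\E^T(\P(\eE)) \simeq \bigoplus_{i=0}^{r-1} \E^T(X)$ compatible with $\tr$; as in Corollary \ref{corollary: classical trace and projective bundles}, it follows that $\tr_X$ is an iso iff $\tr_{\P(\eE)}$ is. Now pass to the perfection. Since $\Perf^T(Y) = \colim_{\varphi^*} \Perf^T(\P(\eE))$ in $\Cat^{\ex}_k$ (by the standard compatibility of $\Perf$ with limits along affine maps), any localizing invariant $\E$ satisfies $\E^T(Y) = \colim_{\varphi^*} \E^T(\P(\eE))$ -- this is the content of Lemma \ref{proposition: K-theory of inverse limit} for $K$, and applies verbatim to $HH$. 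By naturality and Frobenius-equivariance of $\tr$ one then has $\tr_Y = \colim \tr_{\P(\eE)}$, so $\tr_{\P(\eE)}$ iso implies $\tr_Y$ iso.

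For $(ii) \Rightarrow (i)$: Use rationality. Fix a finite-type model $\pi_0 : \P(\eE_0) \to X_0$; this is rational classically, so Observation \ref{lemma: perfection preserves rationality} gives $R\pi_{\infty,*}\eO_Y = \eO_X$ for the perfected projection $\pi_\infty: Y \to X$. By the projection formula $\pi_{\infty,*} \pi_\infty^* = \id$ on $\Perf^T(X)$, so $\pi_\infty^*$ is a split monomorphism at the level of any localizing invariant $\E^T$. Naturality of $\tr$ with respect to both $\pi_\infty^*$ and $\pi_{\infty,*}$ yields the identity $\tr_X = \pi_{\infty,*} \circ \tr_Y \circ \pi_\infty^*$; if $\tr_Y$ is invertible then a direct computation (using the two naturality squares together with the split identities) shows $s := \pi_\infty^* \circ \tr_Y^{-1} \circ \pi_{\infty,*}$ is a two-sided inverse of $\tr_X$.

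The main obstacle is that a naive perfect projective bundle formula $\E^T(Y) \simeq \bigoplus_{i=0}^{r-1} \E^T(X)$ does \emph{not} hold: one computes for instance $K_0((\P^1_{\F_p})_{\perf}, \F_p) = \F_p$ rather than $\F_p^2$, because Frobenius acts on the first Chern class by multiplication by $p$, which vanishes mod $p$. The forward and backward implications therefore genuinely require different arguments; the colimit over Frobenius handles the first and the retraction from rationality handles the second.
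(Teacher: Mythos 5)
Your proof is correct, and the forward implication $(i) \Rightarrow (ii)$ is essentially identical to the paper's: apply the (non-perfect) projective bundle formula compatibly with the trace, then pass to the colimit along Frobenius. The backward implication $(ii) \Rightarrow (i)$ is where you diverge mildly in presentation, though the underlying mechanism is the same. The paper observes that the projective bundle formula exhibits $\tr_X$ as a retract of $\tr_{\P(\eE)}$ \emph{compatibly with Frobenius} (the relevant inclusion being $\pi^*$ into the zeroth factor, which is preserved by $\varphi^*$), and then passes to the colimit. You instead set up the retraction directly on the perfect schemes: $R\pi_{\infty,*}\O_Y = \O_X$, so by the projection formula $\pi_{\infty,*}\pi_\infty^* = \id$ on $\Perf^T(X)$, exhibiting $\E^T(X)$ as a retract of $\E^T(Y)$ for any localizing invariant, naturally in the trace. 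This is slightly cleaner in that it sidesteps the need to argue Frobenius-compatibility of a non-perfect decomposition, at the cost of invoking the projection formula and rationality (Observation \ref{lemma: perfection preserves rationality}) for the perfected map; both routes hinge on the same split adjunction $\pi^* \dashv \pi_*$ with $\pi_*\pi^* = \id$. Your closing remark about the failure of the naive perfect projective bundle formula $\E^T(Y) \simeq \bigoplus \E^T(X)$ is a good sanity check and explains why a retraction argument is genuinely needed.

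One small correction: your candidate inverse $s := \pi_\infty^* \circ \tr_Y^{-1} \circ \pi_{\infty,*}$ does not type-check as written; it should read $s := \pi_{\infty,*} \circ \tr_Y^{-1} \circ \pi_\infty^*: HH^T(X,k) \to K^T(X,k)$. With that ordering, naturality of $\tr$ with respect to $\pi_\infty^*$ (respectively $\pi_{\infty,*}$) and the identity $\pi_{\infty,*}\pi_\infty^* = \id$ give $\tr_X \circ s = \pi_{\infty,*}\tr_Y\tr_Y^{-1}\pi_\infty^* = \id$ and $s \circ \tr_X = \pi_{\infty,*}\tr_Y^{-1}\tr_Y\pi_\infty^* = \id$, as you intend.
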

\begin{proof}
By Proposition \ref{proposition: projective bundle formula for localizing invariants} we have natural identifications $K^T(\P(\eE), k) = \prod_{i=0}^{r-1} K^T(X, k)$ and $HH^T(\P(\eE), k) = \prod_{i=0}^{r-1} HH^T(X, k)$. Hence $\tr$ is an isomorphism on $X$ if and only if it is an isomorphism on $\P(\eE)$. 
Now, $K_T(Y, k) = \colim_{\varphi^*} K_T(\P(\eE), k)$ and $HH_T(Y, k) = \colim_{\varphi^*} HH_T(\P(\eE), k)$ compatibly with these decompositions.

\noindent
(i) $\implies$ (ii). If $\tr$ is an isomorphism for $X$, it is an isomorphism for $\P(\eE)$ and hence for $Y$ by exactness of filtered colimits. 

\noindent
(ii) $\implies$ (i). Assume $\tr$ is an isomorphism for $Y$. Since $X$ is perfect and $K^T(X,k) \to HH^T(X, k)$ is a retract of $K^T(\P(\eE), k) \to HH^T(\P(\eE), k)$ compatibly with the Frobenius, we know that $K^T(X, k)\to HH^T(X, k)$ is a retract of $K^T(Y, k) \to HH^T(Y, k)$. Hence $\tr$ is an isomorphism for $X$.
\end{proof}

\begin{lemma}\label{lemma: perfect grassmannian bundle formula}
Let $T$ act on $X$ and $\eE \in \VB^T(X)$ of rank $r$. Let $Y = \Flag_X(\eE, \mu)_{\perf}$ be the corresponding perfect flag variety of type $\mu$ over $X$. Let $\tr: K^T(-, k) \to HH^T(-, k)$ be the trace map. Then the following are equivalent:
\begin{itemize}
    \item[(i)] $\tr$ is isomorphism for $X$,
    \item[(ii)] $\tr$ is isomorphism for $Y$.
\end{itemize}
\end{lemma}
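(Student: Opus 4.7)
The plan is to reduce the lemma to iterated applications of Lemma \ref{lemma: stability on perfect projective bundles} via the tower of projective bundles relating $X$ and $Y$ through the full flag bundle, in direct analogy with Corollary \ref{corollary: classical trace and projective bundles}.

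First, I would fix a finite type model $X_0$ and $\eE_0 \in \VB^{T_0}(X_0)$ lifting $X$ and $\eE$, and let $W_0 = \Flag_{X_0}(\eE_0)$ be the associated full flag bundle. As recalled in Discussion \ref{discussion: partial flag variety bundles}, the natural maps $W_0 \xrightarrow{g_0} Y_0 \xrightarrow{f_0} X_0$ are such that both $g_0$ and the composition $h_0 = f_0 \circ g_0$ factor as towers of $T_0$-equivariant projective bundles. Taking perfections yields $W \xrightarrow{g} Y \xrightarrow{f} X$; since perfection commutes with fiber products by \hyperlink{property: limits}{(ii)}, each intermediate scheme in these towers is again obtained as the perfection of a projective bundle of an equivariant vector bundle over the previous stage, so Lemma \ref{lemma: stability on perfect projective bundles} applies at each step.

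Iterating Lemma \ref{lemma: stability on perfect projective bundles} along the tower decomposition of $h_0$ then shows that $\tr$ is an isomorphism for $X$ if and only if it is an isomorphism for $W$. Arguing identically along the tower for $g_0$ gives that $\tr$ is an isomorphism for $Y$ if and only if it is an isomorphism for $W$. Combining the two equivalences yields the lemma. No substantial obstacle beyond that already handled in Lemma \ref{lemma: stability on perfect projective bundles} is encountered: the partial flag case reduces to the rank-one projective bundle case purely by classical geometry, and stability under perfection is inherited from the single-stage result.
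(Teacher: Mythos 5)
Your proposal matches the paper's proof: both take the perfection of the diagram \eqref{equation: partial flag variety bundles} from Discussion \ref{discussion: partial flag variety bundles} and apply Lemma \ref{lemma: stability on perfect projective bundles} iteratively along the projective-bundle towers factoring $g$ and $h$, combining the two chains of equivalences to conclude for $f$. You simply spell out more explicitly the passage to finite-type models and the fact that perfection commutes with fiber products, but the argument is the same.
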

\begin{proof}
Take perfection of \eqref{equation: partial flag variety bundles}. Applying Lemma \ref{lemma: stability on perfect projective bundles} multiple times for the bundles appearing in the factorizations of $g$ and $h$, we deduce the result for $f$. 
\end{proof}

\begin{example}\label{lemma: perfect trace for projective space}
Let $\eE \in \VB^T(\pt)$. Denote $\lambda_1, \dots, \lambda_m$ the set of distinct weights of $T$ on $\eE$. Let $Y = \P(\eE)_{\perf}$. Then the trace map 
$$K^T(Y) \xrightarrow{\simeq} \RG(\Fix_{\frac{T}{T}}(Y), \O)$$
is an equivalence supported in degree zero. In degree zero, both sides are given by the ring
$$\left( k[t_1^{\pm 1}, \dots, t_n^{\pm 1}, x^{\pm 1}] / \left( \prod_{i=1}^m (x-\lambda_i) \right) \right)_{\perf}.$$
\end{example}
\begin{proof}
For the point, the trace map is an isomorphism concentrated in degree zero. The same follows for $\P(\eE)$ by the projective bundle formula for $K(-, k)$ and $HH(-, k)$. This isomorphism is compatible with Frobenius pullback and taking the colimit along it is exact. Hence $K^T(Y) \to HH^T(Y, k)$ is still an isomorphism supported in degree zero.

The explicit presentation follows by the projective bundle formula for $\P(\eE)$ and by noting that perfection discards nilpotence (so weight multiplicities don't contribute).
\end{proof}

Finally, let us record how this plays out for perfect stratified projective bundles.
\begin{lemma}\label{lemma: K-theory of perfect stratified grassmannian bundles}
Let $X \in \Sch^{\pfp, T}_k$ and $\eF \in \Qcoh^T(X)$. Assume there is a finitely presented model $X_0 \in \Sch^{\fp, T_0}_k$ with an ample family of line bundles so that $\eF$ is a pullback of a coherent sheaf $\eF_0 \in \Coh^{T_0}(X_0)$. Let $Y = \Grass_X(\eF, 1)_{\perf}$ be the associated perfect stratified Grassmannian.
Then $K^T(X)$ is a natural direct summand of $K^T(Y)$; similarly for other localizing invariants and natural maps between them.
\end{lemma}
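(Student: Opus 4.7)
My plan is to reduce to Proposition \ref{proposition: K-theory and derived grassmannians} applied to a derived enhancement of the classical Grassmannian, and then pass to perfection via Lemma \ref{proposition: K-theory of inverse limit}.

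First I would choose a $T_0$-equivariant perfect complex $\eE_0 \in \Perf^{T_0}(X_0)$ of Tor-amplitude $[1,0]$ satisfying $\eH_0(\eE_0) = \eF_0$. Concretely, pick a $T_0$-equivariant surjection $\eL_0 \twoheadrightarrow \eF_0$ from a vector bundle, followed by a $T_0$-equivariant surjection $\eL_1 \twoheadrightarrow \ker(\eL_0 \twoheadrightarrow \eF_0)$ from another vector bundle, and set $\eE_0 = [\eL_1 \to \eL_0]$; the resulting two-term complex has Tor-amplitude $[1,0]$ and zeroth homology $\eF_0$ by construction.

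Next I would apply Proposition \ref{proposition: K-theory and derived grassmannians} to $\eE_0$, obtaining that $\E^{T_0}(X_0)$ is a natural direct summand of $\E^{T_0}(Y_0^{\mathbf L})$ for any localizing invariant $\E$, where $Y_0^{\mathbf L} = \Grass_{X_0}(\eE_0, 1)$ is the derived relative Grassmannian of Recollection \ref{recollection: derived grassmannian bundles}, compatibly with natural maps between localizing invariants. By the second part of that recollection, the classical truncation of $Y_0^{\mathbf L}$ is exactly the classical relative Grassmannian $\Grass_{X_0}(\eF_0, 1)$.

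Finally I would pass to perfection. By property \hyperlink{property: classical}{(vi)}, perfect derived schemes are classical, so the perfection of $Y_0^{\mathbf L}$ agrees with the perfection of its classical truncation $\Grass_{X_0}(\eF_0, 1)$, which is $Y$. By Lemma \ref{proposition: K-theory of inverse limit} (and its straightforward analogue for any localizing invariant, since perfect complexes on a filtered inverse limit of qcqs schemes along affine maps form the corresponding filtered colimit), evaluating $\E^{T_0}(-)$ and taking the filtered colimit along Frobenius pullback recovers $\E^T(X)$ on one side and $\E^T(Y)$ on the other. Since filtered colimits of spectra preserve direct summands, this yields the desired natural direct summand $\E^T(X) \hookrightarrow \E^T(Y)$, compatibly with natural maps between localizing invariants.

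The main obstacle is ensuring that a global two-term model $\eE_0$ exists; on a general finitely presented base this can fail, and one would then either pass to a $T_0$-equivariant Zariski cover and descend the direct-summand statement using functoriality of the semi-orthogonal decomposition from Recollection \ref{recollection: derived grassmannian bundles}, or verify existence of such a global $\eE_0$ in the applications of interest.
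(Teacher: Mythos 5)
Your proof is essentially the same as the paper's: write $\eF_0$ as $\eH_0$ of a $T_0$-equivariant perfect complex of Tor-amplitude $[1,0]$, apply Proposition \ref{proposition: K-theory and derived grassmannians} to the resulting derived Grassmannian, and pass to perfection (which kills the derived structure, recovering the classical $Y$). The only bookkeeping difference is that you form the derived Grassmannian over the finite-type model $X_0$ and take a single Frobenius colimit, while the paper pulls $\eE_0$ back to $X$ first; the outcome is identical. Your closing caveat about the existence of a global two-term resolution $\eE_0$ is a reasonable flag that the paper does not address inside the proof — the implicit assumption is that $X_0$ has enough $T_0$-equivariant vector bundles (true for quasi-projective $X_0$ with a split torus action, which covers the applications); indeed, a later remark in the paper points out that in the affine Grassmannian setting the complex $\eE_\bullet = [\eE \xrightarrow{t} \eE]$ supplies a preferred such presentation, making the existence question moot there.
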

\begin{proof}
By the assumptions on $X_0$, the quotient stack $T_0 \backslash X_0$ has the resolution property \cite[\S 2]{Tho87}. We can thus write $\eF_0$ as the zeroth homology $\eH_0$ of a $T_0$-equivariant perfect complex on $X_0$ of Tor-amplitude $[1, 0]$. Pulling this back to $X$ presents $\eF$ on $X$ as $\eH_0$ of a $T$-equivariant perfect complex $\eE$ of Tor-amplitude $[1, 0]$. Now we can form the derived enhancement $\Grass_X(\eE, 1)$ of $\Grass_X(\eF, 1)$ of \cite{Jia22a, Jia22b} and apply Recollection \ref{recollection: semiorthogonal decompositions and localizing invariants} and Proposition \ref{proposition: K-theory and derived grassmannians}. This remains true after perfection by exactness of the Frobenius colimit. Since perfection discards the derived structure, the final statement indeed features the classical $Y = \Grass_X(\eF, 1)_{\perf} = \Grass_X(\eE, 1)_{\perf}$. 
\end{proof}

\subsection{Global functions on perfect fixed-point schemes}\label{section: global functions on perfect fixed-point schemes}
We record a few more auxiliary results on global functions on perfect fixed-point schemes. We hope this gives a clear geometric intuition about their behaviour, independent of the categorical construction of Hochschild homology above. Let $G$ be the perfection of $\GL_n$ (or alternatively any perfect, split, semisimple, simply connected group).

\paragraph{Perfect proper excision for functions on fixed-point schemes.}
Consider an abstract blowup square \eqref{equation: perfect abstract blowup square}. Applying the functor $\Fix_{\frac{G}{G}}(-)$ we get the following pullback square.
\begin{equation}\label{equation: fixed-point scheme of an abstract blowup}
    \begin{tikzcd}
        \Fix_{\frac{G}{G}}(Y) \arrow[d] \arrow[r, hookleftarrow] & \Fix_{\frac{G}{G}}(E) \arrow[d] \\
        \Fix_{\frac{G}{G}}(X) \arrow[r, hookleftarrow] & \Fix_{\frac{G}{G}}(Z)
    \end{tikzcd}
\end{equation}

The corresponding proper excision can be seen geometrically as follows.
\begin{lemma}\label{lemma: perfect descent for functions on fixed-point schemes}
There is a distinguished triangle
\begin{equation*}
    \RG(\Fix_{\frac{G}{G}}(X), \O) \to \RG(\Fix_{\frac{G}{G}}(Y), \O) \oplus \RG(\Fix_{\frac{G}{G}}(Z), \O) \to \RG(\Fix_{\frac{G}{G}}(E), \O).
\end{equation*}
and similarly for the versions with $\Fix_S(-)$.
In particular, there is an exact sequence   
\begin{equation*}
    0 \to H_0(\Fix_{\frac{G}{G}}(X), \O) \to H_0(\Fix_{\frac{G}{G}}(Y), \O) \oplus H_0(\Fix_{\frac{G}{G}}(Z), \O) \to H_0(\Fix_{\frac{G}{G}}(E), \O) \to \cdots
\end{equation*}
\end{lemma}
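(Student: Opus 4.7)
The plan is to reduce the statement to property \hyperlink{property: functions}{(iv)} from \S \ref{section: perfect schemes} --- namely the fact that global functions on pfp perfect schemes satisfy proper excision --- and then pass from scheme-level to stack-level by derived $G$-invariants.

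First I would verify that applying $\Fix_G(-)$ to the perfect abstract blowup square \eqref{equation: perfect abstract blowup square} produces a perfect abstract blowup square of $G$-equivariant pfp perfect schemes. Proposition \ref{proposition: open and closed immersions}(iii) gives that both $\Fix_G(Z) \hookrightarrow \Fix_G(X)$ and $\Fix_G(E) \hookrightarrow \Fix_G(Y)$ are closed immersions; using the fibered diagram \eqref{equation: fixed-point schemes and base change} together with $E = Y \times_X Z$ one computes $\Fix_G(E) = \Fix_G(Y) \times_{\Fix_G(X)} \Fix_G(Z)$, so the square is Cartesian. Properness of $\Fix_G(f)$ follows by composing the closed immersion $\Fix_G(Y) \hookrightarrow G \times Y$ from Proposition \ref{proposition: basic properties of fixed point families}(i) with the base change $G \times Y \to G \times X$ of the proper map $f$, which factors through the closed subscheme $\Fix_G(X) \subseteq G \times X$. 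Since $\Fix_G$ preserves locally closed stratifications, $\Fix_G(f)$ is an isomorphism over $\Fix_G(X) \setminus \Fix_G(Z)$. Finally, the surjectivity hypothesis on $\Fix_{\frac{G}{G}}(Y) \to \Fix_{\frac{G}{G}}(X)$ lifts to surjectivity of $\Fix_G(Y) \to \Fix_G(X)$, since the quotient maps $\Fix_G \to \Fix_{\frac{G}{G}}$ are smooth surjections.

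Property \hyperlink{property: functions}{(iv)} then yields a $G$-equivariant distinguished triangle
\begin{equation*}
\RG(\Fix_G(X), \O) \to \RG(\Fix_G(Y), \O) \oplus \RG(\Fix_G(Z), \O) \to \RG(\Fix_G(E), \O).
\end{equation*}
To promote this to the stack quotient, I would note that $\RG(\Fix_{\frac{G}{G}}(-), \O)$ is computed from $\RG(\Fix_G(-), \O)$ by derived $G$-invariants (via, for instance, the Čech cosimplicial object of the torsor $\Fix_G(-) \to \Fix_{\frac{G}{G}}(-)$). Being a limit, this functor preserves fiber sequences, giving the desired distinguished triangle for $\Fix_{\frac{G}{G}}$. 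The $\Fix_S(-)$ variant follows by the same argument after base change along $S \to \frac{G}{G}$, and the long exact cohomology sequence is immediate by taking $\pi_*$.

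No step here presents a substantive obstacle: property \hyperlink{property: functions}{(iv)} and the structural results on $\Fix_G$ from \S \ref{section: fixed-point schemes of group actions} are the only nontrivial inputs, and the descent to the stack is formal. The only point worth a moment of care is confirming that the hypothesis genuinely encodes surjectivity of the scheme map $\Fix_G(Y) \to \Fix_G(X)$, without which the fixed-point square would merely be a partial excision datum and the triangle could fail.
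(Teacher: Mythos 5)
Your proposal takes essentially the same route as the paper: reduce to the fixed-point scheme $\Fix_G(-)$, check that the resulting square is an abstract blowup (to which the surjectivity hypothesis is essential), invoke property (iv) for proper excision on perfect schemes, and then pass to the stack quotient $\Fix_{\frac{G}{G}}(-)$ by applying derived $G$-invariants. You spell out the Cartesian-ness and properness checks in more detail than the paper, which is welcome; one small slip is calling $\Fix_G(-) \to \Fix_{\frac{G}{G}}(-)$ a \emph{smooth} surjection — since $G$ is a perfected (hence non-smooth) group scheme, it is only an fpqc surjection, but this does not affect the argument since surjectivity still pulls back.
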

\begin{proof}
We first treat the fixed-point scheme $\Fix_G(-)$. 
Since perfections are compatible with open and closed immersions by Proposition \ref{proposition: open and closed immersions}, the square \eqref{equation: fixed-point scheme of an abstract blowup} is still an abstract blowup. The induced sequence is exact by \hyperlink{property: functions}{(iv)}.
To pass to $\Fix_{\frac{G}{G}}(-)$, apply the right derived functor of $(-)^G$. The version for $\Fix_S(-)$ works by the same reasoning.
\end{proof}

\paragraph{A fiberwise criterion.}
We record the following simple fiberwise criterion for checking whether the trace map is an isomorphism in degree zero. Let $\s \hookrightarrow G$ be the perfect Steinberg section.
For $X = \pt$ it holds that $\Fix_{\s}(\pt) \cong \s$ and the trace map induces canonical isomorphisms
\begin{equation*}
\Fix_T(\pt) \xrightarrow{\cong} \Spec K^T_0(\pt)_k,
\qquad \text{and} \qquad
\Fix_{\s}(\pt) \xrightarrow{\cong} \Spec K^G_0(\pt)_k.
\end{equation*}
The degree zero trace map can be understood fiberwise over the equivariant base by the following simple criterion.
\begin{lemma}\label{lemma: fiberwise criterion}
Let $X \in \Perf^{\pfp, G}_{k}$ and assume that for each closed geometric point $s \in \s$, the trace map 
\begin{equation*}
    \begin{tikzcd}
        \Fix_{\s}(X) \arrow[rr, "\tr_X"] \arrow[dr]& & \Spec K^G_0(X)_k \arrow[dl] \\
        & \s &
    \end{tikzcd}
\end{equation*}
induces a bijection on $\pi_0$ of the fibers over $s$.
Then $K^G_0(X)_k \to H_0(\Fix_{\s}(X), \O)$ is an isomorphism.
\end{lemma}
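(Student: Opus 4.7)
The plan is to read the statement as a direct application of the perfect Stein factorization criterion of Lemma \ref{lemma: perfect stein 2}. I would fit the given diagram into the format of that lemma by taking $\Fix_{\s}(X)$ as the source, $\Spec K^G_0(X)_k$ as the affine target, $\s$ as the base, and $\tr_X$ as the horizontal map $g$. The map $\Spec K^G_0(X)_k \to \s$ is obtained by pulling back the isomorphism $K^G_0(\pt)_k \cong \O_{\s}$ of Lemma \ref{lemma: perfect G-equivariant point} along $X \to \pt$; it is affine since both schemes are affine.

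Next I would check the remaining hypotheses. Perfect properness of the structure map $\Fix_{\s}(X) \to \s$ follows from Proposition \ref{proposition: basic properties of fixed point families}(ii) applied to the (perfectly proper) $X$, followed by base change along the closed immersion $\s \hookrightarrow G$. The diagram in the statement then commutes over $\s$ by construction of $\tr_X$, and the fiberwise $\pi_0$ bijection is precisely the input hypothesis. Applying Lemma \ref{lemma: perfect stein 2} then gives that $\tr_X$ induces an isomorphism on global sections, which unpacks to the desired identification
\begin{equation*}
K^G_0(X)_k = \RG(\Spec K^G_0(X)_k, \O) \xrightarrow{\cong} \RG(\Fix_{\s}(X), \O) = H_0(\Fix_{\s}(X), \O),
\end{equation*}
the last equality using that higher cohomology of $\O$ on $\Fix_{\s}(X)$ vanishes by affineness over $\s$ combined with Serre vanishing arguments via perfect base change (Lemma \ref{lemma: perfect base change on fixed point schemes}).

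The one point that would require care is that Lemma \ref{lemma: perfect stein 2} is formulated at all geometric points of $\s$, whereas our hypothesis is only stated for closed geometric points. This is the main (if mild) obstacle, and I would resolve it by noting that every geometric point of $\s$ specializes to a closed geometric point and that $\pi_0$ of geometric fibers is compatible with specialization along a perfectly proper morphism, so the fiberwise criterion on closed points propagates to all points. Modulo this reduction, the proof is essentially a one-line application of Lemma \ref{lemma: perfect stein 2}.
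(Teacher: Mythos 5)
Your proposal takes the same route as the paper: the paper's proof is literally the one line ``Follows from Lemma~\ref{lemma: perfect stein 2},'' and you have correctly identified that lemma as the engine and fleshed out how to feed the diagram into it. Two remarks.

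First, the detour through derived global sections at the end is both unnecessary and incorrect: the conclusion of Lemma~\ref{lemma: perfect stein 2} is already the statement about $H_0$ (it says $g$ induces an isomorphism \emph{on global sections}, i.e.\ $\O_T \xrightarrow{\cong} g_*\O_X$ in degree $0$), so you should conclude $K^G_0(X)_k \cong H_0(\Fix_\s(X),\O)$ directly without invoking any $\RG$. Moreover the claim that higher cohomology of $\O$ on $\Fix_\s(X)$ vanishes ``by affineness over $\s$'' is false as written --- $\Fix_\s(X)\to\s$ is perfectly proper, not affine, and its higher direct images need not vanish. Luckily this is orthogonal to what is actually needed.

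Second, you are right to flag the discrepancy between ``closed geometric points'' in the statement and ``all geometric points'' in Lemma~\ref{lemma: perfect stein 2}; the paper silently elides this. However, the repair you sketch is not quite correct: $\pi_0$ of geometric fibers of a proper morphism is \emph{not} compatible with specialization in the way suggested (the number of components can jump or drop at special points). The clean fix is via Jacobsonness: the locus in $\s$ where $\pi_0(\Fix_\s(X)_s) \to \pi_0((\Spec K^G_0(X)_k)_s)$ is a bijection is constructible (both sides arise from the Stein factorization, hence are finite étale over a constructible stratification), and a constructible subset of a perfectly finitely presented $k$-scheme containing every closed point must be everything, since such schemes are Jacobson. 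With that substitution your argument is complete and agrees with the paper's intent.
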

\begin{proof}
    Follows from Lemma \ref{lemma: perfect stein 2}.
\end{proof}

\paragraph{Geometric viewpoint on the projective bundle formula.}

The degree zero part of Lemma \ref{lemma: stability on perfect projective bundles} for $G$ can be also seen directly from the geometry of the fixed-point scheme as follows. Let $X \in \Sch_k^{\pfp, G}$, $\eE \in \VB^G(X)$ and $Y = \P(\eE)_{\perf}$. We claim that
\begin{equation*}
 \tr: K^G_0(X, k) \to H_0(\Fix_{\s}(X), \O) \ \text{isomorphism} \iff   \tr: K^G_0(Y, k) \to H_0(\Fix_{\s}(Y), \O) \ \text{isomorphism}.
\end{equation*}
Indeed, consider the diagram
\begin{equation}
\begin{tikzcd}
    \Fix_{\s}(Y) \arrow[d] \arrow[r, "\tr"]&  \Spec K^G_0(Y, k) \arrow[d] \\
    \Fix_{\s}(X)  \arrow[r, "\tr"]&  \Spec K^G_0(X, k)
\end{tikzcd}
\end{equation}
Let $(g, x): \pt \to \Fix_G(X)$ be a geometric point and $(g, x)': \pt \to \Fix_G(X) \to \Spec K^G_0(X, k)$ the corresponding geometric point of $\Spec K^G_0(X, k)$.
We restrict attention to the fiber $\Fix_G(Y)_{(g, x)} = \Fix_{g}(Y_x)$ of $\Fix_G(Y)$ over $(g, x)$, whose points are given by the $1$-dimensional subspaces of $\eE_x$ fixed by $g$. Hence $\pi_0(\Fix_G(Y)_{(g, x)})$ identifies with the set $\Eig_g(\eE_x)$ of distinct eigenvalues of $g$ on $\eE_x$.

By the projective bundle formula for equivariant $K$-theory, we also have:
\begin{equation}
\Spec (K^G_0(Y, k))_{(g, x)'} = \Spec \left( k[\xi] \ / \left( \sum_{i=0}^r (-1)^{i} \cdot \tr_g [\wedge^i \eE_x] \cdot \ \xi^{r-i} \right)  \right)_{\perf} 
\end{equation} 
Factoring this polynomial, it follows that $\pi_0(\Spec (K^G_0(Y, k))_{(g, x)'})$ is also given by the set $\Eig_g(\eE_x)$ of distinct eigenvalues of $g$ on $\eE_x$.
The restricted trace map is a bijection
\begin{equation}\label{equation: relative fiber bijection}
\tr: \pi_0(\Fix_{\s}(Y)_{(g, x)}) \xrightarrow{\cong}   \pi_0(\Spec(K^G_0(Y, k))_{(g, x)'})
\end{equation}
given by identity on $\Eig_g(\eE_x)$ under the above identifications. We then conclude by Lemma \ref{lemma: fiberwise criterion}.
\section{Trace map for perfect affine Grassmannians}\label{section: trace map for GL_2 affine grassmannian}

Our main examples of interest are the affine Grassmannians and affine Schubert varieties, in both classical and perfect setups. As a proof of concept, we prove our main result for $T$-equivariant $K$-theory of affine Schubert varieties $X_{\le \mu}$ in the perfect affine Grassmannian $\Gr$ for $\GL_n$.

We recall affine Grassmannians and their perfect versions in \S \ref{subsection: affine grassmannians and their perfections}. We prove our conjecture for the $\GL_2$ affine Grassmannian in \S \ref{subsection: trace for perfect GL_2 affine grassmannian} using equivariant proper excision from Theorem \ref{lemma: homotopy fiber square for K_T} and usual projective bundle formula; this allows for explicit computations. We then in \S \ref{section: trace for GL_n affine grassmannian} use the semi-orthogonal decomposition for derived projective bundles \cite{Jia23} to prove the general case of the $\GL_n$ affine Grassmannian.

\subsection{Affine Grassmannians and their perfections}\label{subsection: affine grassmannians and their perfections}
We make a brief overview of affine Grassmannians and their perfect versions. This is a standard material; up to perfection, we follow the excellent introduction \cite{Zhu15}. For perfection and the Witt-vector affine Grassmannian, see \cite{Zhu14, BS16}.

\paragraph{Notation.}
Let $k$ be a perfect field of positive characteristic; we take $k=\F_p$. As above, we denote $\Sch^{\pfp}_k$ the category of perfectly finitely presented perfect schemes over $k$. Consider the reductive group $G_0 = GL_n$ over $k$ and let $G$ be its perfection. Let $T_0$ be its split diagonal torus and $T$ its perfection. Let $\X_{*} := \X_{*}(T_0)$ be the cocharacter lattice of $T_0$; we denote by $\X^{+}_{*}:= \X^{+}_{*}(T_0)$ the monoid of dominant cocharacters (with respect to the upper-triangular Borel subgroup). If $\mu_{\bullet} = (\mu_1, \mu_2, \dots, \mu_d)$ is a sequence of cocharacters, we denote by $\mu = |\mu_\bullet| = \mu_1 + \mu_2 + \dots + \mu_d$ its sum.

\paragraph{Affine Grassmannian and its perfection.}
We mostly follow the notation from \cite{Zhu15}. By base change, $G_0$ gives a reductive group over $k \llp t \rrp$ with an integral model over $k\llb t \rrb$. Associated to these we have the loop groups with functors of points sending a test algebra $R \in \Alg_k$ to
\begin{equation*}
    \lo G_0: R \mapsto G_0(R(\!(t)\!))), \qquad \qquad \lo^+ G_0: R \mapsto G_0(R\llbracket t \rrbracket), \qquad  \qquad \lo^h G_0: R \mapsto G_0(R\llbracket t \rrbracket / (t^{h+1})).
\end{equation*}
We call these respectively {\it loop group} $\lo G_0$, {\it positive loop group} $\lo^+ G_0$ and {\it truncated loop groups} $\lo^h G_0$. The quotient
\begin{equation*}
(\Gr)_0 = \lo G_0 / \lo^+ G_0     
\end{equation*}
gives a well-defined ind-scheme called the {\it affine Grassmannian} of $G_0$. 
In the case of $\GL_n$, its functor of points on $k$-algebras is given by
\begin{equation*}
    R \mapsto \{ \Lambda \subseteq R\llp t \rrp^{\oplus n} \mid \Lambda \text{ full } R\llb t \rrb\text{-lattice} \}.
\end{equation*}

We denote $\lo G$, $\lo^+ G$, $\lo^h G$, $\Gr$ the perfections of these objects. We call $\Gr$ the {\it (equal characteristic) perfect affine Grassmannian}; it is the quotient
\begin{equation*}
    \Gr = \lo G / \lo^+ G.
\end{equation*}
From now on, everything is implicitly perfect. For the sake of readability, we omit this adjective. Since the same results hold before perfection, there is no risk of confusion.

\paragraph{Affine Schubert cells.}
For $\mu \in \X_{*}$, we let $t^\mu$ be the the image of the point $t \in \lo \Gm(k)$ under $\lo \mu: \lo \Gm(k) \to \lo G(k)$. The map $\mu \mapsto t^{\mu}$ gives a group homomorphism $\X_{*} \to \lo G(k)$. 
The $\lo^+ G$ (double) cosets induce a scheme theoretic stratification
\begin{align*}
\lo G(k) = \bigsqcup_{\mu \in \X_{*}^+} \lo^+G(k) \cdot t^\mu \cdot \lo^+G(k) \qquad \text{and} \qquad  \Gr(k) = \bigsqcup_{\mu \in \X_{*}^+} \lo^+G(k) \cdot t^\mu 
\end{align*}
of $\lo G$ resp. $\Gr$ into locally closed pieces $\lo G_{\mu}$ resp. $\Gr_{\mu}$ labelled by $\mu \in \X^{+}_{*}$ and called the {\it affine Schubert cells}. 

Explicitly, $\Gr_{\mu}$ is an affine bundle of dimension $(2\rho, \mu)$ over the partial flag variety $G/P_{\mu}$, where $P_\mu$ is the parabolic subgroup associated to $\mu$. The structure map is given by setting the formal parameter $t$ equal to zero as follows:
\begin{equation}\label{diagram: comparison of torsors}
\begin{tikzcd}[column sep=huge]
\lo G_{\mu} \arrow[d] \arrow[r, hookleftarrow] &  
\lo^+ G  \cdot t^{\mu} \arrow[d] \arrow[r, "(g \mapsto g \cdot t^{-\mu})"] &
\lo^+ G  \arrow[d] \arrow[r, twoheadrightarrow, "(t \mapsto 0)"]&
G  \arrow[d]\\
\Gr_{\mu} \arrow[r, equal] & \Gr_{\mu} \arrow[r, equal] & \Gr_{\mu} \arrow[r, twoheadrightarrow] & G/P_{\mu}
\end{tikzcd}
\end{equation}
The vertical maps in \eqref{diagram: comparison of torsors} are right torsors for the following groups and comparison maps between them:
\begin{equation*}
\begin{tikzcd}[column sep=50]
\lo^+G \arrow[r, hookleftarrow] & 
\lo^+G \cap (t^{-\mu} \cdot \lo^+G \cdot t^{\mu} ) \arrow[r, "g \mapsto t^{\mu}gt^{-\mu}"] & 
(t^{\mu} \cdot \lo^+G \cdot t^{-\mu}) \cap \lo^+G \arrow[r, "t\mapsto 0"]&
P_{\mu}
\end{tikzcd}
\end{equation*}
The map $\Gr_{\mu} \to G/P_{\mu}$ is an isomorphism if and only if $\mu$ is minuscule.

\paragraph{Affine Schubert varieties.}
The closures $X_{\leq \mu} = \Gr_{\leq \mu}$ of the affine Schubert cells $\Gr_{\mu}$ respect this stratification: there is a set-theoretic decomposition $\Gr_{\leq \mu} = \bigsqcup_{\lambda \leq \mu} \Gr_{\lambda} $. These $\Gr_{\leq \mu}$ are called the {\it affine Schubert varieties}. In particular, they carry an action of $\lo^+G$.

In contrast to the ind-scheme $\Gr$, affine Schubert varieties are of finite type. The singular locus of $\Gr_{\leq \mu}$ is precisely $\Gr_{\leq \mu} \setminus \Gr_{\mu} = \bigsqcup_{\lambda < \mu} \Gr_{\lambda}$.
In particular, $\Gr_{\leq \mu}$ is smooth if and only if $\mu$ is minimal in the dominance order.

The singularities of $\Gr_{\leq \mu}$ are rational by \cite[Theorem 2.1.21]{Zhu15} and Observation \ref{lemma: perfection preserves rationality}.

In terms of the moduli description, $\Gr_{\leq \mu}$ is the closed subfunctor of $\Gr$ parametrizing those $\Lambda$ which are in relative position $\leq \mu$ with respect to the standard lattice $\Lambda_0 = k\llb t \rrb^{\oplus n}$. 

\paragraph{Affine Demazure resolutions.}
For any $d$-tuple $\mu_\bullet = (\mu_1, \dots, \mu_d)$ of dominant cocharacters
$\mu_1, \dots \dots, \mu_d \in \X_{*}^+$, the associated {\it affine Demazure variety} is the iterated twisted product
\begin{align*}
Y_{\leq \mu_{\bullet}} = \Gr_{\leq \mu_\bullet}  &:= \lo G_{\leq \mu_1 }\overset{\lo^+G}{\times} \lo G_{\leq \mu_2 }\overset{\lo^+G}{\times} \dots \overset{\lo^+G}{\times} \lo G_{\leq \mu_d } \overset{\lo^+G}{\times} \pt \\
& = \Gr_{\leq \mu_1 }\overset{\sim}{\times} \Gr_{\leq \mu_2 }\overset{\sim}{\times} \dots \overset{\sim}{\times} \Gr_{\leq \mu_d}.
\end{align*}
with respect to the $\lo^+G$--torsors $\lo G_{\leq \mu_i } \to  \Gr_{\leq \mu_i }$ for $i = 1, 2, \dots, d$.

Note that $\Gr_{\leq \mu_{\bullet}}$ carries a natural action of $\lo_{\leq \mu_1}G$ induced from the left translation on the first factor. In particular, it has an action of $\lo^+G$. The $(d-1)$-fold multiplication
\begin{equation*}
\lo G_{\leq \mu_1}\times \lo G_{\leq \mu_2 } \times \dots \times \lo G_{\leq \mu_d } \xrightarrow{m} \lo G_{\leq |\mu_\bullet|}
\end{equation*}
factors through the affine Demazure variety, giving rise to the convolution map
\begin{equation}\label{equation: convolution map}
\Gr_{\leq \mu_{\bullet}} \xrightarrow{m} \Gr_{\leq |\mu_{\bullet}|}.    
\end{equation}
This is a proper surjective $\lo^+G$-equivariant map. It restricts to an an isomorphism over the open cell $\Gr_{|\mu_{\bullet}|} \hookrightarrow \Gr_{\leq |\mu_{\bullet}|}$ in the target.

In terms of the moduli description, $\Gr_{\leq \mu_\bullet}$ parametrizes chains of lattices $(\Lambda_0, \Lambda_1, \dots, \Lambda_d)$ with $\Lambda_i$ and $\Lambda_{i-1}$ in relative position $\mu_i$ and $\Lambda_0$ the standard lattice. The convolution map sends this chain to $\Lambda_d$.

In particular, assume all $\mu_i$ are minuscule. Since each $\Gr_{\leq\mu_i} = G/P_{\mu_i}$ is smooth, $\Gr_{\leq \mu_{\bullet}}$ is smooth as well. In fact, it is an iterated Grassmannian bundle, coming from a sequence of equivariant algebraic vector bundles, which we now describe.

\paragraph{Affine Demazure resolutions as Grassmannian bundles.}
Given $\mu \in \X^{+}_{\bullet}$, the variety $X_{\leq \mu} := \Gr_{\leq \mu}$ carries a natural vector bundle $\eE = \eE_{\leq \mu}$ defined as follows. We write $\mo^{\fl}(R\llb t \rrb)$ for the abelian category of finite length $R\llb t \rrb$-modules. 

Let $\omega_1, \omega_2, \dots, \omega_n$ be the fundamental dominant coweights of $GL_n$ given as $\omega_j = (1, \dots, 1, 0, \dots, 0)$ with $j$ ones and $n-j$ zeroes. Let $\omega_1^*, \dots, \omega_n^*$ be given as $\omega^*_j = \omega_{n-j} - \omega_n = (0, \dots, 0, -1, \dots, -1)$ with $n-j$ zeroes and $j$ minus ones. For any $\mu \in \X^{+}_{*}$, there is a unique integer $m$ such that $\mu = b \omega_n + \mu_1 + \mu_2 + \dots + \mu_{d}$ with each $\mu_i$ among $\omega^*_1, \dots, \omega^*_{n-1}$. This decomposition is unique up to reordering the terms; we call $\lg(\mu) := d$ the length of $\mu$.

Note that $b$ is the smallest integer such that any $\Lambda \in X_{\leq \mu}(R)$ contains $t^{b} \Lambda_0$.
Then define
\begin{align*}
  \eE = \eE_{\leq \mu}: X_{\leq \mu}(R) & \to \mo^{\fl}(R\llb t \rrb) \\
     \Lambda & \mapsto \Lambda / t^{b}\Lambda_0.
\end{align*}
This assembles into a $\lop G$-equivariant vector bundle on $X_{\leq \mu}$, which moreover carries a nilpotent operator $t: \eE \to \eE$ making it an object of $\mo^{\fl}(R\llb t \rrb)$. Consider now
\begin{equation*}
  \eE_{\bullet} := [ \eE \xrightarrow{t} \eE ] \ \qquad \text{and} \qquad \eF = \eF_{\leq \mu} := \eH_0(\eE_{\bullet}) = \coker ( \eE \xrightarrow{t} \eE ).
\end{equation*}
Then $\eE_{\bullet}$ is a perfect complex of Tor amplitude $[1, 0]$ resolving the coherent sheaf $\eF$. The affine Schubert stratification of $X_{\leq \mu}$ is a flattening stratification for $\eF$, meaning that the restrictions of $\eF$ to affine Schubert cells are vector bundles.

The affine Demazure resolution is given inductively as follows. Write $\mu_{\bullet} = ( \lambda, \mu_d)$ with $\mu_d$ coming from above and $\lambda = \mu - \mu_d$. Say $\mu_d = \omega^*_j$ for some $1 \leq j \leq n-1$. Then the convolution map \eqref{equation: convolution map} is given by the structure map
\begin{equation*}
    Y_{\leq \mu_{\bullet}} = \Grass_{X_{\leq \mu}}(\eF, j) \xrightarrow{f} X_{\leq \mu}.
\end{equation*}
Denote $\eE'$ the preimage under $\eE \twoheadrightarrow \eF$ of the tautological quotient of $\eF$ on $Y_{\leq \mu_{\bullet}}$. The left-hand term of the tautological exact sequence on $Y_{\leq \mu_{\bullet}}$ 
$$0 \to \eE' \to f^* \eE \to \eE'' \to 0$$
is an equivariant coherent sheaf on $Y_{\leq \mu_{\bullet}}$ with an action of $t$ and one can continue inductively.

On the other hand, $Y_{\leq \mu_{\bullet}}$ is an honest Grassmannian bundle over $X_{\leq \lambda}$. For this, consider the vector bundle $\eG = \eG_{\leq \lambda}$ on $X_{\leq \lambda}$ given by
\begin{align*}
  \eG: X_{\leq \lambda}(R) & \to \mo(R) \hookrightarrow \mo^{\fl}(R\llb t \rrb) \\
     \Lambda & \mapsto t^{-1}\Lambda / \Lambda.
\end{align*}
This is a $\lop G$-equivariant vector bundle on $X_{\leq \lambda}$, carrying the zero action of $t$. We then have
\begin{equation*}
    Y_{\leq \mu_{\bullet}} = \Grass_{X_{\leq \lambda}}(\eG, j).
\end{equation*}
Altogether, we obtain the convolution diagram
\begin{equation}\label{diagram: partial demazure resolution as strarified grassmannian bundle}
    \begin{tikzcd}
       Y_{\leq \mu_{\bullet}} \arrow[r, "g"] \arrow[d, "f"] & X_{\leq \lambda} \\
       X_{\leq \mu}
    \end{tikzcd}
\end{equation}

\paragraph{Witt-vector affine Grassmannian.}
The analogs of the above spaces for mixed characteristic groups were constructed in \cite{Zhu14, BS16}. These objects are defined only in the perfect setup.

Given a non-archimedean local field of mixed characteristic $\mathscr{K}$ with ring of integers $\mathscr{O}$ and residue field $k$, base change of $G_0$ gives a reductive group over $\mathscr{K}$ with an integral model over $\mathscr{O}$. Let $\W(-)$ be the functor of ramified Witt vectors on perfect $k$-algebras, with a uniformizer $\varpi$. 
Associated to this we have the $p$-adic loop groups with functors of points sending a test algebra $R \in \Alg^{\perf}_k$ to
\begin{equation*}
    \lo G_0: R \mapsto G_0(\W(R)[\tfrac{1}{\varpi}]), \qquad \qquad \lo^+ G_0: R \mapsto G_0(\W(R)), \qquad  \qquad \lo^h G_0: R \mapsto G_0(\W_h(R)).
\end{equation*}
The quotient
\begin{equation*}
\Gr = \lo G / \lo^+ G     
\end{equation*}
is a well-defined perfect ind-scheme called the {\it Witt-vector affine Grassmannian}. 
In the case of $\GL_n$, its functor of points on perfect $k$-algebras is given by
\begin{equation*}
    R \mapsto \{ \Lambda \subseteq \W(R)[\tfrac{1}{\varpi}]^{\oplus n} \mid \Lambda \text{ full } \W(R)\text{-lattice} \}.
\end{equation*}
The discussion of the above paragraphs can be repeated in the Witt vector case upon replacing $R\llb t \rrb$ by $\W(R)$ and $t$ by $\varpi$; see \cite{Zhu14, BS16}. The resulting affine Schubert varieties are rational by \cite[Remark 8.5]{BS16}.

We still have the sheaf
\begin{align*}
  \eE: X_{\leq \mu}(R) & \to \mo^{\fl}(\W(R)) \\
     \Lambda & \mapsto \Lambda / \varpi^{b} \Lambda_0.  
\end{align*}
Strictly speaking, this is not a vector bundle on $X_{\leq \mu}$ since its fibers are not $\F_p$-modules. However, $\eF = \coker ( \eE \xrightarrow{\varpi} \eE )$ is a quasi-coherent sheaf on $X_{\leq \mu}$ and $\eG: \Lambda \mapsto \varpi^{-1} \Lambda / \Lambda$ is a vector bundle, making the rest of the discussion preceding \eqref{diagram: partial demazure resolution as strarified grassmannian bundle} valid; see \cite[\S 7-8]{BS16}.

\subsection{Trace map for perfect \texorpdfstring{$\GL_2$}{GL2} affine Grassmannian}\label{subsection: trace for perfect GL_2 affine grassmannian}
To convey the main idea, we now prove Theorem \ref{introtheorem: trace map for affine grassmannian} for the perfect $GL_2$ affine Grassmannian. This only needs equivariant proper excision and the projective bundle formula. Furthermore, proper excision gives a constructive recipe for computing the ring $K^T(X_{\leq \mu})$ integrally.

Let $\Gr$ be either the perfect affine Grassmannian or the Witt-vector affine Grassmannian.

\begin{theorem}\label{theorem: trace for GL_2 affine grassmannian}
Let $G_0 = \GL_2$ with its diagonal torus $T_0$ over $k$. Then for each affine Schubert variety $X_{\leq \mu}$ of $\Gr$, the trace
\begin{equation}\label{equation: trace iso for GL2}
\tr: K^T(X_{\leq \mu}, k) \to \RG(\Fix_{\frac{T}{T}}(X_{\leq \mu}), \O)    
\end{equation}
is an equivalence.
\end{theorem}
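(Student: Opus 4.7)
The plan is to induct on the dimension $d = (2\rho, \mu) = a - b$, where $\mu = (a, b) \in \X_*^+$. The base cases are immediate: for $d = 0$ the cocharacter $\mu$ is central and $X_{\leq \mu} = \pt$, so Lemma \ref{lemma: perfect equivariant point} applies; for $d = 1$ the cocharacter $\mu$ is minuscule and $X_{\leq \mu}$ is the partial flag variety $\GL_2 / P_\mu \cong \P^1$, so Example \ref{lemma: perfect trace for projective space} applies.

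For the inductive step $d \geq 2$, I would write $\mu = (1, 0) + \lambda$ with $\lambda = (a - 1, b)$, which remains dominant. The partial affine Demazure resolution \eqref{diagram: partial demazure resolution as strarified grassmannian bundle} provides maps $f \colon Y \to X_{\leq \mu}$ and $g \colon Y \to X_{\leq \lambda}$ with $Y = Y_{\leq ((1,0), \lambda)}$. For $\GL_2$, the convolution $g$ is an honest $T$-equivariant $\P^1$-bundle, since the relative Grassmannian of rank-$1$ quotients of a rank-$2$ equivariant vector bundle is a classical projective bundle. Combining Lemma \ref{lemma: stability on perfect projective bundles} with the induction hypothesis applied to $\lambda$ (whose Schubert variety has dimension $d - 1$) yields trace equivalence on $Y$.

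I would then form the $T$-equivariant perfect abstract blowup square
\begin{equation*}
\begin{tikzcd}
Y \arrow[d, "f"] \arrow[r, hookleftarrow] & E \arrow[d] \\
X_{\leq \mu} \arrow[r, hookleftarrow] & X_{<\mu}
\end{tikzcd}
\end{equation*}
with exceptional divisor $E = f^{-1}(X_{<\mu})$, and apply equivariant perfect proper excision (Theorem \ref{lemma: homotopy fiber square for K_T}) to both $K^T(-, k)$ and $HH^T(-, k) \simeq \RG(\Fix_{\frac{T}{T}}(-), \O)$, which are compatible with the trace transformation because both are localizing invariants. The induction hypothesis supplies trace equivalence on $X_{<\mu} = X_{\leq (a-1, b+1)}$, whose dimension is $d - 2$.

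The remaining and most delicate step, and the main obstacle, is to establish trace equivalence on the exceptional locus $E$. My plan is to identify $E \to X_{<\mu}$ as a $T$-equivariant $\P^1$-bundle, after which one last application of Lemma \ref{lemma: stability on perfect projective bundles} together with the induction hypothesis on $X_{<\mu}$ closes the argument. Concretely, every geometric point $\Lambda \in X_{<\mu}$ has invariant type $(a', b')$ with $b' \geq b + 1 \geq 1$, so the $t$-socle of the universal quotient $\Lambda_0/\Lambda$ is uniformly two-dimensional; the fiber of $f$ over $\Lambda$ is exactly the projectivization of this socle, and a short invariant-factor calculation (using $a - b \geq 2$) confirms that every resulting intermediate lattice $\Lambda_1$ lies in $X_{\leq \lambda}$. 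Globalising, $E$ is the projectivization of the rank-$2$ $T$-equivariant socle sub-bundle of the universal quotient sheaf on $X_{<\mu}$. Checking local freeness of this socle along all strata of $X_{<\mu}$, and the $X_{\leq \lambda}$-membership of each socle line, will be the hardest part of the proof; granted this geometric input, the induction closes via the pullback property of the proper excision square.
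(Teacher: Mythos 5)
Your proof is correct, and it takes a genuinely different route for the exceptional locus. Both you and the paper set up the same induction (on $a-b$, which is the same as the paper's induction on $a$ after the implicit center-shift $X_{\leq a\omega_1 + b\omega_2}\cong X_{\leq a\omega_1}$), and both invoke the same abstract blowup square
\begin{equation*}
\begin{tikzcd}
Y_{\leq \mu_\bullet} \arrow[d, "f"] \arrow[r, hookleftarrow] & E \arrow[d] \\
X_{\leq \mu} \arrow[r, hookleftarrow] & X_{<\mu}
\end{tikzcd}
\end{equation*}
with $Y_{\leq \mu_\bullet}$ a perfect $\P^1$-bundle over $X_{\leq \lambda}$ via $g$. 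The divergence is entirely in how $E$ is handled. The paper looks at $E$ through $g\colon E\to X_{\leq\lambda}$, observes that this is a section over the open cell $X_\lambda$ and the whole $\P^1$-bundle over $X_{\leq\lambda'}$, forms a second abstract blowup square $E/X_{\leq\lambda}$ with exceptional piece $E'/X_{\leq\lambda'}$, and applies proper excision once more. You instead look at $E$ through $f\colon E\to X_{<\mu}$ and identify it as a single $\P^1$-bundle, namely $\P(\eG|_{X_{<\mu}})_{\perf}$ where $\eG\colon\Lambda\mapsto t^{-1}\Lambda/\Lambda$ is the rank-$2$ equivariant bundle already appearing in the paper's setup; on $X_{<\mu}$ this agrees with the $t$-socle of $\Lambda_0/\Lambda$. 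Your argument therefore uses one level of proper excision where the paper uses two.

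The geometric input you flag as the hardest part is in fact easy, so you should be confident it closes. After shifting by the center so that $b=0$, every $\Lambda\in X_{<\mu}=X_{\leq(a-1,1)}$ has both elementary divisors $\geq 1$; hence $t^{-1}\Lambda\subset\Lambda_0$, so the socle of $\Lambda_0/\Lambda$ equals $t^{-1}\Lambda/\Lambda$ and is uniformly $2$-dimensional. Constant fiber dimension of $\ker(t\colon\eE\to\eE)$ on the reduced scheme $X_{<\mu}$ gives local freeness (equivalently, $\coker(t)$ is locally free of rank $2$ there, hence $\operatorname{im}(t)$ and then $\ker(t)$ are too). For the $X_{\leq\lambda}$-membership: any colength-$1$ overlattice $\Lambda_1\supset\Lambda$ satisfies $t\Lambda_1\subset\Lambda$, so $\Lambda_1\subset t^{-1}\Lambda\subset\Lambda_0$, and the interlacing of elementary divisors for a length-$1$ quotient forces $\Lambda_1$ to have invariant type dominated by $(a-1,0)=\lambda$. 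So $E=\P(\eG|_{X_{<\mu}})_{\perf}$ on the nose, and Lemma \ref{lemma: stability on perfect projective bundles} together with the induction hypothesis on $X_{<\mu}$ gives the trace equivalence for $E$.

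Both approaches are correct; yours is shorter and arguably more transparent, while the paper's route of iterating proper excision is closer to the pattern it later uses in explicit computations (Examples \ref{example: K-theory of adjoin GL_2}, \ref{example: K-theory of adjoint GL_3}) and makes no claim about $E$ beyond its stratification. Two small terminological slips worth fixing in your write-up: the \emph{convolution} map is $f$, not $g$ (you call $g$ the convolution), and the order of entries in $\mu_\bullet$ should match the paper's convention in Recollection on Demazure resolutions, where the minuscule piece comes first but $g$ projects to the tail $X_{\leq\lambda}$.
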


\begin{proof}
Note that $\X^{+}_{\bullet} = \N_{\geq 0} [\omega_1, \omega_2]$ is the free commutative monoid on the set of fundamental coweights. Each affine Schubert variety is isomorphic to one of the form $X_{\leq k\omega_1}$. Indeed, there is an equivariant isomorphism $X_{\leq(a\omega_1 + b\omega_2)} \cong X_{\leq a\omega_1}$. We prove the result by induction on $a$.

The base cases of the induction are $\mu = 0 \cdot \omega_1$ and $\mu = 1 \cdot \omega_1$. In the first case, $X_{\leq 0} = \pt$ so the trace map is indeed an isomorphism supported in homological degree zero by Lemma \ref{lemma: trace for point}. In the second case, $X_{\leq \omega_1}$ is the perfection of $\P^1$ with the natural $T$-action, so the trace map is also an isomorphism supported in homological degree zero by Example \ref{lemma: perfect trace for projective space}.

Let $a \geq 1$ and assume that we know the result for all $j \omega_1$ with $j \leq a$; we prove it for $ \mu := (a+1) \omega_1$. 
Denote $\mu' := (a-1) \omega_1 + \omega_2$ the maximal element strictly below $\mu$ in the dominance order.

Put $\mu_{\bullet} := (\lambda, \omega_1)$ and consider \eqref{diagram: partial demazure resolution as strarified grassmannian bundle}. This gives the affine Demazure variety $Y_{\leq \mu_{\bullet}}$ with a map $g: Y_{\leq \mu_{\bullet}} \to X_{\leq \lambda}$. We also have the affine Demazure morphism $f: Y_{\leq \mu_{\bullet}} \to X_{\leq \mu}$ which leads to an abstract blowup square
\begin{equation}\label{abs1 in GL2}
    \begin{tikzcd}
        Y_{\leq \mu_{\bullet}} \arrow[d, "f"] & \arrow[l] E \arrow[d, "f'"] \\
        X_{\leq \mu} & \arrow[l] X_{\leq \mu'}
    \end{tikzcd}
\end{equation}
\begin{itemize}
\item 
By construction, $Y_{\leq \mu_{\bullet}}$ is a perfect $\P^1$-bundle over $X_{\leq \lambda}$ via $g$.
Hence the claim holds for $Y_{\leq \mu_{\bullet}}$ by induction and Lemma \ref{lemma: stability on perfect projective bundles}.

\item 
Since $X_{\leq \mu'} \cong X_{\leq (a-1)\omega_1}$, we know the claim by induction. 

\item 
The exceptional part $E$ is a perfect $\P^1$-bundle over $X_{\leq \mu'}$ under $f'$. Indeed, it is the projectivization of the restriction of the quasi-coherent sheaf $\eF_{\leq \mu}$ to the subvariety $X_{\leq \mu'}$; this restriction is a rank two vector bundle on $X_{\leq \mu'}$ by the combinatorics of coweights in $GL_2$. Since we already know the claim for $X_{\leq \mu'}$, we deduce it for $E$ by Lemma \ref{lemma: stability on perfect projective bundles}.
\end{itemize}

Now consider the following fiber sequences coming from \eqref{abs1 in GL2} via Theorem \ref{lemma: homotopy fiber square for K_T}.
\begin{equation*}
    \begin{tikzcd}
       K^T(X_{\leq \mu}, k) \arrow[d] \arrow[r] & K^T(Y_{\leq \mu_{\bullet}}, k) \oplus K^T(X_{\leq \mu'}, k) \arrow[d] \arrow[r] & K^T(E, k) \arrow[d]  \\
       HH^T(X_{\leq \mu}, k) \arrow[r] & HH^T(Y_{\leq \mu_{\bullet}}, k) \oplus HH^T(X_{\leq \mu'}, k) \arrow[r] & HH^T(E, k)
    \end{tikzcd}
\end{equation*}
Combining the previous three items and the associated long exact sequences, we deduce by 5-lemma that $\tr: K^T(X_{\leq \mu}, k) \to HH^T(X_{\leq \mu}, k)$ is an isomorphism.
\end{proof}

\begin{corollary}
Taking the zeroth homotopy group in \eqref{equation: trace iso for GL2} returns the natural isomorphism
\begin{equation}
\tr: K^T_0(X_{\leq \mu}, k) \cong H_0(\Fix_{\frac{T}{T}}(X_{\leq \mu}), \O) \cong H_0(\Fix_{T}(X_{\leq \mu}), \O).
\end{equation}
\end{corollary}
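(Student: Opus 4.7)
The first isomorphism is immediate from Theorem \ref{theorem: trace for GL_2 affine grassmannian}: since the trace map \eqref{equation: trace iso for GL2} is an equivalence of spectra, taking $\pi_0$ yields $\tr: K^T_0(X_{\leq \mu}, k) \cong H_0(\Fix_{\frac{T}{T}}(X_{\leq \mu}), \O)$. Both sides are concentrated in homological degree zero: the right-hand side lives in non-positive degrees by \eqref{equation: perfect HH^T vanishes in positive degrees} together with the identification \eqref{equation: HH^T and fixed point schemes for perfect schemes}, and the equivalence propagates the vanishing to the left.

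For the second isomorphism, invoke \eqref{equation: torus invariant functions}, which identifies $H_0(\Fix_{\frac{T}{T}}(X), \O)$ with the $T$-invariant subring $H_0(\Fix_T(X), \O)^T$. It thus suffices to show that $T$ acts trivially on $H_0(\Fix_T(X_{\leq \mu}), \O)$. My plan is as follows. The structure map $\pi: \Fix_T(X_{\leq \mu}) \to T$ is proper by Proposition \ref{proposition: basic properties of fixed point families}, and the $T$-action on $\Fix_T(X_{\leq \mu})$, given by $(s, (t, x)) \mapsto (t, sx)$ (using that $T$ is abelian), covers the trivial $T$-action on the base $T$. Consequently, the coherent pushforward $\pi_* \O_{\Fix_T(X_{\leq \mu})}$ is a $T$-equivariant $\O_T$-module over the trivial base action, and weight-decomposes as a direct sum indexed by characters of $T$. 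Triviality of the $T$-action is equivalent to the vanishing of all non-zero weight summands, which I would check fiberwise: over a geometric point $t \in T$, the fiber is the reduced proper scheme $X_{\leq \mu}^t$, on which the connected group $T$ preserves each connected component, and each geometrically connected proper reduced component contributes a copy of $k$ to $H_0$ on which $T$ necessarily acts trivially.

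The main obstacle is thus to verify that every connected component of $X_{\leq \mu}^t$ is geometrically connected for all $t \in T$. For $\GL_2$ this can be done directly from the explicit description of affine Schubert varieties; alternatively, one can propagate the sharper statement through the same abstract-blowup induction that proves Theorem \ref{theorem: trace for GL_2 affine grassmannian}. The base cases $X_{\leq 0} = \pt$ and $X_{\leq \omega_1} = \P^1_{\perf}$ are immediate by inspection of the corresponding fixed-point schemes (cf.\ Lemma \ref{lemma: trace for point} and Example \ref{lemma: perfect trace for projective space}); in the inductive step, perfect proper excision (Theorem \ref{lemma: homotopy fiber square for K_T}) combined with the exactness of $T$-invariants on weight-zero summands preserves the validity of both isomorphisms at each stage of the Mayer--Vietoris long exact sequence.
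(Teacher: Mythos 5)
The paper states this corollary without giving a proof, treating both isomorphisms as immediate. Your argument is sound in outline but the last paragraph contains a genuine confusion that leaves the write-up unfinished.

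The first isomorphism is correct: take $\pi_0$ of the equivalence. (The aside about concentration in degree zero is not needed for a $\pi_0$-level statement and is only partially justified by the citations you give — \eqref{equation: perfect HH^T vanishes in positive degrees} and \eqref{equation: HH^T and fixed point schemes for perfect schemes} rule out positive degrees, not negative ones.)

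For the second isomorphism, the reduction via \eqref{equation: torus invariant functions} to showing that $T$ acts trivially on $H_0(\Fix_T(X), \O)$ is correct, as is the setup: $\pi$ is proper by Proposition \ref{proposition: basic properties of fixed point families}, the $T$-action covers the trivial action on the base, $\pi_*\O$ weight-decomposes, and triviality is equivalent to vanishing of all nonzero weight pieces, which can be checked on geometric fibers by perfect base change (as in Lemma \ref{lemma: perfect base change on fixed point schemes}). The problem is your final paragraph. You have already fixed $t$ to be a \emph{geometric} point of $T$, so the fiber $X_{\leq\mu}^{t}$ is a scheme over $\overline{k}$; its connected components are then automatically geometrically connected, because the base field is algebraically closed. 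The ``main obstacle'' you identify is therefore not an obstacle at all, and the proposal to verify geometric connectedness by hand for $\GL_2$ or propagate it through the abstract-blowup induction is superfluous. (If one did insist on $k$-points rather than geometric points, the conclusion would still hold for a cleaner reason: a connected group scheme $T$ acting on $\Spec L$ for $L/k$ a finite separable extension maps into the finite étale group scheme $\Aut_k(\Spec L)$, hence acts trivially.) You should also make explicit the step from ``nonzero weight summands of $\pi_*\O$ vanish on every geometric fiber'' to ``they vanish as coherent $\O_T$-modules'' — this is a coherence-plus-Nakayama observation which you leave tacit. With these two points repaired the argument closes, and it is the natural thing the paper has in mind; since the paper omits the proof, there is no official version to compare against.
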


The above also shows that both sides of \eqref{equation: trace iso for GL2} are supported in homological degrees $\leq 0$. We will later see that they are actually supported in the homological degree zero only.
\begin{corollary}
Both $K^T(X_{\leq \mu}, k)$ and $\RG(\Fix_{\frac{T}{T}}(X_{\leq \mu}), \O)$ are supported in homological degrees $\leq 0$. In particular, $K^T_0(X_{\leq \mu})$ has no $p$-torsion.
\end{corollary}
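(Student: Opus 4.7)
The plan is to bootstrap everything from the trace isomorphism of Theorem \ref{theorem: trace for GL_2 affine grassmannian} together with the vanishing properties of Hochschild homology established in the perfect setup.

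First, I would observe that the right-hand side of \eqref{equation: trace iso for GL2} is already known to be concentrated in non-positive homological degrees. Indeed, since $T$ is a perfect torus, the global quotient stack $T \backslash X_{\leq \mu}$ is flawless by Lemma \ref{lemma: perfectness of perfect torus quotients}, so the categorical and geometric models of Hochschild homology agree:
\begin{equation*}
HH^T_i(X_{\leq \mu}, k) = \RG_i(\Fix_{\frac{T}{T}}(X_{\leq \mu}), \O).
\end{equation*}
Then \eqref{equation: perfect HH^T vanishes in positive degrees} gives the vanishing $\RG_i(\Fix_{\frac{T}{T}}(X_{\leq \mu}), \O) = 0$ for $i \geq 1$. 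This takes care of one of the two claims about degree bounds.

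Next, I would invoke Theorem \ref{theorem: trace for GL_2 affine grassmannian} to transfer this vanishing to $K$-theory: since $\tr$ is an equivalence of spectra, $K^T_i(X_{\leq \mu}, k) = 0$ for $i \geq 1$ as well. Thus both sides of \eqref{equation: trace iso for GL2} are supported in homological degrees $\leq 0$. (Note that the vanishing $K^T_i(X_{\leq \mu}, k) = 0$ for $i \geq 2$ was already available unconditionally from Kratzer's argument, Corollary \ref{corollary: kratzers vanishing with k-coefficients}; the new input is the vanishing in degree $1$.)

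Finally, for the $p$-torsion claim, I would apply the universal coefficient theorem to the $k$-linearization:
\begin{equation*}
K^T_1(X_{\leq \mu}, k) = \bigl(K^T_1(X_{\leq \mu}) \otimes^0_{\Z} k\bigr) \oplus \bigl(K^T_0(X_{\leq \mu}) \otimes^1_{\Z} k\bigr).
\end{equation*}
Since the left-hand side vanishes by the previous step, the $\Tor$-summand $K^T_0(X_{\leq \mu}) \otimes^1_{\Z} k = \Tor^{\Z}_1(K^T_0(X_{\leq \mu}), \F_p)$ vanishes, which is precisely the statement that $K^T_0(X_{\leq \mu})$ has no $p$-torsion. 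There is no real obstacle here; the only subtlety is making sure one uses the flawlessness of $T \backslash X_{\leq \mu}$ (so the two models of Hochschild homology coincide) before quoting the positive-degree vanishing.
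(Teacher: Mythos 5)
Your proof is correct and essentially matches the paper's. The paper is slightly more direct: it observes that $\RG(\Fix_{\frac{T}{T}}(X_{\leq\mu}),\O)=\RG(\Fix_T(X_{\leq\mu}),\O)^T$ by exactness of $(-)^T$ (equation \eqref{equation: torus invariant functions}), and since $\Fix_T(X_{\leq\mu})$ is a perfect \emph{scheme}, its coherent cohomology lives in cohomological degrees $\geq 0$, i.e.\ homological degrees $\leq 0$. This gives the positive-degree vanishing of the right-hand side without ever invoking the Hochschild homology identification; your route through flawlessness (Lemma \ref{lemma: perfectness of perfect torus quotients}) and \eqref{equation: perfect HH^T vanishes in positive degrees} reaches the same fact but is a detour. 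In particular, the ``only subtlety'' you flag at the end—needing flawlessness so the two models of $HH$ coincide—is actually not needed for this corollary at all: the claim is about $\RG(\Fix_{\frac{T}{T}},\O)$ directly, and its vanishing in positive homological degrees is a purely sheaf-theoretic observation about schemes plus exactness of $T$-invariants. The remaining steps (transfer via the trace equivalence, universal coefficient theorem combined with Kratzer's vanishing $K^T_1(X,k)=K^T_0(X)\otimes^1_{\Z}k$ to deduce absence of $p$-torsion) are exactly as in the paper.
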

\begin{proof}
Note that $(-)^T$ on functions is exact, so the right-hand side of \eqref{equation: trace iso for GL2} is supported in homological degrees $\leq 0$. Since $\tr$ is an isomorphism, the same is true for the left-hand side.   
The final line follows from $K^T_1(X, k) = K^T_0(X) {\otimes}_{\Z}^1 k$.
\end{proof}

\begin{theorem}
Integrally, it holds that
$$K^T(X_{\leq \mu}) \simeq KH^T(X_{\leq \mu}).$$
\end{theorem}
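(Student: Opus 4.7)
The plan is to mirror the inductive dévissage from the proof of Theorem~\ref{theorem: trace for GL_2 affine grassmannian}, but with the pair $(K^T(-,k), HH^T(-,k))$ and the trace map replaced throughout by the pair $(K^T(-), KH^T(-))$ and the natural comparison map $K^T(-)\to KH^T(-)$. All three ingredients used there carry over integrally: equivariant perfect proper excision holds for $K^T$ by Theorem~\ref{lemma: homotopy fiber square for K_T} and for $KH^T$ by Lemma~\ref{lemma: homotopy fiber square for KH}; the projective bundle formula holds for both by Proposition~\ref{proposition: projective bundle formula for localizing invariants}; and the comparison map is a natural transformation of localizing invariants, so it is automatically compatible with both decompositions.

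The base case is smoothness: on any perfectly smooth $T$-scheme, $K^T(-)\to KH^T(-)$ is an equivalence by homotopy invariance applied in the regular case, as noted in the remark preceding Theorem~\ref{observation: K = KH for perfect affine grassmannian}. This handles $X_{\leq 0}=\pt$ and $X_{\leq \omega_1}=(\P^1)_{\perf}$, and more importantly each affine Demazure variety $Y_{\leq \mu_\bullet}$ appearing in the dévissage for $\GL_2$, since any such $Y_{\leq \mu_\bullet}$ for a minuscule expression $\mu_\bullet$ is an iterated perfect $\P^1$-bundle over a point and hence perfectly smooth.

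The inductive step is then a verbatim transcription of the proof of Theorem~\ref{theorem: trace for GL_2 affine grassmannian}. Given $\mu = (a+1)\omega_1$, one picks $\mu_\bullet = (\lambda, \omega_1)$ with $\lambda = a\omega_1$ and forms the abstract blowup square \eqref{abs1 in GL2}. The claim for $Y_{\leq\mu_\bullet}$ follows from smoothness; the claim for the exceptional divisor $E$ is obtained via the secondary abstract blowup \eqref{abs2 in GL2}, using the inductive hypothesis on $X_{\leq \lambda}$, $X_{\leq \lambda'}$ and the smoothness of $E'\to X_{\leq \lambda'}$ (a perfect $\P^1$-bundle), together with proper excision and the $5$-lemma; and the claim for $X_{\leq \mu'}$ is part of the induction. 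Applying equivariant perfect proper excision for both $K^T$ and $KH^T$ to \eqref{abs1 in GL2} and comparing the two resulting long exact sequences of homotopy groups via the $5$-lemma yields the desired equivalence $K^T(X_{\leq\mu})\simeq KH^T(X_{\leq\mu})$.

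No substantial obstacle arises beyond what was already handled in the proof of Theorem~\ref{theorem: trace for GL_2 affine grassmannian}: the only nontriviality is the dévissage bookkeeping, which is identical in both cases. The integral (as opposed to $k$-linearized) nature of the statement causes no trouble because proper excision, the projective bundle formula, and the regular case $K^T=KH^T$ all hold integrally.
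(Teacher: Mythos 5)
Your overall strategy is the same as the paper's: transport the dévissage of Theorem~\ref{theorem: trace for GL_2 affine grassmannian} to the pair $(K^T, KH^T)$ using equivariant perfect proper excision for both, the projective bundle formula, and the base case $K^T\simeq KH^T$ on perfectly smooth $T$-schemes. However, there is an error in how you invoke the base case. You claim that ``the claim for $Y_{\leq\mu_\bullet}$ follows from smoothness,'' on the grounds that $Y_{\leq\mu_\bullet}$ is an iterated perfect $\P^1$-bundle over a point. That is true for the \emph{full} affine Demazure variety with $\mu_\bullet=(\omega_1,\ldots,\omega_1)$, but the dévissage in Theorem~\ref{theorem: trace for GL_2 affine grassmannian} uses the \emph{partial} resolution with $\mu_\bullet=(\lambda,\omega_1)$, $\lambda=a\omega_1$, for which $Y_{\leq\mu_\bullet}$ is a perfect $\P^1$-bundle over $X_{\leq\lambda}$ via $g$. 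Since $X_{\leq\lambda}$ is singular for $a\geq 2$, so is $Y_{\leq\mu_\bullet}$; it is not perfectly smooth, and the base case does not apply to it directly. (The same caveat applies to $E'$: it is a $\P^1$-bundle over the singular $X_{\leq\lambda'}$.) You cannot instead switch to the full minuscule Demazure tower without also changing the description of the exceptional locus $E$ -- the secondary abstract blowup square \eqref{abs2 in GL2} you use is specific to the two-step partial resolution.

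The fix is exactly the mechanism the paper uses, and it is already among the ingredients you list: do not appeal to smoothness of $Y_{\leq\mu_\bullet}$ or $E'$, but instead to the inductive hypothesis on $X_{\leq\lambda}$ and $X_{\leq\lambda'}$ together with the integral projective bundle formula (Proposition~\ref{proposition: projective bundle formula for localizing invariants}, applied to the natural transformation $K^T\to KH^T$ as in Corollary~\ref{corollary: classical trace and projective bundles}). With that correction, the rest of your argument -- the two abstract blowup squares, proper excision for $K^T$ and $KH^T$, and the $5$-lemma -- goes through and recovers the paper's proof.
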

\begin{proof}
The proof of Theorem \ref{theorem: trace for GL_2 affine grassmannian} in particular shows how to describe $K^T(X_{\leq \mu})$ from $K^T(-)$ of smooth $T$-equivariant schemes via homotopy fiber sequences associated to perfect abstract blowups. The same sequences exist for $KH^T(-)$ by Proposition  \ref{lemma: homotopy fiber square for KH}, and there is a natural comparison map $K^T(-) \to KH^T(-)$. This is an isomorphism on smooth $T$-equivariant pfp schemes, so the result follows by induction.
\end{proof}

\subsection{Trace map for perfect \texorpdfstring{$\GL_n$}{GLn} affine Grassmannian}\label{section: trace for GL_n affine grassmannian}
In general, it is demanding to control the $K$-theory of the exceptional fibers by proper excision only. The extra ingredient is the following: we also get split homotopy fiber sequences associated to stratified Grassmannian bundles $Y \to X$ by the semi-orthogonal decomposition of \cite{Jia23}. With this extra knowledge, we prove the general case of Theorem \ref{introtheorem: trace map for affine grassmannian}.
Let $\Gr$ be either the perfect affine Grassmannian or the Witt-vector affine Grassmannian for $\GL_n$.
\begin{theorem}\label{theorem: trace for GL_n affine grassmannian}
Let $\mu \in \X^{+}_{\bullet}$ and $X_{\leq \mu}$ the corresponding perfect affine Schubert variety in $\Gr$. Then
\begin{equation}\label{equation: trace map for GLn affine schubert varieties}
    \tr: K^T(X_{\leq \mu}, k) \to \RG(\Fix_{\frac{T}{T}}(X_{\leq \mu}), \O)
\end{equation}
is an equivalence supported in homological degree zero.
\end{theorem}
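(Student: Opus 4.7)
We induct on the minuscule length $\lg(\mu)$. The base case $\mu = 0$ gives $X_{\leq 0} = \pt$, handled by Lemma \ref{lemma: perfect equivariant point}. For the inductive step, a short combinatorial argument (after a harmless central twist by $\omega_n$ we may assume all $m_i \geq 0$ with $m_1 > 0$) produces a minuscule fundamental coweight $\omega_j$ such that $\lambda := \mu - \omega_j$ is dominant, with $\lg(\lambda) = \lg(\mu) - 1$: take $j$ to be the largest index with $m_j > m_{j+1}$. Fix a minuscule decomposition $\lambda_\bullet$ of $\lambda$ and set $\mu_\bullet := (\omega_j, \lambda_\bullet)$. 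The associated affine Demazure variety $Y := Y_{\leq \mu_\bullet}$ sits in diagram \eqref{diagram: partial demazure resolution as strarified grassmannian bundle}: the Demazure map $f \colon Y \to X_{\leq \mu}$ realises $Y$ as the perfect derived Grassmannian $\Grass_{X_{\leq \mu}}(\eE_\bullet, \omega_j)_{\perf}$, while $g \colon Y \to X_{\leq \lambda}$ is an honest perfect partial flag bundle of the vector bundle $\eG$.

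By the inductive hypothesis, the trace is a degree-zero equivalence on $X_{\leq \lambda}$. Since $g$ is an honest flag bundle, Lemma \ref{lemma: perfect grassmannian bundle formula} lifts this to a degree-zero trace equivalence on $Y$. To descend along $f$, we exploit that the affine Schubert stratification of $X_{\leq \mu}$ is a flattening stratification for $\eH_0(\eE_\bullet)$; Jiang's semi-orthogonal decomposition for derived Grassmannians of perfect complexes of Tor-amplitude $[1,0]$ then makes $\Perf^T(X_{\leq \mu})$ a direct summand of $\Perf^T(Y)$ via $f^*$. Because the trace is a natural transformation of localizing invariants, this summand structure transports to compatible splittings of $K^T(-, k)$ and $HH^T(-, k)$; the trace equivalence on $Y$ restricts to the required equivalence on $X_{\leq \mu}$, with degree-zero support inherited automatically.

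The principal obstacle is that Proposition \ref{proposition: K-theory and derived grassmannians} is stated only for rank-one derived Grassmannians, while $\omega_j$ may have $j > 1$. I would address this by factoring $f$ through the perfect derived full flag $F := \Flag_{X_{\leq \mu}}(\eE_\bullet, 1, 2, \dots, j)_{\perf}$. The composite $h \colon F \to X_{\leq \mu}$ is an iterated tower of rank-one derived Grassmannians (at each stage the universal kernel inherits Tor-amplitude $[1, 0]$, since the tautological quotient is surjective on the relevant $H^0$), so iterated application of the rank-one case yields a retraction $T \colon \Perf^T(F) \to \Perf^T(X_{\leq \mu})$ of $h^*$. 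The projection $p \colon F \to Y$ is an honest flag bundle of the universal rank-$j$ subbundle on $Y$. Setting $R := T \circ p^* \colon \Perf^T(Y) \to \Perf^T(X_{\leq \mu})$ yields $R \circ f^* = T \circ (p^* \circ f^*) = T \circ h^* = \id$, so $f^*$ admits a retraction and exhibits $\Perf^T(X_{\leq \mu})$ as a direct summand of $\Perf^T(Y)$ at the level of all localizing invariants, completing the inductive step.
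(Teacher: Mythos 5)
Your proposal is correct and takes essentially the same approach as the paper: the same induction on $\lg(\mu)$, the same reduction to $X_{\leq \lambda}$ via the partial Demazure variety $Y$, and the same key trick of replacing the rank-$j$ derived Grassmannian map $f$ by its factorization through a perfect derived full-flag variety (your $F$, the paper's $\widetilde{Y}_{\leq \mu_\bullet}$), which is an iterated rank-one derived Grassmannian bundle over $X_{\leq\mu}$ and an honest flag bundle over $Y$. The only cosmetic difference is that the paper applies the stratified-Grassmannian lemma directly to the composite $fh\colon \widetilde{Y}\to X_{\leq\mu}$ and uses the trace equivalence on $\widetilde{Y}$, whereas you package the same semi-orthogonal data into an explicit retraction $R = T\circ p^*$ of $f^*$ and work with the trace equivalence on $Y$; both are valid.
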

\begin{proof}
We prove the theorem by induction on $\mu \in \X^{+}_{\bullet}$ with respect to $\lg(\mu)$. If $\mu = 0$, then $X_{\leq \mu} = \pt$ and the statement is clear by Lemma \ref{lemma: perfect equivariant point}.

Assume now $\mu \neq 0$ and decompose it as $\mu = b\omega_n + \mu_1 + \dots + \mu_{d}$ with $d \geq 1$ and each $\mu_i$ among $\omega_1^*, \dots, \omega^*_{n-1}$. Say $\mu_d = \omega^*_j$. Let $\lambda = \mu - \mu_d$ and put $\mu_{\bullet} = (\lambda, \mu_d)$.
Consider the following diagram building on \eqref{diagram: partial demazure resolution as strarified grassmannian bundle}:
\begin{equation}
    \begin{tikzcd}
       \widetilde{Y}_{\leq \mu_{\bullet}} \arrow[r, "h"] \arrow[dr] & Y_{\leq \mu_{\bullet}} \arrow[r, "g"] \arrow[d, "f"] & X_{\leq \lambda} \\
       & X_{\leq \mu}
    \end{tikzcd}
\end{equation}
Here $\widetilde{Y}_{\leq \mu_{\bullet}}$ parametrizes the data of $Y_{\leq \mu_{\bullet}}$ together with a full flag in the rank $j$ quotient corresponding to $\mu_d$. The map $h$ is thus a perfected full flag variety bundle (for the group $\GL_{j}$).

Now, the map $fh$ is an iterated perfected relative $\Grass(-, 1)$-bundle.
Indeed, having $\eF = \eH_0(\eE \to \eE)$ on $X_{\leq \mu}$, first take $\Grass_{X_{\leq \mu}}(\eF, 1)$. The tautological exact sequence on $\Grass_{X_{\leq \mu}}(\eF, 1)$ then reads as
$$0 \to \eF' \to f^*\eF \to \eF'' \to 0$$
and we can continue the process with $\eF'$. After $j$ steps we arrive at $\widetilde{Y}_{\leq \mu_{\bullet}}$. Note that although these $\Grass(-, 1)$-bundles may carry a nontrivial derived structure, their perfections are classical by \hyperlink{property: classical}{(vi)}.

We already know the statement of the theorem holds for $X_{\leq \lambda}$ by induction.
By stability on perfected partial flag variety bundles from Lemma \ref{lemma: perfect grassmannian bundle formula}, we deduce it for $Y_{\leq \mu_{\bullet}}$ and subsequently for $\widetilde{Y}_{\leq \mu_{\bullet}}$ as well.

Applying Lemma \ref{lemma: K-theory of perfect stratified grassmannian bundles} to $fh$, the trace map $K^T(X_{\leq \mu}, k) \to HH^T(X_{\leq \mu}, k)$ is a direct summand of the trace map $K^T(\widetilde{Y}_{\leq \mu_{\bullet}}, k) \to HH^T(\widetilde{Y}_{\leq \mu_{\bullet}}, k)$, which is an isomorphism supported in homological degree zero by the above. We deduce the same for the former, completing the inductive step.
\end{proof}

\begin{remark}
Note that at least in the equal characteristic case, $\eE_{\bullet}$ gives a preferred presentation of $\eF$ with Tor-amplitude $[1, 0]$, simplifying the application of Lemma \ref{lemma: K-theory of perfect stratified grassmannian bundles}. 
\end{remark}

\begin{theorem}\label{observation: K = KH for perfect affine grassmannian}
Integrally, the natural map is an equivalence
$$K^T(X_{\leq \mu}) \simeq KH^T(X_{\leq \mu}).$$
\end{theorem}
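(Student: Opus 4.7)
My strategy is to run the same inductive argument as in Theorem \ref{theorem: trace for GL_n affine grassmannian}, but applied to the natural transformation of localizing invariants $\tau \colon K^T(-) \to KH^T(-)$ in place of the trace map $\tr$. Both sides descend from the universal localizing invariant, so any structural result phrased for an arbitrary natural map of localizing invariants may be transported from the trace-map setting to this comparison.

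I induct on $\lg(\mu)$. For the base case $\mu = 0$, the variety $X_{\leq 0} = \pt$ is smooth, and $K^T \to KH^T$ is an equivalence on smooth $T$-equivariant pfp schemes (as recorded in the remark following Conjecture \ref{conjecture: perfect K is perfect KH}), so $\tau$ is an equivalence in this case.

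For the inductive step, I decompose $\mu = \mu_1 + \lambda$ with $\mu_1 = \omega_j$ a fundamental coweight, and reuse the diagram
\begin{equation*}
    \widetilde{Y}_{\leq \mu_\bullet} \xrightarrow{h} Y_{\leq \mu_\bullet} \xrightarrow{g} X_{\leq \lambda}
\end{equation*}
from the proof of Theorem \ref{theorem: trace for GL_n affine grassmannian}: here $g$ is a perfect Grassmannian bundle, $h$ is a perfect full flag variety bundle, and $fh \colon \widetilde{Y}_{\leq \mu_\bullet} \to X_{\leq \mu}$ factors as an iterated perfect $\Grass(-,1)$-bundle. By induction, $\tau$ is an equivalence on $X_{\leq \lambda}$. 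Since $K^T$ and $KH^T$ are both localizing invariants, Lemma \ref{lemma: perfect grassmannian bundle formula} --- whose proof only uses the projective bundle formula from Proposition \ref{proposition: projective bundle formula for localizing invariants} --- applies verbatim with $\tau$ in place of $\tr$, so $\tau$ is an equivalence on $Y_{\leq \mu_\bullet}$ and subsequently on $\widetilde{Y}_{\leq \mu_\bullet}$.

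Finally I would invoke Lemma \ref{lemma: K-theory of perfect stratified grassmannian bundles}, which is explicitly formulated for any localizing invariant and any natural map between them. It shows that $\tau$ on $X_{\leq \mu}$ is a natural direct summand of $\tau$ on $\widetilde{Y}_{\leq \mu_\bullet}$; since the latter is an equivalence, so is the former. The only step requiring care is verifying that the semi-orthogonal decompositions underlying Lemma \ref{lemma: K-theory of perfect stratified grassmannian bundles} (coming from Jiang's results \cite{Jia23} on derived Grassmannians, applied before perfection and transported through the Frobenius colimit) yield decompositions of $K^T$ and $KH^T$ that are functorially compatible with $\tau$. This compatibility is automatic from Recollection \ref{recollection: semiorthogonal decompositions and localizing invariants} once both $K^T$ and $KH^T$ are recognized as localizing invariants in the sense of \cite{BGT13, HSS17}, so no new ingredient beyond the proof of Theorem \ref{theorem: trace for GL_n affine grassmannian} is needed.
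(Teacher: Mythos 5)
Your proof is correct and is essentially the same as the paper's, which simply says that the inductive argument from Theorem \ref{theorem: trace for GL_n affine grassmannian} applies verbatim to the natural map $K^T(-) \to KH^T(-)$, once one notes that both are localizing invariants agreeing on smooth $T$-equivariant pfp schemes; you have merely spelled out the details of that induction.
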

\begin{proof}
Both $K(-)$ and $KH(-)$ are localizing invariants and there is a natural map $K(-) \to KH(-)$. Note that $K^T(-)$ and $KH^T(-)$ agree on smooth $T$-equivariant schemes (in particular on a point). The inductive argument from the proof of Theorem \ref{theorem: trace for GL_n affine grassmannian} thus applies.
\end{proof}
\section{Sample computations in affine Grassmannians}\label{section: some examples}

The proof of Theorem \ref{theorem: trace for GL_2 affine grassmannian} is constructive: it yields an inductive way to compute the ring $K^T_0(X_{\leq \mu})$ integrally in terms of generators and relations. We provide a few concrete instances of this computation in practice. 

\begin{example}\label{example: K-theory of adjoin GL_2}
Assume $k$ has $\chara k \geq 3$. Let $G_0 = GL_2$ and $T_0$ its diagonal torus. Let $\mu = 2 \omega_1$ and $X = \Gr_{\leq \mu}$. We get
\begin{equation*}
    K^T_0(X, k) \cong \left( k[t_1^{\pm 1}, t_2^{\pm 1}, e_1] / \left( (e_1 - 2t_1)(e_1 - 2t_2)(e_1 -(t_1 + t_2)) \right) \right)_{\perf}.
\end{equation*}
\end{example}
\begin{proof}
Let $\mu_{\bullet} = (\omega_1, \omega_1)$ and $Y= \Gr_{\leq \mu_{\bullet}}$. The associated abstract blowup square induces a long exact sequence in $K$-theory, which splits into short exact sequences by the description of the $K$-theory of projective bundles used for $Y \to E$. We thus get a Mayer--Vietoris square
\begin{center}
\begin{tikzcd}
    K^T_0(Y, k) \arrow[r] & K^T_0(E, k) \\
    K^T_0(X, k) \arrow[u] \arrow[r] & K^T_0(\pt, k) \arrow[u]
\end{tikzcd}
\end{center}
and we can compute $K^T_0(X, k)$ from the associated short exact sequence as a kernel.
In particular $K^T_0(X, k) = K^T_0(X) {\otimes_{\Z}}k$.
We can explicitly write:
\begin{align*}
    & K^T_0(\pt, k) \cong \left( k[t_1^{\pm 1}, t_2^{\pm 1}] \right)_{\perf} \\
    & K^T_0(E, k) \cong \left( k[t_1^{\pm 1}, t_2^{\pm 1}, \xi] / ((\xi -  t_1)(\xi -  t_2)) \right)_{\perf} \\
    & K^T_0(Y, k) \cong \left( k[t_1^{\pm 1}, t_2^{\pm 1}, \xi_0, \xi_1] / \left( \substack{ (\xi_0 -  t_1)(\xi_0 -  t_2) \\ (\xi_1 -  t_1)(\xi_1 -  t_2)} \right) \right)_{\perf}
\end{align*}
The map $K^T_0(Y, k) \to K^T_0(E, k)$ is given by $\xi_0 \mapsto \xi$, $\xi_1 \mapsto (-\xi + t_1 + t_2)$.
The above Mayer--Vietoris short exact sequence then shows $K^T_0(X, k) \hookrightarrow K^T_0(Y, k)$ is the subring generated by 
\begin{align*}
   & e_1 := \xi_0 + \xi_1 \\
   & e_2 := \xi_0 \xi_1
\end{align*}
The computation of this kernel can be done before perfection by exactness of filtered colimits.
Explicitly, $K^T_0(X, k)$ is given by the rank three flat $K^T_0(\pt, k)$-algebra
\begin{align*}
K^T_0(X, k) 
&\cong \left( k[t_1^{\pm 1}, t_2^{\pm 1}, e_1, e_2] \ / \left(\substack{ e_1^2 - (t_1+t_2)e_1 - 2e_2 +2t_1t_2 \\ 
                       e_2^2 - (t_1+t_2)^2e_2 + t_1t_2(t_1+t_2)e_1 - t_1^2t_2^2  \\
                       e_1 e_2 - 2(t_1+t_2)e_2 + t_1t_2e_1 }\right) \right)_{\perf} \\ 
&\cong \left(k[t_1^{\pm 1}, t_2^{\pm 1}, e_1] \ / \left(e_1^3 - 3(t_1 + t_2)e_1^2 + (4t_1t_2 + 2(t_1 + t_2)^2)e_1 - 4(t_1 + t_2)t_1t_2  \right) \right)_{\perf} \\
&\cong \left(k[t_1^{\pm 1}, t_2^{\pm 1}, e_1] \ / \left( (e_1 - 2t_1)(e_1 - 2t_2)(e_1 - (t_1 + t_2)) \right) \right)_{\perf}.
\end{align*}

The first line holds by checking these relations for $e_1$ and $e_2$; these are all of them by counting ranks in the Mayer--Vietoris sequence. To get the second line, we get rid of $e_2$ from the first relation (this is the only point where we use $\chara k \neq 2$). Substituting for $e_2$ into the third relation, we obtain the desired cubic relation for $e_1$. Substituting for $e_2$ into the second relation and using the above cubic relation for $e_1$, the second relation becomes tautological. The passage to the third line is clear.
\end{proof}

\begin{example}\label{example: K-theory of adjoint GL_3}
    Let $k$ be of $\chara k \neq 3$.  Let $G_0 = GL_3$ and $T_0$ its diagonal torus. Let $\mu = \omega_2 + \omega_1$ and $X \cong \Gr_{\leq \mu}$. We get
\begin{equation*}
    K^T_0(X, k) \cong \left( k[t_1^{\pm 1}, t_2^{\pm 1}, t_3^{\pm 1}, m_1, m_2] \ / 
    \left(
    \substack{
        3m_1^5 + (15c_2-5c_1^2)m_1^3 + (2c_1^3 - 9c_1c_2 + 9c_3)m_2 + (14c_2^2 - 4c_1^2c_2 - 6c_1c_3)m_1 \\
        3m^2_1m_2 - 2c_1m_1^3 + (3c_2 - c_1^2)m_2 + (c_1c_2 - 9c_3)m_1 \\
        3m_2^2 + m_1^4 - 4c_1m_1m_2 + 4c_2m_1^2
    }  
    \right) 
    \right)_{\perf}
\end{equation*}
where we denote elementary symmetric polynomials in $t_1, t_2, t_3$ by
\begin{equation*}
    c_1 := t_1 + t_2 + t_3, \qquad
    c_2 := t_1t_2 + t_2t_3 + t_1t_3, \qquad
    c_3 := t_1t_2t_3.
\end{equation*}
This is rank $7$ finite free algebra over $K^T_0(\pt, k)$.
\end{example}
\begin{proof}
Let $\mu_{\bullet} = (\omega_2, \omega_1)$ and $Y = \Gr_{\leq \mu_{\bullet}}$. The associated abstract blowup square has $Y$ a perfected $\P^2$-bundle over $\P^2$, $Z = \pt$ and $E$ perfect $\P^2$ as a section. One immediately gets
\begin{align*}
    K^T_0(Y, k) &\cong \left( k[t_1^{\pm 1}, t_2^{\pm 1}, t_3^{\pm 1}, \chi_0, \chi_1] \ / 
    \left(
    \substack{
    (\chi_0 - t_1)(\chi_0 - t_2)(\chi_0 - t_3) \\
    (\chi_1 - t_1)(\chi_1 - t_2)(\chi_1 - t_3)
    }  
    \right) 
    \right)_{\perf}\\
    K^T_0(Z, k) &\cong\left( k[t_1^{\pm 1}, t_2^{\pm 1}, t_3^{\pm 1}]\right)_{\perf} \\
    K^T_0(E, k) &\cong \left( k[t_1^{\pm 1}, t_2^{\pm 1}, t_3^{\pm 1}, \chi] \ / \left( (\chi - t_1)(\chi - t_2)(\chi - t_3) \right) \right)_{\perf}
\end{align*}
with the map $K^T_0(Y, k) \to K^T_0(E, k)$ given by $\chi_0 \mapsto \chi$, $\chi_1 \mapsto \chi$.
Since the induced long exact sequence in $K$-theory splits into short exact sequences (by the shape of $Y$ and $E$), we can explicitly compute $K^T_0(X, k)$ from the short exact sequence as a kernel.
Moreover, since the remaining terms are projective $K^T_0(\pt, k)$-modules of ranks $9, 1, 3$, it follows that $K^T_0(X, k)$ is a projective $K^T_0(\pt, k)$-module of rank $7$. The elements
\begin{align*}
    m_1 := \chi_0 - \chi_1 \\
    m_2 := \chi_0^2 - \chi_1^2
\end{align*}
are visibly in this kernel. A lengthy computation shows that they satisfy precisely the above ideal of relations inside $K^T_0(Y, k)$. For rank reasons, it follows that they generate $K^T_0(X, k)$ as an algebra.
\end{proof}

\begin{remark}
The above computations work the same for $KH^T(-)$ and $KH^G(-)$ in any characteristic (without perfection). In particular, this applies to characteristic zero.   
\end{remark}

\begin{remark}
The presentations from Examples \ref{example: K-theory of adjoin GL_2} and \ref{example: K-theory of adjoint GL_3} can be compared to their cohomological analogs studied in \cite{Hau24} by different methods: see \cite[(4.6)]{Hau24} and \cite[(4.10)]{Hau24}.
\end{remark}

We want to briefly mention how the trace map looks for affine Demazure resolutions, as the above rings for $X_{\leq \mu}$ embed into those for $Y_{\leq \mu_{\bullet}}$.
\begin{example}\label{example: trace for perfect affine demazure resolutions}
Let $\mu_{\bullet} = (\mu_1, \dots, \mu_d)$ be a sequence of fundamental coweights and $Y_{\leq \mu_{\bullet}}$ the corresponding affine Demazure resolution. Then 
\begin{equation*}\label{equation: trace iso for affine Demazure resolutions}
\tr: K^T(Y_{\leq \mu_{\bullet}}, k) \xrightarrow{\simeq} \RG(\Fix_{\frac{T}{T}}(Y_{\leq \mu_{\bullet}}), \O)    
\end{equation*}
is an equivalence supported in homological degree zero. Explicitly, 
\begin{align*}
K_0^T(Y_{\leq \mu_{\bullet}}) = & K^T_0(G/P_{\mu_1}) \underset{K^T_0(\pt)}{\otimes} K^T_0(G/P_{\mu_2}) \underset{K^T_0(\pt)}{\otimes} \dots \underset{K^T_0(\pt)}{\otimes} K^T_0(G/P_{\mu_k})
\end{align*}
\end{example}
\begin{proof}
This follows from Lemma \ref{lemma: perfect equivariant point} by iterating Lemma \ref{lemma: perfect grassmannian bundle formula}.

Alternatively, the trace map in degree zero for affine Demazure resolutions can be understood directly from the twisted product description by Lemma \ref{lemma: fixed point schemes of twisted products} and Lemma \ref{lemma: fiberwise criterion}; these arguments work $G$-equivariantly.
\end{proof}
\section{More examples}\label{section: more examples}
We now record a few more examples to convey a feeling about perfect trace maps. These examples are interesting and relate to other results in the literature. However, since they are rather unrelated to our original motivation, we confine them to this section.

\subsection{Trivial group}
If $G = e$ is the trivial group over $k$ and $X$ is perfectly proper, the trace map in degree zero is the following isomorphism encoding simple topological information.
\begin{observation}\label{lemma: perfect trace map for trivial group}
Assume $X$ is perfectly proper over $k$. Then the trace map is an isomorphism
\begin{equation*}
        \tr_X : K_0(X)_k \xrightarrow{\cong} H_0(\Fix_e(X), \O).
\end{equation*}
Both sides agree with the free $k$-module $k[\pi_0(X)]$ on the set of connected components of $X$. For $i \geq 1$, both $K_i(X, k)$ and $H_i(\Fix_e(X), \O)$ are zero.
\end{observation}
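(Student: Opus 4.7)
Since $G=e$ acts trivially, $\Fix_e(X)=X$, so $H_i(\Fix_e(X),\O) = H^{-i}(X,\O_X)$: this vanishes automatically for $i\ge 1$ as coherent cohomology has no negative degrees, while for $i=0$ perfect properness combined with $X$ being perfect (hence reduced by \hyperlink{property: reduced}{(iii)}) makes $H^0(X,\O_X)$ a finite reduced perfect $k$-algebra. Such an algebra decomposes as a finite product of finite perfect field extensions of $k$; under the natural identification, this is the free $k$-module $k[\pi_0(X)]$. The trace map sends a perfect complex $\eE$ to the fiberwise rank function, a locally constant integer-valued function, and lands in this algebra. So the geometric side is understood immediately, and it remains to compute the $K$-theory side and match it with $k[\pi_0(X)]$ via the rank.

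To compute the $K$-theory side, fix a finitely presented model $X_0$ of $X$. Lemma \ref{proposition: K-theory of inverse limit} gives $K(X) = \colim_{\varphi^*} K(X_0)$, and Remark \ref{remark: perfect k-theory and adams operations} identifies $\varphi^*$ with the $p$-th Adams operation $\psi^p$. Corollary \ref{corollary: kratzers vanishing with k-coefficients} immediately gives $K_i(X,k)=0$ for $i\ge 2$, and reduces the vanishing of $K_1(X,k)$ to the absence of $p$-torsion in $K_0(X)$ via universal coefficients.

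The key computation is the action of $\psi^p$ on $K_0(X_0)$ via the classical $\gamma$-filtration $F^\bullet$, which is finite on a noetherian scheme of finite Krull dimension. A classical result of Soul\'e/Kratzer shows that $\psi^p$ acts as multiplication by $p^i$ on $\operatorname{gr}^i_F$. Hence $\psi^p$ acts as the identity on $F^0/F^1 = \Z^{\pi_0(X_0)}$ and is nilpotent modulo $p$ on $F^1 K_0(X_0)$. Since filtered colimits are exact, tensoring with $\F_p$ and taking $\colim_{\psi^p}$ annihilates the $F^1$ part, yielding $K_0(X,k) = \F_p^{\pi_0(X_0)} = k[\pi_0(X)]$ (using that perfection is a universal homeomorphism, so $\pi_0(X_0)=\pi_0(X)$). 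Integrally, $\colim_{\psi^p} F^1 K_0(X_0)$ is a $\Z[\tfrac{1}{p}]$-module since multiplication by $p^i$ becomes invertible in the colimit for $i\ge 1$, so $K_0(X)[p]=0$ and hence $K_1(X,k)=0$. The main obstacle is invoking the classical interaction of Adams operations with the integral $\gamma$-filtration; once available, the rest is formal and the trace realizes the desired identification via the rank.
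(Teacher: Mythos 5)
Your argument captures the same essential mechanism as the paper's proof: split off the rank contribution $H^0(X_0,\Z)$ from $K_0(X_0)$ by means of the $\gamma$-filtration, on which $\psi^p$ acts by $p^i$ on $\operatorname{gr}^i$, so that passing to $\colim_{\psi^p}$ kills (mod $p$) or $p$-inverts everything in $F^1$; and use properness to identify the geometric side with $k[\pi_0(X)]$. The routes differ in one step: the paper cites \cite{Cou23} only for the \emph{affine} case — where the finiteness of the $\gamma$-filtration is classical (\cite[Theorem II.4.6]{Weib13}) — and then globalizes via the Zariski-descent spectral sequence of \cite[(8.3.2)]{TT90}, whereas you invoke finiteness of the $\gamma$-filtration directly on $K_0(X_0)$ for the proper model $X_0$.

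That direct invocation is where your argument has a gap. You assert that the $\gamma$-filtration is ``finite on a noetherian scheme of finite Krull dimension,'' but the known finiteness theorems require more: Grothendieck's nilpotence is for regular schemes, and the singular versions (e.g.\ in the spirit of \cite[Ch.~II]{Weib13}) are proved for affine schemes or more generally for schemes with an ample family of line bundles. A proper $X_0$ over $k$ need not be quasi-projective and need not carry such a family, so the finiteness of $F^\bullet_\gamma K_0(X_0)$ is not available off the shelf. This is exactly the obstacle the paper's Zariski-descent step is designed to circumvent: the affine pieces of a Zariski cover do satisfy the finiteness, the descent spectral sequence then propagates the conclusion that $K_0(X) \cong \widetilde{K}_0(X) \oplus H_0(X, \Z)$ with $\widetilde{K}_0(X)$ a $\Z[\tfrac{1}{p}]$-module, and the $i\ge 1$ vanishing of $K_i(X,k)$ then follows as in Corollary~\ref{corollary: kratzers vanishing with k-coefficients}. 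To repair your proof, either restrict to quasi-projective (or divisorial) models $X_0$ — which is weaker than what the observation claims — or replace the global $\gamma$-filtration argument with the affine-plus-Zariski-descent argument of the paper.
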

\begin{proof}
Let $X_0$ be a proper model of $X$. 
The reduced global sections $H_0(X_0, \O^{\red})$ have to be constant on each connected component by properness. Since $X_0$ is quasi-compact, there are finitely many connected components, so $H_0(X_0, \O^{\red}) = k[\pi_0(X_0)]$.
Then 
\begin{equation*}
H_0(\Fix_e(X), \O) = H_0(X, \O) = \colim_{\varphi^*} H_0(X_0, \O) = \colim_{\varphi^*} H_0(X_0, \O^{\red}) = k[\pi_0(X_0)] = k[\pi_0(X)]    
\end{equation*}
because $k$ and hence $k[\pi_0(X_0)]$ is already perfect.

Concerning $K$-theory, it holds that $K_0(X) =  \widetilde{K}_0(X) \oplus H_0(X, \Z)$ with $\widetilde{K}_0(X)$ being a $\Z[\tfrac{1}{p}]$-module. Indeed, \cite[Theorem 3.1.2 and Remark 3.1.3]{Cou23} show that this is the case on affines, where it follows from the finite $\gamma$-filtration \cite[Theorem II.4.6]{Weib13}. Moreover, all $K_i(X)$ with $i > 0$ are also $\Z[\tfrac{1}{p}]$-modules. The general case of non-affine pfp $X$ now follows by the Zariski descent spectral sequence \cite[Proposition 8.3, (8.3.2)]{TT90}. See also \cite[Proposition II.8.8.4]{Weib13}.

Tensoring $K_0(X)$ with $k$, the first term $\widetilde{K}_0(X)$ vanishes, while the second becomes $k[\pi_0(X_0)]$. Since the trace map is induced by locally taking the rank, it induces the identity under the above identifications.

For the final statement, $(K_0(X) \otimes_{\Z}^{1} k) \cong  (\widetilde{K}_0(X)\otimes_{\Z}^{1} k) \oplus (H_0(X, \Z)\otimes_{\Z}^{1} k) \cong 0$ together with Corollary \ref{corollary: kratzers vanishing with k-coefficients} shows that $K_i(X, k)$ vanishes for all $i \geq 1$. The vanishing of $H_i(\Fix_e(X), \O)$ for $i \geq 1$ is clear.
\end{proof}

\begin{note}
In the non-equivariant case, the perfected trace map is an isomorphism in homological degrees $\geq 0$ by Observation \ref{lemma: perfect trace map for trivial group}. 
However, this may still fail in negative degrees -- the following example is based on \cite[Remark 11.7]{BS16}.
\end{note}

\begin{example}
Let $Y_0$ be a smooth ordinary elliptic curve over $k$. Take $Y$ to be its perfection. Then $K_{-1}(Y, k) = 0$ by smoothness, but $H_{-1}(Y, \O) \neq 0$ by ordinarity. In particular, the trace map is not an isomorphism in degree $-1$.    
\end{example}

\subsection{Nodal curve and negative \texorpdfstring{$K$}{K}-theory}

\begin{note}\label{note: non-connective G-theory}
Algebraic $K$-theory of perfect schemes can be non-connective: the negative $K$-groups can be non-zero, see Example \ref{example: nodal cubic curve} below. At the same time, we know by Observation \ref{lemma: K-theory and G-theory of perfect schemes} that $K(X) = G(X)$ for perfect schemes. In other words, $G$-theory of perfect schemes can be non-connective. This gives a negative answer to a question posed by \cite[Remark 3.2]{Kha20}.
\end{note}

We now record a concrete example of this non-connectivity; see also \cite{AMM22}.
\begin{example}[Nodal cubic curve]\label{example: nodal cubic curve}
Let $k = \F_p$. Consider the affine nodal cubic curve $C_0 =  \Spec \left( k[x, y] / \left( y^2 - x^2(x+1) \right) \right)$. This has a resolution of singularities by the affine line $\A^1_0$; the preimage of the singular point $(0, 0)$ in $C_0$ is a disjoint union of two points. Perfecting, we get a perfect abstract blowup square     
\begin{center}
    \begin{tikzcd}
        \A^1 \arrow[d] \arrow[r, hookleftarrow] &  \arrow[d] \pt \sqcup \pt \\
        C \arrow[r, hookleftarrow] & \pt
    \end{tikzcd}
\end{center}
The final piece of the associated long exact sequence in $K$-theory
\begin{equation*}
\to K_0(\A^1) \oplus K_0(\pt) \to K_0(\pt \sqcup \pt) \to K_{-1}(C) \to 0   
\quad \text{looks like} \quad
\to \Z \oplus \Z \xrightarrow{} \Z \oplus \Z  \to K_{-1}(C) \to 0  
\end{equation*}
with the map $\Z \oplus \Z \xrightarrow{} \Z \oplus \Z$ given by the matrix
\begin{equation*}
\begin{pmatrix}
    1 & 1 \\
    1 & 1
\end{pmatrix}.    
\end{equation*}
It follows that $K_{-1}(C) = \Z$. Since $K$-theory in lower degrees vanishes, $K_{-1}(C, k) = k$.
\end{example}

\subsection{Toric varieties}\label{section: toric varieties}
We illustrate how $K^T$ and the trace map behave on perfectly proper toric varieties without any smoothness assumptions: the trace map is an isomorphism; moreover $K^T$ and $KH^T$ agree. This easily follows from our setup and available literature \cite{VV03} and \cite{AHW09, CHWW09, CHWW14, CHWW15, CHWW18}. Also see \cite[Theorem 6.3.2]{Rych24} for related discussion of the comparison between $K^T_0(-, \C)$ and fixed-point schemes for complex smooth projective toric varieties.

In fact, the notion of {\it dilation} on the $K$-groups appearing in \cite{CHWW14} is given by the Frobenius and their main results -- such as \cite[Theorem 0.2]{CHWW14} over a field of positive characteristic -- can be reinterpreted and easily proved through perfect geometry. Moreover, we can use the perfect setup to exhibit examples of singular toric varieties with nonzero negative equivariant $K$-groups.

\begin{notation}\label{notation: toric varieties}
Let $T_0$ be a split torus over $k$. Let $X_0$ be a toric variety for $T_0$ corresponding to a fan $\Delta$. Given any cone $\sigma \in \Delta$, there is the corresponding subtorus $T_{\sigma, 0} \leq T_0$ and the associated $T_0$-orbit $T_0/T_{\sigma, 0}$ on $X_0$. Note that in our conventions, toric varieties are assumed to be {\it split} -- we do not consider any non-split forms over $k$.
\end{notation}

\begin{discussion}[Toric resolutions]\label{discussion: toric resolutions}
Let $X_0$ be a toric variety with respect to a split torus $T_0$. Then one combinatorially defines a resolution by a smooth toric variety with respect to the same torus $T_0$ as in \cite[\S 5]{Cox00}. Furthermore, such a resolution is obtained from a sequence of partial resolutions of the form \cite[(2.2)]{CHWW09} -- these fit into abstract blowup squares
\begin{equation*}
    \begin{tikzcd}
        Y_0 \arrow[d] & E_0 \arrow[d] \arrow[l] \\
        X_0 & \arrow[l] Z_0
    \end{tikzcd}
\end{equation*}
This square is a $T_0$-equivariant abstract blowup of toric varieties for quotients of $T_0$.
\end{discussion}

\begin{observation}\label{observation: K = KH for toric varieties}
Let $T$ be a perfect split torus and $X$ any perfect toric variety. Then integrally
\begin{equation*}
    K^T(X) \simeq KH^T(X).
\end{equation*}
\end{observation}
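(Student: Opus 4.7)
The strategy is to reduce to smooth toric varieties (where the claim is clear) via a double induction that combines the combinatorial toric resolution of Discussion \ref{discussion: toric resolutions} with equivariant perfect proper excision.

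First, I would argue by induction on $\dim X$. In dimension zero, $X$ is a finite disjoint union of $T$-points, in particular perfectly smooth, so $K^T(X) \simeq KH^T(X)$ via the natural comparison. For the inductive step, assume the claim for all perfect toric varieties of dimension strictly smaller than $\dim X$. Fix a finite-type toric model $X_0$ and choose a toric resolution $\tilde X_0 \to X_0$ as in Discussion \ref{discussion: toric resolutions}. This factors as a sequence of partial toric resolutions
\begin{equation*}
\tilde X_0 = X_{N,0} \to X_{N-1,0} \to \cdots \to X_{0,0} = X_0,
\end{equation*}
each of which fits into a $T_0$-equivariant abstract blowup square of toric varieties
\begin{equation*}
\begin{tikzcd}
X_{i+1,0} \arrow[d] & E_{i,0} \arrow[d] \arrow[l] \\
X_{i,0} & Z_{i,0} \arrow[l]
\end{tikzcd}
\end{equation*}
in which the center $Z_{i,0}$ is the closure of a $T_0$-orbit corresponding to a positive-dimensional cone; by the orbit-cone correspondence both $Z_{i,0}$ and the exceptional $E_{i,0}$ are toric varieties (for quotient tori of $T_0$, hence for $T_0$) of dimension strictly less than $\dim X$. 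After passing to perfections these become $T$-equivariant perfect abstract blowup squares, since perfection preserves closed and open immersions by Proposition \ref{proposition: open and closed immersions} and commutes with fiber products.

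Next, I would run a secondary induction on $i$, descending from $N$ to $0$. The base case $X_N = \tilde X$ is perfectly smooth, so $K^T(\tilde X) \simeq KH^T(\tilde X)$ holds. For the step, given that $K^T(X_{i+1}) \simeq KH^T(X_{i+1})$, apply Theorem \ref{lemma: homotopy fiber square for K_T} and Lemma \ref{lemma: homotopy fiber square for KH} to the perfection of the $i$-th square. This yields a morphism between homotopy fiber squares of spectra, induced by the natural transformation $K^T \to KH^T$. On the corners $E_i$ and $Z_i$ the map is an equivalence by the outer inductive hypothesis (as these are lower-dimensional perfect toric varieties), while on $X_{i+1}$ it is an equivalence by the inner hypothesis. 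Taking the long exact sequences and applying the five lemma degreewise gives $K^T(X_i) \simeq KH^T(X_i)$. Iterating down to $i = 0$ produces the desired equivalence $K^T(X) \simeq KH^T(X)$.

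The main obstacle is really just organizing the induction to be well-founded: one must confirm that the combinatorial toric resolution procedure terminates and that its centers of star subdivision are genuinely lower-dimensional closed toric subvarieties, so that the outer induction on dimension applies to both $E_i$ and $Z_i$. Once this is set up, all the real content is borrowed from perfect proper excision (Theorem \ref{lemma: homotopy fiber square for K_T} and Lemma \ref{lemma: homotopy fiber square for KH}) together with the fact that $K \to KH$ is an equivalence on smooth equivariant schemes.
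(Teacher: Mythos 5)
Your proposal is correct and follows essentially the same approach as the paper: induction on dimension, decompose a toric resolution into a chain of $T$-equivariant abstract blowup squares as in Discussion \ref{discussion: toric resolutions}, apply perfect proper excision for both $K^T$ (Theorem \ref{lemma: homotopy fiber square for K_T}) and $KH^T$ (Lemma \ref{lemma: homotopy fiber square for KH}), and bootstrap from the smooth case via the five lemma. The only difference is cosmetic — you phrase the finitely-many-steps reduction as an explicit secondary descending induction, whereas the paper states it as "in finitely many steps, we reduce to the smooth situation."
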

\begin{proof}
For a perfectly smooth toric variety $X$ we clearly have $K^T(X) \simeq KH^T(X)$.
In general, work by induction on the Krull dimension of $X$.
Since both $K^T(-)$ and $KH^T(-)$ satisfy proper excision by Theorem \ref{lemma: homotopy fiber square for K_T} and Lemma \ref{lemma: homotopy fiber square for KH}, looking at Discussion \ref{discussion: toric resolutions}, the statement for $X$ reduces to the statement for $Y$, $Z$, $E$. Since $Z$ and $E$ have smaller Krull dimension and the statement is stable under enlarging the torus, we have reduced the claim to the partial resolution $Y$. In finitely many steps, we reduce to the smooth situation.
\end{proof}

The irreducible components of fixed-point schemes for toric varieties have a combinatorial description. First, $\Fix_T(X)$ over $T$ has several horizontal components isomorphic to $T$ and labelled by the finite set of torus fixed-points $|X^T|$. Over more special subvarieties of $T$, we get further vertical components which can be read off from the fan combinatorics.
More precisely, the fixed point scheme -- together with its structure map -- is given by the commutative diagram
\begin{equation}\label{diagram: fixed-point scheme of toric variety}
\begin{tikzcd}
\bigcup_{\sigma \in \Delta} T_{\sigma} \times X^{T_{\sigma}} \arrow[d, "\pr_{T_{\sigma}}"] \arrow[r, "\cong"] & \Fix_T(X) \arrow[d]\\
\bigcup_{\sigma \in \Delta} T_{\sigma} \arrow[r] & T
\end{tikzcd}     
\end{equation}
compatibly with the residual $T$-action. By definition, the {\it horizontal} components are the irreducible components labeled by the $\sigma$ of maximal dimension. We call the other components {\it vertical}.

Assuming $X$ is perfectly proper, the vertical components are also perfectly proper and thus carry only constant global functions. Hence $H_0(\Fix_\frac{T}{T}, \O)$ injects into the direct sum $\bigoplus_{|X^T|} H_0(\frac{T}{T}, \O)$ of functions on the horizontal components. It is cut out by the equalizer conditions given by restriction to the vertical components.

\begin{theorem}\label{theorem: trace map for perfectly proper toric varieties}
Let $T$ be a perfect split torus and $X$ any perfectly proper toric variety. Then the trace map induces an equivalence
\begin{equation}\label{equation: trace map for toric varieties}
    K^T(X, k) \xrightarrow{\simeq} \RG(\Fix_{\frac{T}{T}}(X), \O).
\end{equation}
and both sides are supported in homological degrees $\leq 0$. 
\end{theorem}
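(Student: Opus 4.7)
The plan is to prove both the trace isomorphism and the degree bound simultaneously, by induction on the Krull dimension of $X$, using the proper-excision machinery already established. Both sides fit into the framework of $k$-linear localizing invariants on $T$-equivariant pfp perfect schemes: the left-hand side by construction, and the right-hand side by identifying $\RG(\Fix_{T/T}(X),\O)$ with $HH^T(X,k)$ via the flawlessness of perfect torus quotients (Lemma \ref{lemma: perfectness of perfect torus quotients}), so that the trace map of Discussion \ref{discussion: traces from K-theory to HH} is a natural transformation between them. Consequently both sides satisfy perfect proper excision on $T$-equivariant abstract blowup squares (Theorem \ref{lemma: homotopy fiber square for K_T}), and the trace is compatible with the induced long exact sequences. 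The base case $X=\pt$ is Lemma \ref{lemma: perfect equivariant point}.

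For the inductive step, I would use the toric resolutions of Discussion \ref{discussion: toric resolutions}: any perfectly proper toric variety $X$ fits into a sequence of $T$-equivariant perfect abstract blowup squares
\begin{equation*}
\begin{tikzcd}
Y \arrow[d] & E \arrow[l] \arrow[d] \\ X & Z \arrow[l]
\end{tikzcd}
\end{equation*}
in which $Y$ is more resolved than $X$, and $Z$, $E$ are toric varieties for quotients of $T$ of strictly smaller Krull dimension (hence covered by the inductive hypothesis). Applying the five-lemma to the long exact sequences on both sides of the trace map reduces the general case to smooth perfectly proper toric $X$.

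For the smooth case, I would iterate the same argument in a different direction: a smooth projective toric variety over $\F_p$ is connected to a product of projective spaces by a finite zig-zag of $T$-equivariant toric blowups with smooth toric centers, each of which is again an equivariant perfect abstract blowup square with strictly smaller-dimensional exceptional divisor and center. Inducting on the number of such moves reduces the claim to the case of (perfections of) products of projective spaces, where it follows from Lemma \ref{lemma: perfect equivariant point} together with repeated application of the projective bundle stability Lemma \ref{lemma: stability on perfect projective bundles} (or the direct computation in Example \ref{lemma: perfect trace for projective space}). Alternatively, one could use the Białynicki–Birula stratification into $T$-equivariant affine cells together with the localization sequence, invoking $KH^T$-homotopy invariance and the identification $K^T \simeq KH^T$ on smooth schemes (Observation \ref{observation: K = KH for toric varieties}), but the toric-blowup route keeps everything within the proper-excision formalism of the paper.

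The degree bound $\le 0$ is established in parallel with the inductive step. On the left, Kratzer's Corollary \ref{corollary: kratzers vanishing with k-coefficients} already forces $K^T_i(X,k)=0$ for $i\ge 2$, and the trace isomorphism together with the induction and \eqref{equation: perfect HH^T vanishes in positive degrees} pushes $K^T_1(X,k)=0$; on the right, $\RG_i(\Fix_{T/T}(X),\O) = H_i(\Fix_T(X),\O)^T$ for all $i$, and the vanishing for $i\ge 1$ is true in the smooth base case (smoothness of the $T$-fixed locus of a smooth $T$-scheme makes each geometric fiber of $\Fix_T(X)\to T$ smooth, cf.\ Lemma \ref{lemma: perfect base change on fixed point schemes}) and propagates through the blowup long exact sequence. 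The main obstacle will be the smooth case: verifying cleanly that the iterated toric blowup argument terminates at a product of projective spaces with controllable equivariant structure, and that the trace compatibility and the degree bound are preserved at each blowup step. Once that is in place, the singular case is a routine five-lemma chase through the perfect proper excision established in \S\ref{section: perfect K-theory}.
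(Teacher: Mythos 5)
Your reduction of the singular case to the smooth case is exactly the paper's: both sides are localizing invariants via $HH^T(-,k)\cong\RG(\Fix_{T/T}(-),\O)$ (Lemma \ref{lemma: perfectness of perfect torus quotients}), both satisfy $T$-equivariant perfect proper excision (Theorem \ref{lemma: homotopy fiber square for K_T}), and the toric partial resolutions of Discussion \ref{discussion: toric resolutions} with a five-lemma chase peel off the singularities by induction on dimension. Where you diverge is the smooth case. You propose to walk a smooth complete toric variety down to a product of projective spaces by a zig-zag of equivariant blowups and blowdowns at smooth toric centers, running the five-lemma chase both directions along the zig-zag; the base case is then the projective bundle formula. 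The paper instead invokes Vezzosi--Vistoli's explicit presentation of $K^T_0$ of a smooth complete toric variety \cite[Theorem 6.2]{VV03}, observes that this presentation matches the global sections of $\Fix_{T/T}(X)$ read off from proper excision, and checks by fan combinatorics (simpliciality of the fan) that $\Fix_t(X)$ is a disjoint union of smooth complete toric varieties, hence has no higher coherent cohomology; together with the absence of $p$-torsion in $K^T_0$ \cite[Theorem 6.9]{VV03} and Kratzer's Corollary \ref{corollary: kratzers vanishing with k-coefficients}, this even shows that for smooth $X$ both sides are concentrated in degree $0$ exactly. Both routes are external to the paper's machinery: the paper imports the Vezzosi--Vistoli computation, you import the toric weak factorization theorem (that any two smooth complete fans with the same lattice are connected by a finite sequence of star subdivisions and inverse star subdivisions). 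The paper's route is more economical because the cited theorem already produces the ring in the shape of the fixed-point description; yours relies on a deeper factorization result and must keep track of the trace compatibility through both directions of the zig-zag, but would in principle work.

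Two small points. First, your remark that you would restrict to \emph{projective} smooth toric varieties does not suffice: the resolution of a complete (but not projective) toric variety produced by Discussion \ref{discussion: toric resolutions} may again be complete but not projective, so you need the zig-zag statement for smooth \emph{complete} toric varieties --- fortunately the toric weak factorization does apply at that generality, but the restriction as written would leave a gap. Second, your argument for the degree bound on the right-hand side is a bit muddled: the vanishing $\RG_i(\Fix_{T/T}(X),\O)=0$ for $i\ge 1$ is already \eqref{equation: perfect HH^T vanishes in positive degrees}, a consequence of the classicality of perfect derived schemes, and does not use smoothness or any fiberwise criterion; smoothness is only needed for the sharper \emph{degree zero} support, which the paper proves but the stated theorem does not require.
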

\begin{proof}
    First assume $X$ is smooth. Note that $\RG(\Fix_{\frac{T}{T}}(X), \O)$ is supported in homological degrees $\leq 0$ by perfectness; we claim that it vanishes in degrees $\leq -1$ as well. Since $(-)^T$ is exact, it is enough to check this on $\Fix_T(X)$. By perfect base change \cite[Lemma 3.18]{BS16}, it is enough to check it on $\Fix_t(X)$ for each geometric point $t$ of $T$. 
    But now it follows from the fan combinatorics that $\Fix_t(X)$ is a disjoint union of (perfectly proper smooth) toric varieties\footnote{The following argument for this claim was explained to us by Kamil Rychlewicz.}. Indeed, $\Fix_t(X)$ is the subvariety given by the set of those cones in the fan of $X$ whose linear span contains $t$. We need to prove that these cones decompose into a disjoint union of stars. Since $X$ is smooth, the fan of $X$ is in particular simplicial. So if $t$ lies in some cones $\sigma_1$, $\sigma_2$, it also lies in the intersection of their linear spans, which is itself the linear span of a cone by simpliciality. We thus succeed by decomposing the fixed cones according to the non-emptyness of their intersections.
    Since toric varieties have no higher cohomology by \cite[end of proof of Theorem 5.1]{Cox00}, we deduce that $\RG(\Fix_{\frac{T}{T}}(X), \O)$ is supported in degree zero.

   Moreover, the desired isomorphism in degree zero follows from the description \cite[Theorem 6.2]{VV03} together with the properness of $X$. Indeed, the equalizer condition in \cite{VV03} matches the condition on global sections of the fixed-point scheme imposed by its vertical fibers via proper excision (using the description from \eqref{diagram: fixed-point scheme of toric variety}). Hence the trace map is an isomorphism in degree zero. Since $X$ is smooth, $K^T(X, k)$ is supported in homological degrees $\geq 0$. Moreover, by \cite[Theorem 6.9]{VV03} we in particular know that $K^T(X)$ has no $p$-torsion, so $K^T(X, k)$ is supported in degree zero by Lemma \ref{lemma: kratzers argument}. This finishes the proof for smooth $X$; we in fact showed that the isomorphism \eqref{equation: trace map for toric varieties} is supported in degree zero.

   Now, both $K^T(-, k)$ and $\RG(\Fix_{\frac{T}{T}}(X), \O) = HH^T(-, k)$ satisfy perfect proper excision by Theorem \ref{lemma: homotopy fiber square for K_T}. By Discussion \ref{discussion: toric resolutions} and induction on dimension, the trace map is an isomorphism. Both sides are then supported in homological degrees $\leq 0$ by \eqref{equation: perfect HH^T vanishes in positive degrees}.
\end{proof}

\begin{remark}\label{remark: toric varieties and quotient by T}
In the above theorem, it is not necessary to take the quotient by $T$ -- we have an equivalence
\begin{equation*}
\RG(\Fix_{\frac{T}{T}}(X), \O) \xrightarrow{\simeq} \RG(\Fix_{T}(X), \O).   
\end{equation*}
Indeed, the action of $T$ on $H_0(\Fix_T(X), \O)$ is trivial because of the embedding into a direct sum of copies of $H_0(T, \O)$. In particular, the claim holds for perfectly smooth toric varieties. Since both sides satisfy proper excision, we deduce the singular case as above by toric resolutions.
\end{remark}

In particular, we can use the above results to find examples of proper singular toric varieties $X_0$ with non-connective $K^{T_0}(X_0)$. Note that this statement does not contain any perfection; we did not find similar computations elsewhere. Let us record this in more detail.
\begin{note}\label{note: negative K-theory of perfect toric varieties}
When $X$ is smooth (or more generally when the fan of $X$ is simplicial), the proof of Theorem \ref{theorem: trace map for perfectly proper toric varieties} actually shows that both sides are supported in homological degree $0$. 

However, negative degrees may occur in general. For an example, consider the fan in $\Z^{\oplus 3}$ spanned by the eight vertices $(\pm 2, \pm 2, -1)$, $(\pm 1, \pm 1, 1)$ and $t \in T$ spanning the third coordinate axis. Then $\Fix_t(X) \cong C_4 \sqcup \pt \sqcup \pt$, where $C_4$ denotes a cyclic chain of four perfect $\P^1$'s. Resolving $C_4$ by the disjoint union of four perfect $\P^1$'s and computing $\RG(C_4, \O)$ from the associated abstract blowup square, we deduce that $H_{-1}(C_4, \O) \cong k \neq 0$. It follows from Remark \ref{remark: toric varieties and quotient by T} and Lemma \ref{lemma: perfect base change on fixed point schemes} that $K^T_{-1}(X, k) \cong \RG_{-1}(\Fix_{\frac{T}{T}}(X), \O) \cong \RG_{-1}(\Fix_T(X), \O) \neq 0$.

In conclusion, perfect singular toric varieties may have nontrivial negative equivariant algebraic $K$-theory after perfection. In particular, this happens classically (before perfection) by Lemma \ref{proposition: K-theory of inverse limit}.
\end{note}
\printbibliography

@incollection{Zhu15,
  author = {Xinwen Zhu},
  booktitle = {Geometry of moduli spaces and representation theory}, 
  title = {An introduction to affine {G}rassmannians and the geometric {S}atake equivalence},
  year = {2017},
  publisher = {American Mathematical Society},
  doi = {10.1090/pcms/024},
  series = {IAS/Park City Mathematics Series},
  volume = {24},
}

@incollection{Yun15,
  author = {Zhiwei Yun},
  booktitle = {Geometry of moduli spaces and representation theory}, 
  title = {Lectures on {S}pringer theories and orbital integrals},
  year = {2017},
  publisher = {American Mathematical Society},
  doi = {10.1090/pcms/024},
  series = {IAS/Park City Mathematics Series},
  volume = {24},
}

@article{Zhu14,
author = {Zhu, Xinwen},
year = {2017},
pages = {403–492},
title = {Affine {G}rassmannians and the geometric {S}atake in mixed characteristic},
volume = {185},
journal = {Annals of Mathematics},
doi = {10.4007/annals.2017.185.2.2},
}

@article{BS16,
author = {Bhatt, Bhargav and Scholze, Peter},
year = {2017},
pages = {329–423},
title = {Projectivity of the {W}itt vector affine {G}rassmannian},
volume = {209},
journal = {Inventiones mathematicae},
doi = {10.1007/s00222-016-0710-4},
}

@misc{Sta,
    shorthand     = {Sta},
    label = {Sta},
    author    = "Stacks Project Authors",
    title     = "Stacks project",
    url       = {https://stacks.math.columbia.edu/}
}

@book{Weib13,
  title={The $K$-book: An Introduction to Algebraic $K$-theory},
  author={Weibel, Charles A.},
  series={Graduate Studies in Mathematics},
  volume ={145},
  year={2013},
  publisher={American Mathematical Society},
  doi={10.1090/gsm/145},
  isbn={9781470409432},
}

@inproceedings{TT90,
  title={Higher algebraic $K$-theory of schemes and of derived categories},
  author={Robert W. Thomason and Thomas Trobaugh},
  year={1990},
  booktitle={The Grothendieck Festschrift, Volume III},
  series = {Progress in Mathematics},
  volume = {88},
  pages = {247–435},
  publisher = {Birkhäuser},
  doi = {10.1007/978-0-8176-4576-2_10},
}

@inbook{Tho88,
title = {XX. Algebraic $K$-theory of group scheme actions},
booktitle = {Algebraic Topology and Algebraic $K$-Theory},
series = {Annals of Mathematics Studies},
volume ={113},
author = {Robert W. Thomason},
publisher = {Princeton University Press},
pages = {539--563},
year = {1988},
doi = {10.1515/9781400882113-021},
}

@article{Tho87,
title = {Equivariant resolution, linearization, and Hilbert's fourteenth problem over arbitrary base schemes},
author = {Robert W. Thomason},
journal = {Advances in Mathematics},
volume = {65},
number = {1},
pages = {16-34},
year = {1987},
doi = {10.1016/0001-8708(87)90016-8},
}

@article{Kha20,
  title={$K$-theory and $G$-theory of derived algebraic stacks},
  author={Adeel A. Khan},
  journal={Japanese Journal of Mathematics},
  year={2022},
  volume={17},
  pages={1 - 61},
  doi = {10.1007/s11537-021-2110-9},
}

@article{KR22,
title = {Generalized cohomology theories for algebraic stacks},
journal = {Advances in Mathematics},
volume = {458},
pages = {109975},
year = {2024},
issn = {0001-8708},
doi = {10.1016/j.aim.2024.109975},
author = {Adeel A. Khan and Charanya Ravi},
}

@incollection{Wei89,
	doi = {10.1090/conm/083/991991},
	year = 1989,
	publisher = {American Mathematical Society},
	pages = {461--488},
	author = {Charles A. Weibel},
	title = {Homotopy algebraic $K$-theory},
    booktitle = {Algebraic $K$-Theory and Algebraic Number Theory},
}

@article{Mor12,
  title={Pro cdh-descent for cyclic homology and {$K$}-theory},
  author={Matthew Morrow},
  journal={Journal of the Institute of Mathematics of Jussieu},
  year={2016},
  volume={15},
  pages={539 - 567},
  doi = {10.1017/S1474748014000413},
}

@article{KST16,
  title={Algebraic $K$-theory and descent for blow-ups},
  author={Moritz Kerz and Florian Strunk and Georg Tamme},
  journal={Inventiones mathematicae},
  year={2018},
  volume={211},
  pages={523-577},
  doi={10.1007/s00222-017-0752-2},
}

@article{Tam17,
  title={Excision in algebraic $K$-theory revisited},
  author={Georg Tamme},
  journal={Compositio Mathematica},
  year={2018},
  volume={154},
  pages={1801 - 1814},
  doi = {10.1112/S0010437X18007236},
}

@article{LT19,
  title={On the $K$-theory of pullbacks},
  author={Markus Land and Georg Tamme},
  journal={Annals of Mathematics},
  year={2019},
  volume={190},
  pages={877-930},
  doi = {10.4007/annals.2019.190.3.4},
}

@article{AMM22,
    author = {Benjamin Antieau and Akhil Mathew and Matthew Morrow},
    title = {$K$-theory of perfectoid rings},
    journal = {Documenta Mathematica},
    year = {2022},
    volume = {27},
    pages ={1923--1951},
    doi = {10.25537/dm.2022v27.1923-1951},
}

@article{Kha18,
    author = {Adeel A. Khan},
    title = {Algebraic $K$-theory of quasi-smooth blow-ups and cdh descent},
    journal = {Annales Henri Lebesgue},
    pages = {1091--1116},
    volume = {3},
    year = {2020},
    doi = {10.5802/ahl.55}
}

@unpublished{Cou23,
author = {Kevin Coulembier},
year = {2023},
title = {$K$-theory and perfection},
note = {preprint},
url = {https://arxiv.org/abs/2304.01421},
}

@article{CW22, 
title = {Perfecting group schemes},
doi={10.1017/S1474748024000033}, 
journal={Journal of the Institute of Mathematics of Jussieu}, 
author={Coulembier, Kevin and Williamson, Geordie}, 
year={2024}, 
pages={1–43}
}

@article{BST13,
  title={The weak ordinarity conjecture and {F}-singularities},
  author={Bhargav Bhatt and Karl Schwede and Shunsuke Takagi},
  journal ={Advanced Studies in Pure Mathematics},
  volume = {74},
  year = {2017},
  pages ={11-39},
  doi = {10.2969/aspm/07410011},
}

@article{KM21, 
title={$K$-theory of valuation rings}, 
volume={157},  
journal={Compositio Mathematica}, 
publisher={London Mathematical Society}, 
author={Shane Kelly and Matthew Morrow}, 
year={2021}, 
pages={1121–1142},
doi = {10.1112/S0010437X21007119},
}

@article{EK18,
  title={Perfection in motivic homotopy theory},
  author={Elden Elmanto and Adeel A. Khan},
  journal={Proceedings of the London Mathematical Society},
  year={2020},
  volume={120},
  pages={28-38},
  doi = {10.1112/plms.12280},
}

@incollection{Hau21,
  title = {Enhanced mirror symmetry for {L}anglands dual {H}itchin systems},
  author = {Tam{\'a}s Hausel},
  year={2023},
  booktitle={Proceedings of the International Congress of Mathematicians 2022},
  publisher = {EMS Press},
  doi = {10.4171/ICM2022/164},
  pages={2228–2249}
}

@article{HR23,
author = {Tam{\'a}s Hausel and Kamil Rychlewicz},
title = {Spectrum of equivariant cohomology as a fixed point scheme},
year     = {2025},
journal = {Épijournal de Géométrie Algébrique},
volume = {9},
doi ={10.46298/epiga.2025.12591},
}

@book{Hum95,
    author    = {James E. Humphreys},
    title     = {Conjugacy classes in semisimple algebraic groups},
    publisher = {Americal Mathematical Society},
    series={Mathematical Surveys and Monographs},
    volume = {43},
    year = {1995},
    doi = {10.1090/surv/043},
}

@book{Mil17,
  title={Algebraic Groups: The Theory of Group Schemes of Finite Type over a Field},
  author={James S. Milne},
  isbn={9781009018586},
  series={Cambridge Studies in Advanced Mathematics},
  year={2017},
  publisher={Cambridge University Press},
  doi = {10.1017/9781316711736},
}

@article{LV81,
author = {John V. Leahy and Marie A. Vitulli},
title = {{Seminormal rings and weakly normal varieties}},
volume = {82},
journal = {Nagoya Mathematical Journal},
pages = {27 -- 56},
year = {1981},
doi = {10.1017/S0027763000019279},
}

@article{GT80,
author = {Silvio Greco and Carlo Traverso},
title = {On seminormal schemes},
journal = {Compositio Mathematica},
volume = {40},
pages = {325-365},
year = {1980},
}

@article{Man80,
  title={Some properties of weakly normal varieties},
  author={Mirella Manaresi},
  journal={Nagoya Mathematical Journal},
  year={1980},
  volume={77},
  pages={61 - 74},
  doi = {10.1017/S0027763000018663},
}

@article{KP16,
author = {Kondyrev, Grigory and Prihodko, Artem},
year = {2020},
pages = {1739-1763},
title = {Categorical proof of holomorphic {A}tiyah-{B}ott formula},
volume = {19},
number={5},
journal = {Journal of the Institute of Mathematics of Jussieu},
doi = {10.1017/S1474748018000543},
}

@article{BFN10,
    author = {David Ben-Zvi and John Francis and David Nadler},
    title = {Integral transforms and Drinfeld centers in derived algebraic geometry},
    journal = {Journal of the American Mathematical Society},
    year = {2010},
    volume = {23},
    pages = {909-966},
    doi = {10.1090/S0894-0347-10-00669-7},
}

@article{BN13,
author = {David Ben-Zvi and David Nadler},
title = {Loop spaces and representations},
volume = {162},
journal = {Duke Mathematical Journal},
number = {9},
publisher = {Duke University Press},
pages = {1587 -- 1619},
year = {2013},
doi = {10.1215/00127094-2266130},
}

@article{Chen20,
title = {Equivariant localization and completion in cyclic homology and derived loop spaces},
journal = {Advances in Mathematics},
volume = {364},
pages = {107005},
year = {2020},
doi = {10.1016/j.aim.2020.107005},
author = {Harrison Chen},
}

@unpublished{Hoy18,
    author = {Marc Hoyois} ,
    title = {The homotopy fixed points of the circle action on Hochschild homology},
    note = {unpublished},
    year = {2018},
    doi = {10.48550/arXiv.1506.07123},
}

@article{HSS17,
title = {Higher traces, noncommutative motives, and the categorified Chern character},
author = {Marc Hoyois and Sarah Scherotzke and Nicolò Sibilla},
journal = {Advances in Mathematics},
volume = {309},
pages = {97-154},
year = {2017},
doi = {10.1016/j.aim.2017.01.008},
}

@article{BGT13,
author = {Andrew J Blumberg and David Gepner and Goncalo Tabuada},
title = {{A universal characterization of higher algebraic $K$–theory}},
volume = {17},
number = {2},
journal = {Geometry \& Topology},
publisher = {MSP},
pages = {733 -- 838},
year = {2013},
doi = {10.2140/gt.2013.17.733},
}

@article{Toe14,
    author = {Bertrand Toën},
    title = {Derived algebraic geometry},
    journal = {EMS Surveys in Mathematical Sciences},
    volume = {1},
    number = {2},
    year = {2014},
    pages = {153–240},
    doi = {10.4171/EMSS/4},
}

@article{McC94,
    author = {Randy McCarthy},
    title = {The cyclic homology of an exact category},
    journal = {Journal of Pure and Applied Algebra},
    volume = {93},
    number = {3},
    pages = {251-296},
    year = {1994},
    doi = {10.1016/0022-4049(94)90091-4},
}

@article{KM00,
    author = {Miriam Ruth Kantorovitz and Claudia Miller},
    title = {An explicit description of the Dennis trace map},
    journal = {Communications in Algebra},
    volume = {28},
    number = {3},
    pages = {1429-1447},
    year = {2000},
    doi = {10.1080/00927870008826904},
}

@book{DGM12,
    author = {Bjørn Ian Dundas and Thomas G. Goodwillie and Randy McCarthy},
    title = {The Local Structure of Algebraic $K$-Theory},
    publisher = {Springer},
    year = {2012},
    isbn = {978-1-4471-4392-5},
    doi  = {10.1007/978-1-4471-4393-2}
}

@incollection{Mad95,
    author = {Ib Madsen},
    title = {Algebraic $K$-theory and traces},
    booktitle = {Current Developments in Mathematics},
    pages = {191–321},
    year = {1995},
    publisher ={International Press, Cambridge},
    doi = {10.4310/CDM.1995.v1995.n1.a3},
}

@unpublished{AGR17,
    author = {David Ayala and Aaron Mazel-Gee and Nick Rozenblyum},
    title = {The geometry of the cyclotomic trace},
    note = {preprint},
    year ={2017}, 
    url ={https://arxiv.org/abs/1710.06409},
}

@article{VV03,
    author = {Gabriele Vezzosi and Angelo Vistoli},
    title = {Higher algebraic $K$-theory for actions of diagonalizable groups},
    journal = {Inventiones mathematicae},
    volume = {153},
    pages ={1–44},
    year = {2003},
    doi = {10.1007/s00222-002-0275-2},
}

@article{AHW09,
title = {The equivariant $K$-theory of toric varieties},
author = {Suanne Au and Mu-wan Huang and Mark E. Walker},
journal = {Journal of Pure and Applied Algebra},
volume = {213},
number = {5},
pages = {840-845},
year = {2009},
doi = {10.1016/j.jpaa.2008.10.010},
}

@Inbook{CHWW18,
author={Corti{\~{n}}as, G. and Haesemeyer, C. and Walker, M. E. and Weibel, C. A.},
title={The $K$-theory of toric schemes over regular rings of mixed characteristic},
booktitle={Singularities, Algebraic Geometry, Commutative Algebra, and Related Topics: Festschrift for Antonio Campillo on the Occasion of his 65th Birthday},
year={2018},
publisher={Springer},
pages={455--479},
isbn={978-3-319-96827-8},
doi={10.1007/978-3-319-96827-8_19},
}

@article{CHWW09,
author = {Guillermo Cortiñas and Christian Haesemeyer and Mark E. Walker and Charles Weibel},
 journal = {Transactions of the American Mathematical Society},
 pages = {3325--3341},
 title = {The $K$-theory of toric varieties},
 volume = {361}, 
 number = {6},
 year = {2009},
 doi = {10.1090/S0002-9947-08-04750-8},
}

@article{CHWW14,
author = {Guillermo Cortiñas and Christian Haesemeyer and Mark E. Walker and Charles Weibel},
title = {The $K$-theory of toric varieties in positive characteristic},
journal = {Journal of Topology},
volume = {7},
number = {1},
pages = {247-286},
doi = {10.1112/jtopol/jtt026},
year = {2014}
}

@article{CHWW15,
title = {Toric varieties, monoid schemes and cdh descent},
author = {Guillermo Cortiñas and Christian Haesemeyer and Mark E. Walker and Charles Weibel},
pages = {1--54},
volume = {2015},
journal = {Journal für die reine und angewandte Mathematik (Crelles Journal)},
doi = {10.1515/crelle-2012-0123},
year = {2015},
}

@Inbook{Cox00,
author= {David A. Cox},
title= {Toric varieties and toric resolutions},
bookTitle= {Resolution of Singularities: A research textbook in tribute to Oscar Zariski},
year={2000},
publisher={Birkh{\"a}user},
pages={259--284},
isbn={978-3-0348-8399-3},
doi={10.1007/978-3-0348-8399-3_9},
}

@unpublished{Jia23,
title = {Derived categories of derived grassmannians},
author = {Qingyuan Jiang},
note = {preprint},
url = {https://arxiv.org/abs/2307.02456},
year = {2023},
}

@article{Jia22a,
title = {Derived projectivizations of complexes},
author = {Qingyuan Jiang},
journal = {Memoirs of the American Mathematical Society},
doi = {10.1090/memo/1604},
year = {2025},
volume = {316},
number = {1604},
pages = {v+131 pp}
}

@unpublished{Jia22b,
title = {Derived Grassmannians and derived Schur functors},
author = {Qingyuan Jiang},
note = {preprint},
year = {2022},
url = {https://arxiv.org/abs/2212.10488},
}

@phdthesis{Rych24,
    author = {Kamil Rychlewicz},
    title = {Equivariant cohomology and rings of functions},
    school = {ISTA},
    year = {2024},
}

@unpublished{Wan24,
title = {Multiplicative Hitchin fibration and fundamental lemma},
author = {Griffin Wang},
url = {https://arxiv.org/abs/2402.19331},
note = {preprint},
year = {2024},
}

@article{Hau24,
author = {Tamás Hausel},
year = {2024},
month = {09},
title = {Commutative avatars of representations of semisimple Lie groups},
volume = {121},
number = {38},
pages = {e2319341121},
journal = {Proceedings of the National Academy of Sciences of the United States of America},
doi = {10.1073/pnas.2319341121}
}

@article{Kra80a,
    author = {Charles Kratzer},
    title = {Opérations d’Adams et représentations de groupes},
    journal = {L'Enseignement Mathématique},
    year = {1980},
    volume = {26},
    pages = {141-154},
    doi = {10.5169/seals-51063},
}

@article{Kra80b,
    author = {Charles Kratzer},
    title = {$\lambda$-structure en $K$-théorie algébrique},
    journal = {Commentarii Mathematici Helvetici},
    year = {1980},
    volume = {55},
    pages = {233-254},
    doi = {10.1007/BF02566684},
}

@article{Hil81,
author = {Howard L. Hiller},
title = {$\lambda$-rings and algebraic $K$-theory},
journal = {Journal of Pure and Applied Algebra},
volume = {20},
number = {3},
pages = {241-266},
year = {1981},
doi = {10.1016/0022-4049(81)90062-1},
}

@article{Koc98,
     author = {K\"ock, Bernhard},
     title = {The Grothendieck-Riemann-Roch theorem for group scheme actions},
     journal = {Annales scientifiques de l'\'Ecole Normale Sup\'erieure},
     pages = {415--458},
     volume = {31},
     number = {3},
     year = {1998},
     doi = {10.1016/s0012-9593(98)80140-7},
}

@article{HKT17,
     author = {Tom Harris and Bernhard K\"ock and Lenny Taelman},
     title = {Exterior power operations on higher $K$-groups via binary complexes},
     journal = {Annals of $K$-theory},
     pages = {409–450},
     volume = {2},
     number = {3},
     year = {2017},
     doi = {10.2140/akt.2017.2.409},
}

@article{KZ21,
title = {Comparison of exterior power operations on higher $K$-theory of schemes},
author = {Bernhard K\"ock and Ferdinando Zanchetta},
journal = {Mathematische Zeitschrift},
year = {2025},
doi = {10.1007/s00209-025-03681-2},
volume = {309},
number = {63},
}

@article{Ser68,
     author = {Jean-Pierre Serre},
     title = {Groupe de {Grothendieck} des sch\'emas en groupes r\'eductifs d\'eploy\'es},
     journal = {Publications Math\'ematiques de l'IH\'ES},
     pages = {37--52},
     volume = {34},
     year = {1968},
     doi = {10.1007/bf02684589},
}

@article{Hoy21,
     author = {Marc Hoyois},
     title = {Cdh descent in equivariant homotopy $K$-theory},
     journal = {Documenta Mathematica},
     pages = {457–482},
     volume = {25},
     year = {2020},
     doi = {10.4171/DM/754},
}

@article{Low25,
title = {Equivariant localizing invariants of simple varieties},
author = {Jakub L\"owit},
doi = {10.1093/imrn/rnag058},
journal = {International Mathematics Research Notices},
volume = {2026},
number = {7},
pages = {rnag058},
year = {2026},
}

\bigskip
\noindent Jakub Löwit, \newline
Institute of Science and Technology Austria (ISTA), \newline
Am Campus 1, \newline 
3400 Klosterneuburg, \newline
Austria \newline
\texttt{jakub.loewit@ist.ac.at}

\end{document}